\documentclass[11pt]{amsart}
\usepackage{cite}
\usepackage{amsmath}
\usepackage{graphicx,wrapfig}
\usepackage{amssymb,amsgen,color}
\usepackage{amsfonts}
\usepackage{hyperref}

\usepackage{epsfig,epstopdf,color,bm}

\numberwithin{equation}{section}
\newcommand{\ep}{\varepsilon}
\newcommand{\bx}{{\bf x} }
\newcommand{\p}{\partial}
\newcommand{\og}{\omega}
\newcommand{\Og}{\Omega}
\newcommand{\fl}[2]{\frac{#1}{#2}}
\newcommand{\be}{\begin{equation}}
\newcommand{\ee}{\end{equation}}
\newcommand{\nn}{\nonumber}

\def\({\left(}
\def\){\right)}
\def\<{\left\langle}
\def\>{\right\rangle}

 \newtheorem{theorem}{Theorem}[section]

\newtheorem{lemma}[theorem]{Lemma}
\newtheorem{remark}[theorem]{Remark}

\topmargin 0cm \oddsidemargin 0.66cm \evensidemargin 0.66cm
\textwidth 15.6cm \textheight 21cm

\begin{document}

\title[Numerical methods for the logarithmic Schr\"odinger
equation]{Regularized numerical methods for the logarithmic Schr\"odinger equation}
\author[W. Bao]{Weizhu Bao}
\address[W. Bao]{Department of Mathematics,
National University of Singapore, Singapore 119076}
\email{matbaowz@nus.edu.sg}
\urladdr{http://www.math.nus.edu.sg/~bao/}
\author[R. Carles]{R\'emi Carles}
\address[R. Carles]{Univ Rennes, CNRS\\ IRMAR - UMR 6625\\ F-35000
  Rennes\\  France}
\email{Remi.Carles@math.cnrs.fr}
\urladdr{http://carles.perso.math.cnrs.fr/}
\author[C. Su]{Chunmei Su}
\address[C. Su]{Department of Mathematics,
University of Innsbruck, Innsbruck 6020, Austria}
\email{sucm13@163.com}
\author[Q. Tang]{Qinglin Tang}
\address[Q. Tang]{School of Mathematics,
Sichuan University, Chengdu 610064, People's Republic of China}
\email{qinglin\_tang@scu.edu.cn}
\urladdr{http://math.scu.edu.cn/info/1013/3088.htm}
\thanks{This work was partially supported by the Ministry
of Education of Singapore grant
R-146-000-223-112 (MOE2015-T2-2-146) (W. Bao), and  by  the
Fundamental Research Funds for the Central Universities (Q. Tang).} 
\maketitle

\begin{abstract}
We present and analyze two numerical methods for the logarithmic
Schr\"odinger equation (LogSE) consisting of
 a regularized splitting method and a regularized conservative Crank-Nicolson finite
difference method (CNFD). In order to avoid numerical  blow-up and/or to suppress round-off error due to the logarithmic
nonlinearity in the LogSE, a regularized logarithmic Schr\"odinger equation (RLogSE)
with a small regularized parameter $0<\ep\ll1$ is adopted to
approximate the LogSE with linear convergence rate  $O(\ep)$.
Then we use the Lie-Trotter splitting integrator
to solve the RLogSE and establish its error bound  $O(\tau^{1/2}\ln(\ep^{-1}))$ with $\tau>0$ the time step,
which implies an error bound at $O(\ep+\tau^{1/2}\ln(\ep^{-1}))$
for the LogSE by the Lie-Trotter splitting method.
In addition, the CNFD is also applied to discretize the RLogSE,
which conserves the mass and energy in the discretized level.
Numerical results are reported to confirm our error bounds
and to demonstrate rich and complicated dynamics of the LogSE.
\end{abstract}

\section{Introduction}
We consider the logarithmic Schr\"odinger equation (LogSE) which was
originally introduced as a model of nonlinear wave mechanics (cf. \cite{BiMy76})
\be\label{LSE}
\left\{
\begin{aligned}
&i\p_t u(\bx,t)+\Delta u(\bx,t)=\lambda u(\bx,t)\,\ln(|u(\bx,t)|^2),\quad \bx\in \Omega, \quad t>0,\\
& u(\bx,0)=u_0(\bx),\quad \bx\in \Omega,
\end{aligned}
\right.
\ee
where $t$ is time, $\bx=(x_1,\ldots,x_d)^T\in\mathbb{R}^d$ ($d=1,2,3$)
is the spatial coordinate, $u:=u(\bx,t)\in\mathbb{C}$ is the dimensionless wave function, $\lambda\in
\mathbb{R}\backslash\{0\}$ is a dimensionless real constant of
the nonlinear interaction strength, and
$\Omega=\mathbb{R}^d$ or $\Omega\subset\mathbb{R}^d$ is a bounded
domain with homogeneous Dirichlet boundary condition or periodic
boundary condition posted on the boundary. The LogSE now arises from
different applications, such as  quantum mechanics \cite{yasue},
quantum optics \cite{BiMy76,BiMy79,hansson,KEB00,buljan}, nuclear
physics \cite{Hef85,HeRe80}, Bohmian mechanics \cite{DFGL03}, effective quantum gravity\cite{Zlo10}, theory of
superfluidity and  Bose-Einstein condensation \cite{BEC}.
\smallbreak

We emphasize that the nonlinearity $z\mapsto z\ln|z|^2$ is not locally
Lipschitz continuous due to the singularity of the logarithm at the origin, and therefore even the well-posedness of the Cauchy problem for \eqref{LSE} is not completely obvious. We
refer to \cite{CaHa80, CaGa-p,GLN10} for a study of
the Cauchy problem  by compactness methods in a suitable functional
framework (which depends
on the sign of $\lambda$ based on a priori estimate), and to \cite{Hayashi-p} for
an alternative proof relying on
the strong convergence of
suitable approximate solutions when $\lambda<0$.
\smallbreak

Similar to the more usual cubic nonlinear Schr\"odinger equation, the LogSE
conserves the {\sl mass}, {\sl momentum} and {\sl energy}
\cite{CazCourant}, which are defined, respectively, as follows:
\begin{align}
&M(u(\cdot,t)):=\|u(\cdot,t)\|^2_{L^2(\Omega)}\equiv
  \|u_0\|^2_{L^2(\Omega)}=M(u_0),\label{mass}\\
& P(u(\cdot,t)):=\mathrm{Im} \int_\Omega \overline{u}(\bx,t)\nabla
  u(\bx,t)d\bx\equiv \mathrm{Im} \int_\Omega \overline {u_0}(\bx)\nabla
  u_0(\bx)d\bx=P(u_0),\quad t\ge0,\label{mom}\\
&E(u(\cdot,t)):=\|\nabla u(t)\|^2_{L^2(\Omega)}+\lambda \int_{\Omega}|u(\bx,t)|^2\ln(|u(\bx,t)|^2)d\bx\nonumber\\
&\qquad\quad\ \ \,\,\, \equiv\|\nabla u_0\|^2_{L^2(\Omega)}+\lambda \int_{\Omega}|u_0(\bx)|^2\ln(|u_0(\bx)|^2)d\bx=E(u_0). \label{energy}
\end{align}
At this stage, the only difference with the cubic nonlinear Schr\"odinger
equation is  the expression of the energy.
We emphasize that the second term in the energy, referred to as {\sl
  interaction energy}, has no definite sign, since
\[
\int_{|u|>1} |u(\bx,t)|^2\ln(|u(\bx,t)|^2)d\bx\ge 0,\quad
\text{while}\quad \int_{|u|<1} |u(\bx,t)|^2\ln(|u(\bx,t)|^2)d\bx\le 0.
\]
Therefore, it is not obvious to guess which sign of $\lambda$ leads to
which type of dynamics. In the case $\lambda<0$, no solution is
dispersive, as proven in \cite{cazenave1983}. This is reminiscent of
the \emph{focusing} nonlinear Schr\"odinger equation, where, however,
small solutions are always dispersive. In addition, unlike what
happens in the case of the  nonlinear Schr\"odinger equation, every
solution is global in time: there is no such thing as finite time blow
up for LogSE. 
In the case $\lambda<0$, stationary
solutions are available, called \emph{Gaussons} (as noticed in \cite{BiMy79}; see below), which
turn out to be orbitally stable, but not
stable in the usual sense of Lyapunov
\cite{cazenave1983,cazenave1982} (see also \cite{Ar16} for another
proof, and  \cite{ squassina2015,
  tanaka2017} for other particular solutions). In the case
$\lambda>0$, every solution is (global and) dispersive with a non-standard rate (a
logarithmic perturbation of the Schr\"odinger rate), and after a
time-dependent
rescaling related to this dispersion, the large time behavior of the
renormalized solution exhibits a universal Gaussian profile --- a
phenomenon which is rather unique in the context of Hamiltonian
dispersive PDEs, see \cite{CaGa-p}.
\smallbreak

Note that unlike the more standard nonlinear Schr\"odinger equation,
 a large set
of explicit solutions is associated to \eqref{LSE} in the case
$\Omega=\mathbb R^d$. This important feature was
noticed already from the introduction of this model \cite{BiMy76}: if $u_0$ is
Gaussian, $u(\cdot,t)$ is Gaussian for all time, and solving
\eqref{LSE} amounts to solving ordinary differential equations.
For the convenience of the reader,  we briefly recall the formulas
given in \cite{BCST}.
In the one-dimensional case, if
\be
\label{ini-gaus}
  u_0(x) = b_0 e^{-\fl{a_0}{2} x^2+ivx}, \qquad x\in \mathbb{R},
\ee
with $a_0,b_0\in \mathbb C$ satisfying $\alpha_0:=\mathrm{Re}\, a_0>0$ and $v\in\mathbb{R}$ being constants,
then the solution of \eqref{LSE} is given by \cite{Ar16,BCST,CaGa-p}
\be\label{Gaus}
  u(x,t) = \frac{b_0}{\sqrt{r(t)}}e^{i(vx-v^2t)+Y(x-2vt,t)},\qquad x\in{\mathbb R}, \quad t\ge0,
\ee
where
\be
Y(x,t)=-i\phi(t)-\alpha_0
  \frac{x^2}{2r(t)^2}+i\frac{\dot r(t)}{r(t)}\frac{x^2}{4},
  \qquad x\in{\mathbb R}, \quad t\ge0,
\ee
with $\phi:=\phi(t)$ and $r:=r(t)>0$ being the solutions of the ODEs
\begin{align}
\label{phi}
&  \dot \phi= \frac{\alpha_0}{r^2} +\lambda \ln|b_0|^2-\lambda \ln
  r,\quad \phi(0)=0,\\
  \label{r}
& \ddot r = \frac{4\alpha_0^2}{r^3}+\frac{4\lambda \alpha_0}{r},\quad
  r(0)=1,\quad \dot{r}(0)= -2\,\mathrm{Im}\,a_0.
\end{align}
In the multi-dimensional case, one can actually tensorize such
one-dimensional solutions due to the property $\ln |ab|=\ln|a|+\ln|b|$.
In the case $\lambda<0$, if $a_0=-\lambda>0$, then
$r(t)\equiv 1$, which generates a {\sl moving Gausson} when $v\not =0$
and a {\sl static Gausson} when $v=0$ of the LogSE; and
 if $0<a_0\ne-\lambda$,
the function $r$ is  (time)
periodic (in agreement with the absence of dispersive
effects), which generates a {\sl breather} of the LogSE \cite{BCST}.   In the case $\lambda>0$, the
large time behavior of $r$ does not depend on its initial data,
$r(t)\sim 2t\sqrt{\lambda \alpha_0 \ln t}$ as $t\to \infty$ (see
\cite{CaGa-p}). The general dynamics is
rather well understood in the case $\lambda>0$ (see \cite{CaGa-p}), but,
aside from the
explicit Gaussian solutions, the dynamical properties in the case
$\lambda <0$ constitute a vast open problem: is the dynamics
comparable to the one, say, of the cubic nonlinear Schr\"odinger
equation, or is it drastically different? The numerical simulations
presented in this paper tend to suggest that the dynamics associated
to \eqref{LSE} is quite rich, and reveals phenomena absent (or at least
unknown) in the case of the cubic nonlinear Schr\"odinger equation.
\smallbreak

There is a long list of references on numerical approaches for solving
the nonlinear Schr\"odinger (NLS) equation with power-like
nonlinearity
\begin{equation}\label{NLSE1}
  i\p_t u(\bx,t)+\Delta u(\bx,t)=\lambda |u(\bx,t)|^{2\sigma}u(\bx,t),\quad \bx\in \mathbb{R}^d, \quad t>0
\end{equation}
with $\sigma\in \mathbb N$, or with nonlocal Hartree-type nonlinearity
\begin{equation}\label{NLSE2}
    i\p_t u(\bx,t)+\Delta u(\bx,t)=\lambda \left(|\bx|^{-\gamma}\ast
      |u(\bx,t)|^2\right)u(\bx,t),\quad \bx\in \mathbb{R}^d, \quad t>0
\end{equation}
with $\gamma>0$, such as finite
difference method \cite{ABB2013,akrivis1993, chang1999},  finite element
method \cite{akrivis, karakashian},  relaxation method
\cite{besse2004},  and  time-splitting
pseudospectral method \cite{ABB2013, bao2002, bao2003, robinson1993, taha1984,
  markowich1999, thalhammer2009}, etc.
For the error analysis of the
time-splitting or split-step method for the NLS equation, we refer to
\cite{besse, debussche2009, lubich2008}
and the references therein; for the error estimates of the finite difference method, we refer to \cite{guo1986, you1983, bao2013}; and for the error bound of the finite element method, we refer to \cite{akrivis, karakashian, henning2017}.
However, few numerical methods have been proposed and/or analyzed for the LogSE due to the singularity at the origin of the logarithmic nonlinearity.
\smallbreak

In order to avoid (numerical) blow-up and/or to suppress round-off error due to the logarithmic nonlinearity in the LogSE, a regularized logarithmic Schr\"odinger equation (RLogSE)
with a small regularized parameter $0<\ep\ll1$ was
introduced  in \cite{BCST} as
\be\label{RLSE}
\left\{
\begin{aligned}
&i\p_t u^\ep(\bx,t)+\Delta u^\ep(\bx,t)=\lambda u^\ep(\bx,t)\,\ln(\ep+|u^\ep(\bx,t)|)^2,\quad \bx\in \Omega, \quad t>0,\\
&u^\ep(\bx,0)=u_0(\bx),\quad \bx\in \Omega.
\end{aligned}
\right.
\ee
For any fixed $0<\ep\ll1$, the nonlinearity is now locally Lipschitz
continuous (the singularity at the origin disappears).
Remarkably enough, \eqref{RLSE} enjoys similar conservations as the
original model \eqref{LSE}, i.e., the mass \eqref{mass} and momentum \eqref{mom} as
well as the regularized energy defined as
\begin{align}\label{energyr}
&  E^\ep\left(u^\ep(\cdot,t)\right) = \int_{\Omega}\big[|\nabla u^\ep(\bx,t)|^2+2\lambda
  \ep |u^\ep(\bx,t)|+\lambda
  |u^\ep(\bx,t)|^2\ln(\ep+|u^\ep(\bx,t)|)^2\big.\nonumber\\
&\qquad\qquad\qquad\qquad\big.-\lambda \ep^2
\ln\left(1+|u^\ep(\bx,t)|/\ep\right)^2\big]d\bx\equiv E^\ep (u_0),\quad t\ge0.
\end{align}
The regularized model \eqref{RLSE} was proven to approximate
the LogSE \eqref{LSE} linearly in $\ep$ for bounded $\Omega$, i.e.,
\[
\sup_{t\in [0,T]}\|u^\ep(t) -u(t)\|_{L^2(\Omega)}=O(\ep),\quad \forall\ 
T>0,
\]
 and with an error
$O(\ep^{\fl{4}{4+d}})$ in the case of the whole space $\Omega=\mathbb
R^d$, provided that the first two momenta of $u_0$ belong to
$L^2(\mathbb R^d)$
\cite{BCST}. In addition, $E^\ep(u^\ep)=E(u) +O(\ep)$.
Therefore, it is sensible to analyze various numerical methods
associated to \eqref{RLSE}, provided that we have as precise as
possible a control of the dependence of the various constants upon
$\ep$. Then by using the triangle inequality, we can obtain error estimates
of different numerical methods for the LogSE \eqref{LSE}.


Very recently, a semi-implicit finite
difference method was proposed and analyzed for
\eqref{RLSE} and thus for \eqref{LSE} \cite{BCST}.
As we know, there are many efficient and accurate numerical methods
for the nonlinear Schr\"{o}dinger equation \eqref{NLSE1}
such as the  time-splitting spectral method
\cite{bao2002,bao2003,carles2013,carles2017,besse, gauckler, ignat, lubich2008, faou2011} and the conservative Crank-Nicolson
finite difference (CNFD) method \cite{bao2013,BTX13}.
The main aim of this paper is to present and analyze
the  time-splitting spectral method
 and  the CNFD method \eqref{RLSE} and thus for \eqref{LSE}.
We can establish  rigorous
error estimates for the Lie-Trotter splitting
under a much weaker constraint on the time step $\tau$,
$\tau\lesssim 1/(\ln(\ep^{-1}))^2$, instead of
$\tau\lesssim
\sqrt\ep e^{-CT|\ln(\ep)|^2}$ for some $C$ independent of $\ep$,
which is needed for the semi-implicit finite difference method
\cite{BCST}.

The rest of the paper is organized as follows. In Section~\ref{sec:lie}, we
propose a regularized Lie-Trotter splitting method and establish
rigorous error estimates. In Section~\ref{sec:CNFD}, a conservative
finite difference method is adapted to
the RLogSE. Numerical tests are reported for both
methods in term of accuracy under different regularities of the initial data. 
In addition, the splitting method is applied to study the long
time dynamics in Section~\ref{sec:long-time} and some very interesting and
complicated dynamical phenomena are presented to demonstrate
the rich dynamics of the Logarithmic Schr\"odinger equation including
interactions of Gaussons. Throughout the paper, $C$
represents a generic constant independent of $\ep$, $\tau$ and the
function $u$, and $C(c)$ means that $C$ depends on $c$.

\section{A regularized Lie-Trotter splitting method}
\label{sec:lie}
In this section, we study the approximation property of a semi-discretization in time
for solving the regularized model \eqref{RLSE}
 for $d=1, 2, 3$.
The numerical integrator we consider is a Lie-Trotter
splitting method \cite{mclachlan, descombes, gauckler}.  For simplicity of notation, we set
$\lambda=1$.

\subsection{Lie-Trotter splitting method} The operator splitting methods for the time integration of \eqref{RLSE} are based on the splitting
\[\p_t u^\ep=A(u^\ep)+B(u^\ep),\]
where
\[A(u^\ep)=i\Delta u^\ep,\quad B(u^\ep)=-i \varphi^\ep(u^\ep),\quad
  \varphi^\ep(z)=z\ln(\ep+|z|)^2,\]
and the solution of the subproblems
\be\label{lp}
\left\{
\begin{aligned}
&i\p_t v(\bx,t)=-\Delta v(\bx,t),\quad \bx\in\Omega,\quad t>0,\\
&v(\bx,0)=v_0(\bx),
\end{aligned}\right.
\ee
\be\label{nlp}
\left\{
\begin{aligned}
&i\p_t \og(\bx,t)=\varphi^\ep(\og(\bx,t)),\quad \bx\in\Omega,\quad t>0,\\
&\og(\bx,0)=\og_0(\bx),
\end{aligned}\right.
\ee
where $\Omega=\mathbb{R}^d$ or $\Omega\subset \mathbb{R}^d$ is a bounded domain with homogeneous Dirichlet or periodic boundary condition on the boundary.
The associated evolution operators are given by
\be\label{ABs}
v(\cdot,t)=\Phi_A^t(v_0)=e^{it\Delta}v_0,\quad
\og(\cdot,t)=\Phi_B^t(\og_0)=\og_0e^{-it\ln(\ep+|\og_0|)^2},\quad t\ge0.
\ee
Regarding the exact flow $\Phi_A^t$ and $\Phi_B^t$, we have the following properties.
\begin{lemma}
For \eqref{lp}, we have the isometry relation
\be\label{Ap}
\|\Phi_A^t(v_0)\|_{H^s(\Omega)}=\|v_0\|_{H^s(\Omega)},\quad s\in \mathbb{N},\quad t\ge 0.
\ee
For \eqref{nlp}, if $\og_0\in H^2(\Omega)$, then
\be\label{Bp}
\begin{split}
&\|\Phi_B^t(\og_0)\|_{L^2(\Omega)}=\|\og_0\|_{L^2(\Omega)},\quad
\|\nabla \Phi_B^t(\og_0)\|_{L^2(\Omega)}\le (1+2t)\|\nabla \og_0\|_{L^2(\Omega)},\\
&\|\Phi_B^t(\og_0)\|_{H^2(\Omega)}\le C(\|\og_0\|_{H^2(\Omega)})(1+t+t^2)/\ep.
\end{split}
\ee
\end{lemma}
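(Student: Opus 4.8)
The plan is to treat the three assertions in turn, all by direct computation using the explicit formula $\Phi_B^t(\omega_0) = \omega_0 e^{-it\ln(\ep+|\omega_0|)^2}$. The $L^2$-isometry is immediate: the multiplier $e^{-it\ln(\ep+|\omega_0|)^2}$ has modulus one pointwise in $\bx$ (the exponent is real), so $|\Phi_B^t(\omega_0)(\bx)| = |\omega_0(\bx)|$ for a.e.\ $\bx$, and integrating gives the first identity in \eqref{Bp}. (The isometry \eqref{Ap} for $\Phi_A^t$ is the standard fact that $e^{it\Delta}$ is unitary on each $H^s$, seen on the Fourier side; on a bounded domain with Dirichlet or periodic conditions one uses the eigenfunction expansion instead.)

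For the gradient bound, I would differentiate the product: writing $g(\bx) = -t\ln(\ep+|\omega_0(\bx)|)^2$, we have
\[
\nabla \Phi_B^t(\omega_0) = e^{ig}\nabla\omega_0 + i\,\omega_0 e^{ig}\,\nabla g,
\qquad \nabla g = -\frac{2t}{\ep+|\omega_0|}\nabla|\omega_0|.
\]
Since $|e^{ig}| = 1$ and $\big|\omega_0\,\nabla|\omega_0|\big| \le |\omega_0|\,|\nabla\omega_0| \le (\ep+|\omega_0|)|\nabla\omega_0|$ pointwise (using $\big|\nabla|\omega_0|\big| \le |\nabla\omega_0|$, valid a.e.), the second term is bounded pointwise by $2t|\nabla\omega_0|$. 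Hence $|\nabla\Phi_B^t(\omega_0)| \le (1+2t)|\nabla\omega_0|$ pointwise, and taking $L^2$ norms yields the stated estimate. The only mild technical point here is justifying the a.e.\ differentiation of $|\omega_0|$ and the chain rule for $\ln(\ep+|\omega_0|)$ — this is fine because $s\mapsto\ln(\ep+s)$ is smooth on $[0,\infty)$ for fixed $\ep>0$, so the nonlinearity $z\mapsto\varphi^\ep(z)$ is Lipschitz on bounded sets and $C^1$ away from $0$, and the composition lands in $H^1$ by the usual Stampacchia-type argument.

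For the $H^2$ bound I would differentiate once more. The second derivatives of $\Phi_B^t(\omega_0)$ produce: a term $e^{ig}\nabla^2\omega_0$ (controlled by $\|\omega_0\|_{H^2}$); cross terms of the form $e^{ig}\nabla\omega_0\otimes\nabla g$ and $\omega_0 e^{ig}\nabla^2 g$; and a term $\omega_0 e^{ig}(\nabla g\otimes\nabla g)$. The factor $\nabla g$ carries one power of $t$ and a factor $(\ep+|\omega_0|)^{-1}$, while $\nabla^2 g$ carries one power of $t$ together with factors $(\ep+|\omega_0|)^{-1}$ and $(\ep+|\omega_0|)^{-2}$ hitting $\nabla|\omega_0|\otimes\nabla|\omega_0|$ and $\nabla^2|\omega_0|$. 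After multiplying by $\omega_0$ one recovers at worst $(\ep+|\omega_0|)^{-1}$, i.e.\ a loss of a single power of $\ep$; the quadratic term $\omega_0(\nabla g)^{\otimes 2}$ similarly gives $t^2(\ep+|\omega_0|)^{-1}|\nabla\omega_0|^2$. To turn the pointwise bounds into an $L^2$ bound on $\nabla^2\Phi_B^t(\omega_0)$ I would use $|\nabla\omega_0|^2 \in L^2$, which follows from $\|\omega_0\|_{H^2}$ via the Gagliardo--Nirenberg inequality $\|\nabla\omega_0\|_{L^4}^2 \lesssim \|\omega_0\|_{L^\infty}\|\nabla^2\omega_0\|_{L^2}$ (together with $H^2\hookrightarrow L^\infty$ in dimensions $d\le 3$), and the bound $|\omega_0|\,|\nabla^2|\omega_0|| \le (\ep+|\omega_0|)|\nabla^2\omega_0| + |\nabla\omega_0|^2$ to absorb the second-order term on $|\omega_0|$. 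Collecting all contributions and retaining the worst powers of $t$ and $\ep$ gives $\|\Phi_B^t(\omega_0)\|_{H^2} \le C(\|\omega_0\|_{H^2})(1+t+t^2)/\ep$.

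The main obstacle is the $H^2$ estimate: one must carefully track how many powers of $(\ep+|\omega_0|)^{-1}$ survive after the outer multiplication by $\omega_0$, since a careless bound would produce $\ep^{-2}$ rather than the sharp $\ep^{-1}$, and one must handle the cubic-looking term $\omega_0(\nabla g)^{\otimes 2}$ where two factors of $(\ep+|\omega_0|)^{-1}$ meet only one factor of $\omega_0$ — here again $|\omega_0|\le\ep+|\omega_0|$ kills one factor, and the surviving $(\ep+|\omega_0|)^{-1}|\nabla\omega_0|^2$ is put in $L^2$ by Gagliardo--Nirenberg as above. Everything else is bookkeeping with the product and chain rules.
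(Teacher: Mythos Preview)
Your proposal is correct and follows essentially the same route as the paper: direct differentiation of $\Phi_B^t(\omega_0)=\omega_0 e^{-it\ln(\ep+|\omega_0|)^2}$, the pointwise bound $|\nabla|\omega_0||\le|\nabla\omega_0|$ for the gradient estimate, and for the $H^2$ bound the same cancellation mechanism (the prefactor $\omega_0$ kills one factor of $(\ep+|\omega_0|)^{-1}$, or equivalently of $|\omega_0|^{-1}$ coming from $\nabla^2|\omega_0|$) followed by control of $|\nabla\omega_0|^2$ in $L^2$. The only cosmetic difference is that the paper invokes the embedding $H^2(\Omega)\hookrightarrow W^{1,4}(\Omega)$ for $d\le 3$ directly, whereas you reach the same conclusion via Gagliardo--Nirenberg plus $H^2\hookrightarrow L^\infty$; these are equivalent here.
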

\emph{Proof.} By direct calculation, we get
\[\nabla \Phi_B^t(\omega_0)=e^{-it\ln(\ep+|\omega_0|)^2}\left[\nabla
\omega_0-\fl{2it\omega_0}{\ep+|\omega_0|}\nabla |\omega_0|\right], \,\,\, \mathrm{with} \,\,\, \nabla |\omega_0|=\fl{\omega_0\nabla
\overline{\omega_0}+\overline{\omega_0}\nabla \omega_0}{2|\omega_0|},\]
which gives immediately, since $\left\lvert \nabla|\omega_0|\right\rvert\le |\nabla \omega_0|$,
\[\|\nabla \Phi_B^t(\og_0)\|_{L^2(\Omega)}\le (1+2t)\|\nabla \og_0\|_{L^2(\Omega)}.\]
Noticing that
\begin{align*}
\fl{\p^2\Phi_B^t(\og_0)}{\p x_k \p x_j}&=\fl{\p^2\og_0}{\p x_k \p x_j}-\fl{2it}{\ep+|\og_0|}\left(\fl{\p\og_0}{\p x_k}\fl{\p |\og_0|}{\p x_j} +\fl{\p\og_0}{\p x_j}\fl{\p |\og_0|}{\p x_k}+\og_0\fl{\p^2|\og_0|}{\p x_k \p x_j}\right)\\
&\quad+\fl{2it-4t^2}{(\ep+|\og_0|)^2} \fl{\p |\og_0|}{\p x_j}\fl{\p |\og_0|}{\p x_k}\og_0,
\end{align*}
where
\begin{align*}
\left|\fl{\p^2 |\og_0|}{\p x_k\p x_j}\right| &=\left|\fl{1}{|\og_0|}\mathrm{Re}\left(\og_0\fl{\p^2\overline{\og_0}}{\p x_k\p x_j}+\fl{\p \og_0}{\p x_j}\fl {\p\overline{\og_0}}{\p x_k}\right)-\fl{1}{|\og_0|^2}\fl{\p |\og_0|}{\p x_k}\mathrm{Re}\left(\og_0 \fl{\p \overline{\og_0}}{\p x_j}\right)\right|\\
&\le \left|\fl{\p^2\og_0}{\p x_k\p x_j}\right|+\fl{2}{|\og_0|}\left|\fl{\p\og_0}{\p x_j}\right|\left|\fl{\p\og_0}{\p x_k}\right|,
\end{align*}
this yields that
\begin{align*}
\left|\fl{\p^2\Phi_B^t(\og_0)}{\p x_k \p x_j}\right|
&\le (1+2t)\left|\fl{\p^2\og_0}{\p x_k \p x_j}\right|+\fl{10t+4t^2}{\ep+|\og_0|}\left|\fl{\p\og_0}{\p x_j}\right|\left|\fl{\p\og_0}{\p x_k}\right|\\
&\le (1+2t)\left|\fl{\p^2\og_0}{\p x_k \p x_j}\right|+\fl{5t+2t^2}{\ep}\left(\left|\fl{\p\og_0}{\p x_j}\right|^2+\left|\fl{\p\og_0}{\p x_k}\right|^2\right).
\end{align*}
Thus
\begin{align*}
\left\|\fl{\p^2\Phi_B^t(\og_0)}{\p x_k \p x_j}\right\|_{L^2(\Omega)}
&\le (1+2t)\left\|\fl{\p^2\og_0}{\p x_k \p x_j}\right\|_{L^2(\Omega)}+\fl{5t+2t^2}{\ep}\left(\left\|\fl{\p\og_0}{\p x_j}\right\|^2_{L^4(\Omega)}+\left\|\fl{\p\og_0}{\p x_k}\right\|^2_{L^4(\Omega)}\right),
\end{align*}
which completes the proof by recalling that
$H^2(\Omega)\hookrightarrow W^{1,4} (\Omega)$, since $d\le 3$.
\hfill $\square$ \bigskip

We consider the Lie-Trotter splitting
\be\label{LT}
u^{\ep, n+1}=\Phi^\tau(u^{\ep, n})=\Phi_A^\tau(\Phi_B^\tau(u^{\ep, n})),\quad u^{\ep,0}=u_0,
\ee
for a time step $\tau>0$. Thus for $u^{\ep,n}\in H^1(\Omega)$, it follows from the isometry property that the splitting method conserves the mass
\[\|u^{\ep,n+1}\|_{L^2(\Omega)}=\|u^{\ep,n}\|_{L^2(\Omega)}\equiv \|u^{\ep,0}\|_{L^2(\Omega)}= \|u_0\|_{L^2(\Omega)} ,\quad n\ge0,\]
and furthermore \eqref{Bp} gives $u^{\ep,n}\in H^1(\Omega)$ with
\be\label{unp}
\|u^{\ep,n+1}\|_{H^1(\Omega)}\le (1+2\tau)\| u^{\ep,n}\|_{H^1(\Omega)}\le e^{2n\tau}\| u_0\|_{H^1(\Omega)}, \quad n\ge0.
\ee

\subsection{Error estimates}
In this section, we carry out the error analysis of the Lie-Trotter splitting \eqref{LT}.
\begin{theorem}\label{thmlt}
Let $T>0$. Assume that the solution of \eqref{RLSE} satisfies
$u^\ep\in  L^\infty(0,T;H^1(\Omega))$ for $d=1$ or
$u^\ep\in L^\infty(0,T;H^2(\Omega))$  for $d=2, 3$. There exists
$\ep_0>0$ depending on $\|u^\ep\|_{L^\infty(0,T; H^1(\Omega))}$ for $d=1$,
and $\|u^\ep\|_{L^\infty(0,T; H^2(\Omega))}$ for $d=2, 3$, such that
when $\ep\le \ep_0$ and $n\tau\le T$, we have
\[\|u^{\ep,n}-u^\ep(t_n)\|_{L^2(\Omega)}\le C\left(T,
  \|u^\ep\|_{L^\infty([0,T];H^1(\Omega))}\right)\ln(\ep^{-1})\tau^{1/2},\]
where $C(\cdot,\cdot)$ is independent of $\ep>0$.
\end{theorem}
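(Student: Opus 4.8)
The plan is to use the standard Lady Windermere's fan argument for splitting methods, comparing the numerical flow $\Phi^\tau$ with the exact flow $\Phi^\tau_{\rm RLogSE}$ of \eqref{RLSE} and telescoping the error over the $n$ steps. Writing $e^n = u^{\ep,n} - u^\ep(t_n)$, I would decompose
\[
e^{n+1} = \bigl(\Phi^\tau(u^{\ep,n}) - \Phi^\tau(u^\ep(t_n))\bigr) + \bigl(\Phi^\tau(u^\ep(t_n)) - \Phi^\tau_{\rm RLogSE}(u^\ep(t_n))\bigr),
\]
so that controlling the accumulated error requires two ingredients: (i) a \emph{stability} estimate showing $\Phi^\tau$ is Lipschitz on $L^2$ with constant $1 + C\ln(\ep^{-1})\tau$ on the relevant ball of $H^1$ (or $H^2$) functions, and (ii) a \emph{local error} estimate $\|\Phi^\tau(u^\ep(t_n)) - \Phi^\tau_{\rm RLogSE}(u^\ep(t_n))\|_{L^2} \le C\ln(\ep^{-1})\tau^{3/2}$. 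Summing $n \le T/\tau$ local errors of size $\tau^{3/2}$ amplified by the Gronwall factor $(1 + C\ln(\ep^{-1})\tau)^n \le e^{CT\ln(\ep^{-1})}$ would, however, only give $\tau^{1/2}$ times an $\ep$-power blow-up, so I must be more careful: the key point is that the stability constant is $1 + C\tau$ (not $1 + C\ln(\ep^{-1})\tau$) in $L^2$ because $\Phi^\tau_A$ is an $L^2$-isometry and $\Phi^\tau_B$ is an $L^2$-isometry as well (see \eqref{Ap}, \eqref{Bp}); the logarithmic loss only enters the \emph{local} error, and the Gronwall factor is then $\ep$-independent.

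For the local error I would use the variation-of-constants / Duhamel comparison between the exact Lie splitting and the exact flow. The classical local error formula for Lie--Trotter gives, to leading order, an integral involving the commutator $[A,B]$ applied along the flow; concretely,
\[
\Phi^\tau(w) - \Phi^\tau_{\rm RLogSE}(w) = \int_0^\tau \!\!\int_0^{s} e^{i(\tau-s)\Delta}\,\partial_w\bigl(B\circ \Phi^{s_1}_{\rm flow}\bigr)\bigl[(A+B - A)(\cdots)\bigr]\,ds_1\,ds,
\]
and the upshot is that the local error is bounded by $\tau^2$ times a quantity measuring the regularity of $\varphi^\ep$ composed with $u^\ep$. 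The factor $\ln(\ep^{-1})$ arises precisely here: since $\varphi^\ep(z) = z\ln(\ep+|z|)^2$, we have $|\varphi^\ep(z)| \lesssim |z|\,|\ln(\ep+|z|)^2| \lesssim |z|(\ln(\ep^{-1}) + \ln(1+|z|))$ on the set $|z| \le M$, and similarly the relevant derivative bounds for $\Delta \varphi^\ep(u^\ep)$ or $\varphi^\ep(\Delta u^\ep)$-type terms pick up one power of $\ln(\ep^{-1})$. One subtlety is that $\varphi^\ep$ is not $C^2$ at the origin and its first derivative behaves like $\ln(\ep+|z|)^2$, so I expect only a \emph{half}-order gain rather than a full order: the honest local error estimate is $\|\Phi^\tau(w) - \Phi^\tau_{\rm RLogSE}(w)\|_{L^2} \le C\ln(\ep^{-1})\,\tau^{3/2}\,\|w\|_{H^1}$ (in $d=1$) because bounding the double time integral requires trading one time integration against an $L^2$-in-time bound on $\nabla u^\ep$ or using the $H^1$-growth \eqref{unp}, $\|\nabla\Phi^t_B\|_{L^2} \le (1+2t)\|\nabla\omega_0\|_{L^2}$, rather than an $H^2$ bound. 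Summing $T/\tau$ such local errors with the $\ep$-independent Gronwall factor yields the claimed $C\ln(\ep^{-1})\tau^{1/2}$.

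The main obstacle, and the step I would spend the most care on, is the low-regularity local error analysis: because the nonlinearity is only Lipschitz (not $C^\infty$) and its derivatives carry logarithmic singularities, I cannot invoke the standard smooth-nonlinearity splitting theory of Lubich or Besse--Bidégaray--Descombes. I would instead estimate directly, writing $\Phi^\tau(w) - \Phi^\tau_{\rm RLogSE}(w)$ via Duhamel for the difference and splitting into a term where the linear propagators $e^{it\Delta}$ act (using \eqref{Ap} and the smoothing/commutator structure) and a term measuring the discrepancy in the nonlinear substep, then bounding $\|\varphi^\ep(f) - \varphi^\ep(g)\|_{L^2}$ by a log-Lipschitz inequality of the form $\|\varphi^\ep(f) - \varphi^\ep(g)\|_{L^2} \le C(\ln(\ep^{-1}) + |\ln\|f-g\|_{L^2}|)\|f-g\|_{L^2}$ — exactly the kind of estimate used for uniqueness in the LogSE Cauchy theory. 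Reconciling such a log-Lipschitz bound with a clean power-of-$\tau$ rate forces the half-order loss and the $\ln(\ep^{-1})$ prefactor, and this is where all the real work lies; the stability step, the $H^1$/$H^2$ a priori bounds \eqref{unp}, and the final discrete Gronwall summation are then routine. For $d = 2,3$ the Sobolev embedding $H^2 \hookrightarrow L^\infty$ is needed to control $\|u^\ep\|_{L^\infty}$ and hence the $\ln(1+|u^\ep|)$ contributions, which is why the hypothesis is upgraded to $H^2$ there, whereas $H^1 \hookrightarrow L^\infty$ already suffices in $d=1$.
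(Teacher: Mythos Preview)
Your overall architecture---Lady Windermere's fan with $\ep$-independent $L^2$-stability of $\Phi^\tau$ plus a local error $C\ln(\ep^{-1})\tau^{3/2}$, and the dimensional split via $H^1\hookrightarrow L^\infty$ versus $H^2\hookrightarrow L^\infty$---matches the paper. But two of the mechanisms you invoke would not close as stated. First, that $\Phi_B^\tau$ preserves the $L^2$-norm is \emph{not} the same as $\Phi_B^\tau$ being $L^2$-Lipschitz with constant close to $1$: a nonlinear norm-preserving map can have arbitrarily large Lipschitz constant. The paper (Lemma~\ref{stab}) instead proves the pointwise bound $|\Phi_B^\tau(f)(\bx)-\Phi_B^\tau(g)(\bx)|\le(1+2\tau)|f(\bx)-g(\bx)|$ by a direct calculation using $|\sin\theta|\le|\theta|$ and $\ln(1+s)\le s$; this elementary but essential computation is where the $\ep$-independence of the stability constant actually lives.

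Second, for the local error, the log-Lipschitz inequality you propose (with a $|\ln\|f-g\|_{L^2}|$ factor) is a red herring: after regularization, $\varphi^\ep$ is honestly Lipschitz on $\{|z|\le C/\ep\}$ with constant $C\ln(\ep^{-1})$ (see \eqref{vp}), so no Osgood-type argument is needed, and the half-order loss does \emph{not} come from any log-Lipschitz effect. It comes from the deliberate choice $\|(I-e^{it\Delta})f\|_{L^2}\le\sqrt{2t}\,\|f\|_{H^1}$ instead of $t\|f\|_{H^2}$ (see \eqref{ld}). The paper runs an energy estimate directly on the error equation $i\partial_t\mathcal{E}+\Delta\mathcal{E}=\varphi^\ep(\Psi^t u_0)-\Phi_A^t\varphi^\ep(\Phi_B^t u_0)$, uses the Cazenave-type bound of Lemma~\ref{pre}, namely $|\mathrm{Im}((\varphi^\ep(z_1)-\varphi^\ep(z_2))(\overline{z_1}-\overline{z_2}))|\le 2|z_1-z_2|^2$, to absorb $\varphi^\ep(\Psi^t u_0)-\varphi^\ep(\Phi^t u_0)$ with an $\ep$-free constant, and then controls the remaining consistency term by two applications of the $\sqrt{t}$-estimate together with the $C\ln(\ep^{-1})$-Lipschitz bound on $\varphi^\ep$. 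Taking the $H^2$ route instead yields local order $\tau^2$ but with prefactor $\ep^{-1}\ln(\ep^{-1})$---this is exactly Remark~\ref{rem:better-order}. The commutator/Duhamel expansion you sketch is precisely what the paper avoids, because differentiating $\varphi^\ep$ introduces negative powers of $\ep$.
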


\begin{remark}
  As established in \cite[Theorem~2.2]{BCST}, for an arbitrarily large
fixed $T>0$, the above assumptions are satisfied as soon as $u_0\in
H^1(\Omega)$ if $d=1$, and $u_0\in H^2(\Omega)$ if $d=2,3$. More
precisely, for $k=1,2$,
\begin{equation*}
\sup_{t\in [0,T]}  \|u^\ep(t)\|_{H^k(\Omega)}\le C\left(\|u_0\|_{H^k(\Omega)}\right),
\end{equation*}
for a constant $C$ depending on $\|u_0\|_{H^k(\Omega)}$, \emph{but not
  on $0<\ep\ll1$}.
\end{remark}
The above result provides a convergence of order $1/2$, with a
constant mildly singular in $\ep$ (a logarithm). We will see in
Remark~\ref{rem:better-order} that it is possible to establish
the convergence of order $1$, which is rather natural for a
Lie-Trotter scheme, but the price to pay is a much more singular
dependence with respect to $\ep$. And numerical
  observations show that the convergence rate degenerates to $1/2$ for
  solutions belonging to $H^1$ in 1D (cf. first figure in Fig. \ref{fig:case2-L2}).
\smallbreak

Before giving the proof, we introduce the
following lemma, which is a variant of
\cite[Lemma~9.3.5]{CazCourant}, established in \cite{BCST}.
\begin{lemma}\label{pre}
For any $z_1$, $z_2\in\mathbb{C}$, we have
\[\left|\mathrm{Im}\left((\varphi^\ep(z_1)-\varphi^\ep(z_2))
(\overline{z_1}-\overline{z_2})\right)\right|\le 2|z_1-z_2|^2.\]
\end{lemma}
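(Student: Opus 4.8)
The plan is to reduce this complex-analytic inequality to a one-variable estimate on the logarithm together with an elementary planar inequality. Writing $\varphi^\ep(z)=z\,g(|z|)$ with $g(r)=2\ln(\ep+r)$, I would first expand the product $(\varphi^\ep(z_1)-\varphi^\ep(z_2))(\overline{z_1}-\overline{z_2})$. The two ``diagonal'' terms $|z_1|^2g(|z_1|)$ and $|z_2|^2g(|z_2|)$ are real and drop out of the imaginary part, while the cross terms combine, using $\mathrm{Im}(z_2\overline{z_1})=-\mathrm{Im}(z_1\overline{z_2})$, to yield the exact identity
\[
\mathrm{Im}\big((\varphi^\ep(z_1)-\varphi^\ep(z_2))(\overline{z_1}-\overline{z_2})\big)=\mathrm{Im}(z_1\overline{z_2})\,\big(g(|z_2|)-g(|z_1|)\big).
\]
This already isolates the only source of trouble: the difference of logarithms of $|z_1|$ and $|z_2|$.

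Second, I would estimate that logarithm difference. The crucial point is to use the \emph{symmetric} bound $|\ln a-\ln b|\le |a-b|/\sqrt{ab}$ for $a,b>0$, which follows from $2\ln t\le t-t^{-1}$ for $t\ge1$ (equivalently $(t-1)^2\ge0$, after setting $t=\sqrt{a/b}$), rather than the cruder one-sided bound $|a-b|/\min(a,b)$. Applying it with $a=\ep+|z_1|$, $b=\ep+|z_2|$ and then using $\sqrt{(\ep+|z_1|)(\ep+|z_2|)}\ge\sqrt{|z_1||z_2|}$ discards the regularization harmlessly; writing $|\mathrm{Im}(z_1\overline{z_2})|=r_1 r_2|\sin\phi|$ in polar form $z_j=r_je^{i\theta_j}$, $\phi=\theta_1-\theta_2$, this gives the reduction
\[
\big|\mathrm{Im}(z_1\overline{z_2})\,(g(|z_2|)-g(|z_1|))\big|\le 2\sqrt{r_1 r_2}\,|r_1-r_2|\,|\sin\phi|.
\]

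Third, it remains to prove the purely geometric inequality $\sqrt{r_1 r_2}\,|r_1-r_2|\,|\sin\phi|\le|z_1-z_2|^2$. Writing $|z_1-z_2|^2=(r_1-r_2)^2+2r_1 r_2(1-\cos\phi)$ and applying the AM--GM inequality to these two nonnegative summands gives $|z_1-z_2|^2\ge 2|r_1-r_2|\sqrt{2r_1 r_2(1-\cos\phi)}$; since $|\sin\phi|=\sqrt{(1-\cos\phi)(1+\cos\phi)}$ and $1+\cos\phi\le 2$, the right-hand side dominates $\sqrt{r_1 r_2}\,|r_1-r_2|\,|\sin\phi|$ with room to spare. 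Combining the three steps delivers the asserted bound $2|z_1-z_2|^2$.

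I expect the only real subtlety to lie in the second step: the naive estimate $|\ln a-\ln b|\le|a-b|/\min(a,b)$ produces a planar inequality that is genuinely \emph{false} when $r_1\gg r_2$, so the geometric-mean refinement is not cosmetic but is exactly what makes the final constant uniform in $\ep$. Everything else is bookkeeping, and the degenerate cases $r_1 r_2=0$ need no separate treatment since both sides then vanish.
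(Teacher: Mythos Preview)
Your argument is correct. The identity in the first step is exact, the logarithmic mean inequality $|\ln a-\ln b|\le|a-b|/\sqrt{ab}$ is valid (your derivation via $2\ln t\le t-t^{-1}$ is clean), and the geometric inequality in the third step goes through with the factor of two to spare that you note. The degenerate case $r_1r_2=0$ is indeed trivial since $\mathrm{Im}(z_1\overline{z_2})=0$ there.

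As for comparison: the paper does not actually prove this lemma. It simply states it as ``a variant of \cite[Lemma~9.3.5]{CazCourant}, established in \cite{BCST},'' and moves on. So your write-up is a genuine addition rather than a reproduction. The classical Cazenave argument for the unregularized nonlinearity $z\ln|z|^2$ proceeds along broadly similar lines (isolating $\mathrm{Im}(z_1\overline{z_2})$ times a difference of logarithms), and your use of the symmetric bound $|a-b|/\sqrt{ab}$ in place of $|a-b|/\min(a,b)$ is exactly the right choice: as you observe, the cruder bound would leave a planar inequality that fails when the moduli are very unequal. Your proof is self-contained, elementary, and uniform in $\ep$, which is what the downstream estimates in the paper require.
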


\begin{lemma}[Local error]\label{local}
Assume $u_0\in H^1(\Omega)$ for $d=1$ or $u_0\in H^2(\Omega)$ for
$d=2, 3$. Let $\Psi^t$ denote the exact flow of \eqref{RLSE}, i.e.,
$u^\ep(t)=\Psi^t(u_0)$. Then for $\tau\le 1$, there exists $\ep_0>0$
depending on $\|u_0\|_{H^1(\Omega)}$ for $d=1$ and
$\|u_0\|_{H^2(\Omega)}$ for $d=2, 3$ such that when $\ep\le \ep_0$, we
have
\[\|\Psi^\tau(u_0)-\Phi^\tau(u_0)\|_{L^2(\Omega)}\le C \|u_0\|_{H^1(\Omega)}\ln(\ep^{-1})\tau^{3/2}.\]
\end{lemma}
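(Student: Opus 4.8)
\emph{Proof proposal.} The plan is to conjugate the evolution by the free Schr\"odinger group, so that the local error becomes the $L^2$-distance between two solutions of reaction-type equations issued from the same datum; to close that distance by an energy estimate in which the nonlinearity is absorbed through Lemma~\ref{pre}, whose constant is independent of $\ep$; and to control the resulting consistency defect by the H\"older-in-time smoothing of $e^{it\Delta}$ combined with the $\ep$-uniform Sobolev bounds on $u^\ep$ recalled in the Remark. Concretely, writing $u^\ep(t)=\Psi^t(u_0)$, I set $v(t):=e^{-it\Delta}u^\ep(t)$ and $\omega(t):=\Phi_B^t(u_0)$, so that $v(0)=\omega(0)=u_0$ and, by the $L^2$-isometry of $e^{i\tau\Delta}$, $\|\Psi^\tau(u_0)-\Phi^\tau(u_0)\|_{L^2(\Omega)}=\|v(\tau)-\omega(\tau)\|_{L^2(\Omega)}$. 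A direct computation from \eqref{RLSE} and \eqref{nlp} gives $i\p_t v=e^{-it\Delta}\varphi^\ep(u^\ep)$ and $i\p_t\omega=\varphi^\ep(\omega)$, hence, inserting $\pm\varphi^\ep(v)$,
\[
i\p_t(v-\omega)=\delta(t)+\big(\varphi^\ep(v)-\varphi^\ep(\omega)\big),\qquad
\delta(t):=e^{-it\Delta}\varphi^\ep(u^\ep(t))-\varphi^\ep\big(e^{-it\Delta}u^\ep(t)\big),
\]
with $(v-\omega)(0)=0$.

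Next I would pair the equation with $v-\omega$ in $L^2(\Omega)$ and take imaginary parts; Lemma~\ref{pre} bounds the nonlinear contribution by $2\|v-\omega\|_{L^2}^2$, so that $\tfrac{d}{dt}\|v-\omega\|_{L^2}\le\|\delta(t)\|_{L^2}+2\|v-\omega\|_{L^2}$, and a standard Gronwall argument together with $\tau\le1$ yields $\|v(\tau)-\omega(\tau)\|_{L^2}\le e^{2}\int_0^\tau\|\delta(s)\|_{L^2}\,ds$. This is the decisive step for the $\ep$-dependence: the nonlinearity re-enters the loop only through the $\ep$-independent constant of Lemma~\ref{pre}; using instead the genuine Lipschitz constant of $\varphi^\ep$ on a bounded set, which is $O(\ln(\ep^{-1}))$, would produce a Gronwall factor $\ep^{-C\tau}$ and ruin the estimate.

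It then remains to prove $\|\delta(s)\|_{L^2}\le C\ln(\ep^{-1})\,s^{1/2}$ for $0\le s\le\tau\le1$, with $C$ depending on $\|u_0\|_{H^1(\Omega)}$ ($d=1$) or $\|u_0\|_{H^2(\Omega)}$ ($d=2,3$) but not on $\ep$; integrating over $[0,\tau]$ then gives the claimed $C\ln(\ep^{-1})\tau^{3/2}$. I would use the elementary bound $\|(I-e^{\pm is\Delta})f\|_{L^2}\le 2\,s^{1/2}\|\nabla f\|_{L^2}$ (from $|1-e^{-is|\xi|^2}|\le 2\,s^{1/2}|\xi|$) and split $\delta(s)=(e^{-is\Delta}-I)\varphi^\ep(u^\ep(s))+\big[\varphi^\ep(u^\ep(s))-\varphi^\ep(e^{-is\Delta}u^\ep(s))\big]=:\delta_1+\delta_2$. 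For $\delta_1$: from $\nabla\varphi^\ep(w)=2\ln(\ep+|w|)\nabla w+2w(\ep+|w|)^{-1}\nabla|w|$ and $|\ln(\ep+|w|)|\le\ln(\ep^{-1})+|w|$ one obtains $\|\nabla\varphi^\ep(w)\|_{L^2}\le C\ln(\ep^{-1})\|\nabla w\|_{L^2}+C\|w\|_{L^\infty}\|\nabla w\|_{L^2}$; taking $w=u^\ep(s)$ and using $H^1(\Omega)\hookrightarrow L^\infty$ for $d=1$, $H^2(\Omega)\hookrightarrow L^\infty$ for $d\le3$, together with the $\ep$-uniform bounds $\|u^\ep(s)\|_{H^1(\Omega)}\le C(\|u_0\|_{H^1})$ ($d=1$) and $\|u^\ep(s)\|_{H^2(\Omega)}\le C(\|u_0\|_{H^2})$ ($d=2,3$) from the Remark, I get $\|\varphi^\ep(u^\ep(s))\|_{\dot H^1}\le C\ln(\ep^{-1})$ for $\ep$ small, hence $\|\delta_1\|_{L^2}\le C\ln(\ep^{-1})\,s^{1/2}$. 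For $\delta_2$: since $z\mapsto z\ln(\ep+|z|)$ is $C^1$ on $\mathbb C$ with differential bounded by $\ln(\ep^{-1})+\ln(1+|z|)+1$, on the fixed ball containing the (uniformly bounded) pointwise values of $u^\ep(s)$ and $e^{-is\Delta}u^\ep(s)$ one has $|\varphi^\ep(z_1)-\varphi^\ep(z_2)|\le C\ln(\ep^{-1})|z_1-z_2|$; combined with $\|u^\ep(s)-e^{-is\Delta}u^\ep(s)\|_{L^2}\le 2\,s^{1/2}\|u^\ep(s)\|_{\dot H^1}\le C\,s^{1/2}$ this gives $\|\delta_2\|_{L^2}\le C\ln(\ep^{-1})\,s^{1/2}$, and $\ep_0$ is chosen so that $\ln(\ep_0^{-1})$ dominates all the $\ep$-independent constants above.

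The point I expect to be most delicate is precisely keeping the $\ep$-dependence down to a single logarithm: the Gronwall step must be blind to the $O(\ln(\ep^{-1}))$ Lipschitz constant of $\varphi^\ep$ (this is what Lemma~\ref{pre} buys), and the defect must be measured through the $\ep$-uniform $H^1$/$H^2$ bounds for the \emph{exact} solution $u^\ep$ rather than for $\Phi_B^s(u_0)$, whose $H^2$-norm is only $O(\ep^{-1})$; while the gain $s^{1/2}$ in $\delta(s)$ --- responsible for the final order $\tau^{3/2}$ --- is nothing but the $\tfrac12$-H\"older-in-time regularity of $e^{it\Delta}$.
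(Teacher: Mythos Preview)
Your argument is correct and shares the essential mechanism with the paper's proof: an $L^2$ energy estimate in which the nonlinear difference is absorbed via Lemma~\ref{pre} with the $\ep$-independent constant~$2$, combined with the $\tfrac12$-H\"older smoothing $\|(I-e^{is\Delta})f\|_{L^2}\le\sqrt{2s}\,\|f\|_{H^1}$ and the bounds $\|\varphi^\ep(w)\|_{H^1}\le C\ln(\ep^{-1})\|w\|_{H^1}$, $|\varphi^\ep(z_1)-\varphi^\ep(z_2)|\le C\ln(\ep^{-1})|z_1-z_2|$ on $L^\infty$-bounded sets. The route, however, differs. The paper works directly with $\mathcal E^t=\Psi^t u_0-\Phi^t u_0$, inserts $\varphi^\ep(\Phi^t u_0)$ to apply Lemma~\ref{pre}, and estimates the remaining defect $\varphi^\ep(\Phi^t u_0)-\Phi_A^t\varphi^\ep(\Phi_B^t u_0)$ by anchoring on the \emph{splitting} pieces $\Phi_B^t u_0$ and $\Phi^t u_0$; you conjugate by $e^{-it\Delta}$ first and anchor the defect on the \emph{exact} solution $u^\ep(s)$ and $e^{-is\Delta}u^\ep(s)$. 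What your approach buys is a cleaner $L^\infty$ control in $d=2,3$, since $\|u^\ep\|_{H^2}$ is $\ep$-uniform by the Remark; the paper instead uses $\|\Phi_B^t u_0\|_{H^2}=O(\ep^{-1})$, but this is harmless because it only enters through $|\ln(\ep+|w|)|\le C\ln(\ep^{-1})$ when $|w|\le C/\ep$ --- so your concern that the paper's route would spoil the $\ep$-dependence is unfounded. What the paper's approach buys is self-containment (only the explicit bounds of Lemma~2.1 on $\Phi_B^t$ are used, not the well-posedness theory for \eqref{RLSE}) and the sharper linear form $C\|u_0\|_{H^1}$ of the constant, since $\|\Phi_B^t u_0\|_{H^1}\le(1+2t)\|u_0\|_{H^1}$ exactly; your constant comes out as $C\sup_{0\le s\le\tau}\|u^\ep(s)\|_{H^1}\le C(\|u_0\|_{H^1})$ via the Remark, which is slightly weaker than the statement of the lemma but entirely sufficient for Theorem~\ref{thmlt}.
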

\emph{Proof.}
It can be obtained from the definition that
\begin{align*}
&i\p_t(\Psi^tu_0)+\Delta (\Psi^t u_0)=\varphi^\ep(\Psi^t u_0),\\
&i\p_t(\Phi^tu_0)+\Delta (\Phi^t u_0)=\Phi_A^t(\varphi^\ep(\Phi_B^tu_0)).
\end{align*}
Denoting $\mathcal E^tu_0=\Psi^tu_0-\Phi^tu_0$, we have
\be\label{erq}
i\p_t (\mathcal E^tu_0)+\Delta (\mathcal E^tu_0)=\varphi^\ep(\Psi^tu_0)-\Phi_A^t(\varphi^\ep(\Phi_B^tu_0)).
\ee
Denote by
\begin{equation*}
  (f,g)=\int_\Omega f \overline g \,d\bx
\end{equation*}
the $L^2$ inner product.
Multiplying \eqref{erq} by $\overline{\mathcal E^t u_0}$, integrating in space
and taking the imaginary part, the term corresponding to the Laplacian
vanishes (in the case with a boundary, we use the Dirichlet
boundary condition or periodic boundary condition), and  Lemma~\ref{pre} yields
\begin{align*}
\fl{1}{2}\fl{d}{dt}\|\mathcal E^tu_0\|_{L^2(\Omega)}^2&=\mathrm{Im}\left(
\varphi^\ep(\Psi^tu_0)-\Phi_A^t(\varphi^\ep(\Phi_B^tu_0)), \mathcal E^tu_0\right)\\
&=\mathrm{Im}\left[
\left(\varphi^\ep(\Psi^tu_0)-\varphi^\ep(\Phi^tu_0), \mathcal E^tu_0\right)+\left(\varphi^\ep(\Phi^tu_0)-\Phi_A^t(\varphi^\ep(\Phi_B^tu_0)), \mathcal E^tu_0\right)\right]\\
&\le 2\|\mathcal E^tu_0\|_{L^2(\Omega)}^2+\|
\varphi^\ep(\Phi^tu_0)-\Phi_A^t(\varphi^\ep(\Phi_B^tu_0))\|_{L^2(\Omega)}\|\mathcal
  E^tu_0\|_{L^2(\Omega)},
\end{align*}
which implies
\begin{align}
\fl{d}{dt}\|\mathcal E^tu_0\|_{L^2(\Omega)}&\le 2\|\mathcal E^tu_0\|_{L^2(\Omega)}+\|
\varphi^\ep(\Phi^tu_0)-\Phi_A^t(\varphi^\ep(\Phi_B^tu_0))\|_{L^2(\Omega)}\nn\\
&\le 2\|\mathcal E^tu_0\|_{L^2(\Omega)}+\|
\varphi^\ep(\Phi^tu_0)-\varphi^\ep(\Phi_B^tu_0)\|_{L^2(\Omega)}\nn\\
&\quad+\|\varphi^\ep(\Phi_B^tu_0)-\Phi_A^t(\varphi^\ep(\Phi_B^tu_0))\|_{L^2(\Omega)}.
\label{tmp1}
\end{align}
If $\Omega=\mathbb{R}^d$, for any $f\in H^1(\Omega)$, we compute
\begin{align}
\left\|f-\Phi_A^tf\right\|_{L^2(\Omega)}&=
\left\|(1-e^{-it|\xi|^2})\widehat{f}(\xi)\right\|_{L^2(\Omega)}=2\left\|\sin\left(t|\xi|^2/2\right)
\widehat{f}(\xi)\right\|_{L^2(\Omega)}\nn\\
&\le \sqrt{2t}\left\||\xi|\widehat{f}(\xi)\right\|_{L^2(\Omega)}\le \sqrt{2t}\,\|f\|_{H^1(\Omega)}.\label{ld}
\end{align}
When $\Omega$ is a bounded domain, \eqref{ld} can be similarly
obtained via the discrete Fourier transform. At this stage, one could
argue that the above estimate can be improved, by removing the square
root, the price to pay being an $H^2$-norm instead of an
$H^1$-norm. It turns out that this approach eventually yields an extra $1/\ep$
factor in the error estimate, which we want to avoid here; see
Remark~\ref{rem:better-order}.
Recalling that
$\varphi^\ep(\Phi_B^tu_0)=u_0\ln(\ep+|u_0|)^2e^{-it\ln(\ep+|u_0|)^2}$, we compute
\[\nabla \varphi^\ep(\Phi_B^tu_0)=e^{-it\ln(\ep+|u_0|)^2}\left[
\nabla u_0\ln(\ep+|u_0|)^2+\fl{2u_0\nabla |u_0|}{\ep+|u_0|}\left(1-it\ln(\ep+|u_0|)^2\right)\right],\]
which yields
\[|\nabla \varphi^\ep(\Phi_B^tu_0)|\le 2|\nabla u_0|\left(1+(1+2t)\max\left\{|\ln(\ep)|, \left|\ln\left(\ep+\|u_0\|_{L^\infty(\Omega)}\right)\right|\right\}\right).\]
This implies
\[\|\varphi^\ep(\Phi_B^tu_0)\|_{H^1(\Omega)}\le C\ln(\ep^{-1})(1+t)\|u_0\|_{H^1(\Omega)},\]
when $\ep\lesssim 1/\|u_0\|_{L^\infty(\Omega)}$.
To check this property, we recall that
 $ H^1(\Omega)\hookrightarrow L^\infty(\Omega)$ for $d=1$, and $
 H^2(\Omega)\hookrightarrow L^\infty(\Omega)$ for $d=2, 3$.
It follows from \eqref{Ap} and \eqref{Bp} that
\[\|\Phi^tu_0\|_{H^2(\Omega)}=\|\Phi_B^tu_0\|_{H^2(\Omega)}\le C(\|u_0\|_{H^2(\Omega)})(1+t+t^2)/\ep.\]
Hence by Sobolev imbedding, when $t\le 1$,
we have $\|\Phi^tu_0\|_{L^\infty(\Omega)}$,
$\|\Phi_B^tu_0\|_{L^\infty(\Omega)}\le
\fl{C(\|u_0\|_{H^2(\Omega)})}{\ep}$ for $d=2, 3$ and
$\|\Phi^tu_0\|_{L^\infty(\Omega)}$,
$\|\Phi_B^tu_0\|_{L^\infty(\Omega)}\le C\|u_0\|_{H^1(\Omega)}$ for
$d=1$. In particular, the property $\ep\lesssim
1/\|u_0\|_{L^\infty(\Omega)}$ is always satisfied.

 Hence  we have
\be\label{tmp2}
\begin{aligned}
\|\varphi^\ep(\Phi_B^tu_0)-\Phi_A^t(\varphi^\ep(\Phi_B^tu_0))\|_{L^2(\Omega)}&\le \sqrt{2t}
\|\varphi^\ep(\Phi_B^tu_0)\|_{H^1(\Omega)}\\
&\le C\ln(\ep^{-1})\sqrt{t}(1+t)\|u_0\|_{H^1(\Omega)},
\end{aligned}
\ee
for $\ep\le \ep_1$, with $\ep_1$ depending on $\|u_0\|_{H^1(\Omega)}$ for $d=1$ and $\|u_0\|_{H^2(\Omega)}$ for $d=2, 3$. Next we claim that for $v(\bx)$, $w(\bx)$ satisfying $|v(\bx)|, |w(\bx)|\le C_1/\ep$, it can be established that
\[|\varphi^\ep(v(\bx))-\varphi^\ep(w(\bx))|\le C\ln(\ep^{-1})|v(\bx)-w(\bx)|,\]
when $\ep$ is sufficiently small.
Assuming, for example, $0\le|w(\bx)|\le|v(\bx)|$, then
\begin{align}
|\varphi^\ep(v(\bx))-\varphi^\ep(w(\bx))|&=2\left|(v(\bx)-w(\bx))\ln(\ep+|v(\bx)|)
+w(\bx)\ln\left(1+\fl{|v(\bx)|-|w(\bx)|}{\ep+|w(\bx)|}\right)\right|\nn\\
&\le 2|v(\bx)-w(\bx)||\ln(\ep+|v(\bx)|)|+\fl{2|w(\bx)|}{\ep+|w(\bx)|}|v(\bx)-w(\bx)|\nn\\
&\le C\ln(\ep^{-1})|v(\bx)-w(\bx)|,\label{vp}
\end{align}
when $\ep\le \min(C_1,1)$.
Thus, we obtain
\begin{align}
\|\varphi^\ep(\Phi^tu_0)-\varphi^\ep(\Phi_B^tu_0)\|_{L^2(\Omega)}
&\le C\ln(\ep^{-1})
\|\Phi^tu_0-\Phi_B^tu_0\|_{L^2(\Omega)}\nn\\
&\le C\ln(\ep^{-1})\sqrt{2t}\|\Phi_B^tu_0\|_{H^1(\Omega)}\nn\\
&\le C\ln(\ep^{-1})\sqrt{2t}(1+2t)\|u_0\|_{H^1(\Omega)},\label{tmp3}
\end{align}
when $\ep\le \ep_2$ with $\ep_2$ depending on $\|u_0\|_{H^2(\Omega)}$ for $d=2, 3$ and $\|u_0\|_{H^1(\Omega)}$ for $d=1$.
Combining \eqref{tmp1}, \eqref{tmp2} and \eqref{tmp3}, we get
\[\fl{d}{dt}\|\mathcal E^tu_0\|_{L^2(\Omega)}\le 2\|\mathcal E^t u_0\|_{L^2(\Omega)}+C\ln(\ep^{-1})\sqrt{t}(1+t)\|u_0\|_{H^1(\Omega)}.\]
Applying the Gronwall's inequality, when $\tau\le 1$, we have
\[\|\mathcal E^\tau u_0\|_{L^2(\Omega)}\le C\ln(\ep^{-1})\sqrt{\tau}(1+\tau)(e^{2\tau}-1)\|u_0\|_{H^1(\Omega)}\le C\ln(\ep^{-1})\tau^{3/2}\|u_0\|_{H^1(\Omega)},\]
when $\ep\le \ep_0=\min\{\ep_1, \ep_2\}$ depending on
$\|u_0\|_{H^1(\Omega)}$ for $d=1$ and $\|u_0\|_{H^2(\Omega)}$ for
$d=2, 3$.
\hfill $\square$ \bigskip

Furthermore, we also need the following lemma concerning on the stability property.
\begin{lemma}[Stability]\label{stab}
Let $f$, $g\in L^2(\Omega)$. Then for all $\tau>0$, we have
\[\|\Phi^\tau(f)-\Phi^\tau(g)\|_{L^2(\Omega)}\le (1+2\tau)\|f-g\|_{L^2(\Omega)}.\]
\end{lemma}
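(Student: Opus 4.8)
The plan is to reduce the claimed bound, via the $L^2$-isometry of the free flow, to a pointwise estimate on the nonlinear flow $\Phi_B^\tau$. Since $\Phi^\tau=\Phi_A^\tau\circ\Phi_B^\tau$ and, by \eqref{Ap} with $s=0$, $\Phi_A^\tau=e^{i\tau\Delta}$ is an isometry on $L^2(\Omega)$, it suffices to prove
\[
\|\Phi_B^\tau(f)-\Phi_B^\tau(g)\|_{L^2(\Omega)}\le (1+2\tau)\,\|f-g\|_{L^2(\Omega)}.
\]
Because $\Phi_B^\tau$ acts pointwise, $(\Phi_B^\tau h)(\bx)=h(\bx)\,e^{-i\tau\ln(\ep+|h(\bx)|)^2}$, this will follow by squaring and integrating over $\Omega$ once the scalar inequality
\[
\left|\Phi_B^\tau(a)-\Phi_B^\tau(b)\right|\le (1+2\tau)\,|a-b|,\qquad a,b\in\mathbb{C},
\]
is established, where here $\Phi_B^\tau(z)=z\,e^{-i\theta(z)}$ with $\theta(z):=\tau\ln(\ep+|z|)^2=2\tau\ln(\ep+|z|)$.

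To prove the scalar inequality I would assume, without loss of generality, $|b|\le|a|$, and split
\[
\Phi_B^\tau(a)-\Phi_B^\tau(b)=(a-b)\,e^{-i\theta(a)}+b\,\bigl(e^{-i\theta(a)}-e^{-i\theta(b)}\bigr).
\]
The first term has modulus $|a-b|$. For the second, I use $\bigl|e^{-i\theta(a)}-e^{-i\theta(b)}\bigr|\le|\theta(a)-\theta(b)|$ together with
\[
|\theta(a)-\theta(b)|=2\tau\bigl(\ln(\ep+|a|)-\ln(\ep+|b|)\bigr)=2\tau\ln\!\Bigl(1+\tfrac{|a|-|b|}{\ep+|b|}\Bigr)\le \frac{2\tau\,|a-b|}{\ep+|b|},
\]
where the last inequality uses $\ln(1+x)\le x$ for $x\ge0$ and $\bigl||a|-|b|\bigr|\le|a-b|$. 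Hence the second term is bounded by $|b|\cdot\frac{2\tau|a-b|}{\ep+|b|}\le 2\tau\,|a-b|$, since $|b|/(\ep+|b|)\le1$, and summing the two contributions gives exactly $(1+2\tau)|a-b|$. Squaring, integrating in $\bx$, taking square roots, and composing with the isometry $\Phi_A^\tau$ then completes the proof.

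There is no genuine obstacle here; the only point worth flagging is the choice to argue pointwise. Applying Lemma~\ref{pre} to $\omega=\Phi_B^t(f)$, $\tilde\omega=\Phi_B^t(g)$ yields the differential inequality $\tfrac{d}{dt}\|\omega-\tilde\omega\|_{L^2}\le 2\|\omega-\tilde\omega\|_{L^2}$ and hence $\|\Phi_B^\tau(f)-\Phi_B^\tau(g)\|_{L^2}\le e^{2\tau}\|f-g\|_{L^2}$, which would already suffice for the error analysis (both $(1+2\tau)^n$ and $e^{2n\tau}$ stay bounded when $n\tau\le T$); but the sharper constant $1+2\tau$ stated in the lemma, consistent with the growth factor appearing in \eqref{Bp}, comes out of the elementary pointwise computation above rather than from the Gronwall argument.
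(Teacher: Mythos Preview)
Your proof is correct and follows essentially the same route as the paper: reduce to $\Phi_B^\tau$ via the $L^2$-isometry of $\Phi_A^\tau$, then prove the pointwise bound by inserting an intermediate phase, bounding the phase difference with $\ln(1+x)\le x$, and using that the smaller modulus divided by $\ep$ plus itself is at most $1$. The only cosmetic difference is which variable you label as the smaller one; the algebra and inequalities are identical.
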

\emph{Proof.}
Noticing that $\Phi_A^\tau$ is a linear isometry on $H^s(\Omega)$, we obtain that
\[
\|\Phi^\tau(f)-\Phi^\tau(g)\|_{L^2(\Omega)}=\|\Phi_B^\tau(f)-\Phi_B^\tau(g)\|_{L^2(\Omega)}.\]
We claim that for any $\bx\in\Omega$, we have
\[|\Phi_B^\tau(f)(\bx)-\Phi_B^\tau(g)(\bx)|\le (1+2\tau)|f(\bx)-g(\bx)|.\]
Assuming, for example, $|f(\bx)|\le|g(\bx)|$, then by inserting a term
$f(\bx)e^{-i\tau\ln(\ep+|g(\bx)|)^2}$, we can get that
\begin{align*}
|\Phi_B^\tau(f)(\bx)-\Phi_B^\tau(g)(\bx)|&=\left|f(\bx)e^{-i\tau\ln(\ep+|f(\bx)|)^2}-
g(\bx)e^{-i\tau\ln(\ep+|g(\bx)|)^2}\right|\\
&=\Big|f(\bx)-g(\bx)+f(\bx)\Big(e^{2i\tau\ln(\fl{\ep+|g(\bx)|}{\ep+|f(\bx)|})}-1\Big)\Big|\\
&\le|f(\bx)-g(\bx)|+2|f(\bx)|\Big|\sin\Big(\tau\ln\Big(\fl{\ep+|g(\bx)|}{\ep+|f(\bx)|}\Big)\Big)\Big|\\
&\le |f(\bx)-g(\bx)|+2\tau|f(\bx)|\ln\Big(1+\fl{|g(\bx)|-|f(\bx)|}{\ep+|f(\bx)|}\Big)\\
&\le(1+2\tau)|f(\bx)-g(\bx)|.
\end{align*}
When $|f(\bx)|\ge |g(\bx)|$,  the same inequality is obtained by
exchanging $f$ and $g$ in the above computation. Thus the proof is completed.
\hfill $\square$

\begin{proof}[Proof of Theorem~\ref{thmlt}] It can be easily concluded
  from \eqref{unp} that $u^{\ep,n}\in H^1(\Omega)$ and
  $\|u^{\ep,n}\|_{H^1(\Omega)}\le e^{2T} \|u_0\|_{H^1(\Omega)}$.
The triangle inequality, \eqref{unp}, Lemmas~\ref{local} and \ref{stab} yield
\begin{align*}
&\hspace{-3mm}\|u^{\ep,n}-u^\ep(t_{n})\|_{L^2(\Omega)}\\
&=\|\Phi^\tau(u^{\ep,n-1})-\Psi^\tau(u^\ep(t_{n-1}))\|_{L^2(\Omega)}\\
&\le\|\Phi^\tau(u^{\ep,n-1})-\Phi^\tau(u^\ep(t_{n-1}))\|_{L^2(\Omega)}+
\|\Phi^\tau(u^\ep(t_{n-1}))-
\Psi^\tau(u^\ep(t_{n-1}))\|_{L^2(\Omega)}\\
&\le (1+2\tau)\|u^{\ep,n-1}-u^\ep(t_{n-1})\|_{L^2(\Omega)}+\|\mathcal E^\tau(u^\ep(t_{n-1}))\|_{L^2(\Omega)}\\
&\le (1+2\tau)\|u^{\ep,n-1}-u^\ep(t_{n-1})\|_{L^2(\Omega)}+C\|u^\ep(t_{n-1})\|_{H^1(\Omega)}\ln(\ep^{-1})\tau^{3/2}\\
&\le C\|u^\ep\|_{L^\infty(0,T; H^1(\Omega))}\ln(\ep^{-1})\tau^{3/2}(1+1+2\tau)+(1+2\tau)^2\|u^{\ep,n-2}-u^\ep(t_{n-2})\|_{L^2(\Omega)}\\
&\le\cdots\\
&\le C\|u^\ep\|_{L^\infty(0,T; H^1(\Omega))}\ln(\ep^{-1})\tau^{3/2}\left[1+(1+2\tau)+\cdots+(1+2\tau)^{n-1}\right]\\
&\quad+(1+2\tau)^{n}\|u^{\ep,0}-u_0\|_{L^2(\Omega)}\\
&\le C\|u^\ep\|_{L^\infty(0,T; H^1(\Omega))}\ln(\ep^{-1})\tau^{1/2}(1+2\tau)^{n}\\
&\le C\|u^\ep\|_{L^\infty(0,T; H^1(\Omega))} e^{2T}\ln(\ep^{-1})\tau^{1/2},
\end{align*}
where we have used $u^{\ep,0}=u_0$, see \eqref{LT}. This completes the proof.
\end{proof}

\begin{remark}\label{rem:better-order}
Noticing that for any $f\in H^2(\Omega)$ and $t\ge 0$, we have \cite{besse}
\[\|f-\Phi_A^t f\|_{L^2(\Omega)}\le t\|f\|_{H^2(\Omega)},\]
and by tedious calculation, one can get that
\[\|\varphi^\ep(f)\|_{H^2(\Omega)}\le C \ln(\ep^{-1})\|f\|_{H^2(\Omega)}+C\ep^{-1}\|\nabla f\|_{L^4(\Omega)}^2,\]
it can be concluded from \eqref{tmp1} that
\begin{align*}
\fl{d}{dt}\|\mathcal E^tu_0\|_{L^2(\Omega)}&\le
2\|\mathcal E^tu_0\|_{L^2(\Omega)}+\|\varphi^\ep(\Phi^tu_0)-\varphi^\ep(\Phi_B^tu_0)\|_{L^2(\Omega)}\nn\\
&\quad+\|\varphi^\ep(\Phi_B^tu_0)-\Phi_A^t(\varphi^\ep(\Phi_B^tu_0))\|_{L^2(\Omega)}\nn\\
&\le 2\|\mathcal E^tu_0\|_{L^2(\Omega)}+C \ln(\ep^{-1})\|\Phi^t u_0-\Phi_B^t u_0\|_{L^2(\Omega)}+t
\|\varphi^\ep(\Phi_B^tu_0)\|_{H^2(\Omega)}\\
&\le 2\|\mathcal E^tu_0\|_{L^2(\Omega)}+C t\ln(\ep^{-1})\|\Phi_B^t u_0\|_{H^2(\Omega)}+C t\ep^{-1}\|\nabla\Phi_B^tu_0 \|_{L^4(\Omega)}^2\\
&\le 2\|\mathcal E^tu_0\|_{L^2(\Omega)}+C(\|u_0\|_{H^2(\Omega)}) \ep^{-1}\ln(\ep^{-1})t+C t\ep^{-1}\|\nabla u_0 \|_{L^4(\Omega)}^2\\
&\le 2\|\mathcal E^tu_0\|_{L^2(\Omega)}+C(\|u_0\|_{H^2(\Omega)}) \ep^{-1}\ln(\ep^{-1})t,
\end{align*}
when $\ep$ is sufficiently small. Hence by using similar arguments, we can get that
\[\|u^{\ep,n}-u^\ep(t_n)\|_{L^2(\Omega)}\le C(T, \|u^\ep\|_{L^\infty([0,T];H^2(\Omega))})\ep^{-1}\ln(\ep^{-1})\tau,\]
when $\ep\le c$, where $c>0$ depends on $\|u^\ep\|_{L^\infty(0,T;
  H^1(\Omega))}$ for $d=1$ and $\|u^\ep\|_{L^\infty(0,T;
  H^2(\Omega))}$ for $d=2, 3$. This approach yields a better
convergence rate for fixed $\ep>0$, but with a terrible dependence
upon $\ep$, as $\ep$ is intended to go to zero to recover the solution
of \eqref{LSE}. Following Theorem~\ref{thmlt}, we get a reasonable
numerical approximation of the solution $u$ to \eqref{LSE} provided
that $\tau\ll 1/(\ln(\ep^{-1}))^2$ and $\ep\ll 1$ (for $u^\ep$ to
approximate $u$), while the above
estimate requires the stronger condition $\tau\ll \ep/\ln(\ep^{-1})$,
and still $\ep\ll 1$.
\end{remark}

\begin{remark}
For the other Lie-Trotter splitting
\[u^{\ep,n+1}= \Phi_B^{\tau}\left(\Phi_A^\tau(u^{\ep,n})\right)=\Phi_A^\tau(u^{\ep,n})-i\int_0^\tau \varphi^\ep\left(\Phi_B^s \Phi_A^\tau (u^{\ep,n})\right)ds,\]
unfortunately, we cannot get the similar error estimate as in
Theorem~\ref{thmlt}, since the proof involves $(\varphi^\ep)'$ or even
$(\varphi^\ep)''$, which yields negative powers of $\ep$ in  the local
error. In fact, by using the standard
arguments via the Lie commutator as in \cite{lubich2008}, we can get
the error bound
\[\|u^{\ep,n}-u^\ep(t_n)\|_{L^2(\Omega)}\le C(T, \|u^\ep\|_{L^\infty([0,T];H^2(\Omega))})\ep^{-1}\tau,\]
when $\ep\le c$ with $c$ depending on $\|u^\ep\|_{L^\infty(0,T;
  H^2(\Omega))}$.
\end{remark}
\begin{remark}[Strang splitting]
  When considering a Strang splitting,
    \begin{equation}
    \label{ST1}
u^{\ep,n+1}= \Phi_B^{\tau/2}\left(\Phi_A^\tau
    \left(\Phi_B^{\tau/2}(u^{\ep,n})\right)\right),
  \end{equation}
  or
  \begin{equation}
  \label{ST2}
    u^{\ep,n+1}= \Phi_A^{\tau/2}\left(\Phi_B^\tau
    \left(\Phi_A^{\tau/2}(u^{\ep,n})\right)\right),
  \end{equation}
one would expect to face similar singular factors as above. It turns
out that the analysis is even more intricate than expected, and we
could not get any reasonable estimate in that case, that is,
improving Theorem~\ref{thmlt} in terms of order for fixed $\ep$,
without (too much) singularity in $\ep$. This can be
understood as a remain of the singularity of the logarithm at the
origin, yielding too many negative powers of $\ep$ in the case of
\eqref{RLSE}. Strang splitting usually provides
better error estimates by invoking higher regularity which, in our
case, implies extra negative powers of $\ep$.
\end{remark}

\section{A regularized Crank-Nicolson finite difference method}
\label{sec:CNFD}
In this section, we introduce a conservative Crank-Nicolson finite difference (CNFD) method  for solving the regularized model \eqref{RLSE}. For simplicity of
notation, we  only present the numerical method for
the  RLogSE \eqref{RLSE} in 1D, as  extensions to higher dimensions are
straightforward. When $d=1$, we truncate the RLogSE on a bounded
computational interval $\Og=(a,b)$ with periodic boundary
condition (here $|a|$ and $b$ are chosen large enough such that the
truncation error is negligible):
\be\label{RLSE1d}
\left\{
\begin{aligned}
&i\p_t u^\ep(x,t)+\p_{xx} u^\ep(x,t)=\lambda u^\ep(x,t)\,\ln(\ep+|u^\ep(x,t)|)^2,
\quad x \in \Omega, \quad t>0,\\
&u^\ep(x,0)=u_0(x),\quad x\in\Omega;
\quad u^\ep(a,t)=u^\ep(b,t), \quad u_x^\ep(a,t)=u_x^\ep(b,t),  \quad t\ge0,
\end{aligned}
\right.
\ee

Choose a mesh size $h:=\Delta x=(b-a)/M$ with $M$ being a positive integer and
a time step $\tau:=\Delta t>0$ and denote the grid points and time steps as
$$x_j:=a+jh,\quad j=0,1,\cdots,M;\quad t_k:=k\tau,\quad k=0,1,2,\dots$$
Let $u^{\ep,k}_j$  be the approximation of $u^{\ep}(x_j,t_k)$, and
denote $u^{\ep,k}=(u^{\ep,k}_0, u^{\ep,k}_1, \ldots, u^{\ep,k}_M)^T\in
\mathbb{C}^{M+1}$ as
the numerical solution vectors at $t=t_k$. Define the standard finite difference operators
\[
\delta_t^+ u_j^k=\fl{u_j^{k+1}-u_j^{k}}{\tau},\quad
\delta_x^+ u_j^k=\fl{u_{j+1}^k-u_j^k}{h},\quad
\delta_x^2u_j^k=\fl{u_{j+1}^k-2u_j^k+u_{j-1}^k}{h^2}.
\]
Denote
$$X_M=\left\{v=\left(v_0,v_1,\ldots,v_M\right)^T\  | \ v_0=v_M, v_{-1}=v_{M-1}, \right\} \subseteq \mathbb{C}^{M+1},$$
equipped with inner products and norms defined as (recall that
$u_0=u_M$ by periodic boundary condition)
\be\label{norm}
\begin{split}
&(u, v)=h\sum\limits_{j=0}^{M-1}u_j \overline{v_j}, \quad \|u\|_{L^2}^2=(u, u),\quad |u|_{H^1}^2=(\delta_x^+u, \delta_x^+u),\\
&\|u\|_{H^1}=\|u\|_{L^2}+|u|_{H^1},\quad \|u\|_{L^\infty}=\sup\limits_{0\le j\le M}|u_j|.
\end{split}
\ee
Then we have for $u$, $v\in X_M$,
\be\label{innpX_M}
(-\delta_x^2 u,v)=(\delta_x^+ u,\delta_x^+ v)=(u,-\delta_x^2 v).
\ee
Following the general CNFD form for the nonlinear Schr\"odinger equation as in \cite{glassey,bao2013}, we can get the CNFD discretization as
\be
\label{CNFD}
\left\{
\begin{aligned}
&i\delta_t^+ u_j^{\ep,k}=-\fl{1}{2}\delta_x^2(u_j^{\ep,k}+u_j^{\ep,k+1})+G_\ep(u_j^{\ep,k+1}, u_j^{\ep,k}),\quad j=0,\cdots,M-1,\\
&u_j^{\ep,0}=u_0(x_j), \quad j=0,\cdots,M; \quad u_0^{\ep,k+1}=u_M^{\ep,k+1},\quad u_{-1}^{\ep,k+1}=u_{M-1}^{\ep,k+1},
\end{aligned}
\right.
\quad k\ge 0.
\ee
Here, $G_\ep(z_1, z_2)$ is defined for $z_1$, $z_2\in\mathbb{C}$ as
\[G_\ep(z_1,z_2):=\int_0^1 f_\ep(\theta|z_1|^2+(1-\theta)|z_2|^2)d\theta\cdot\fl{z_1+z_2}{2}
=\fl{F_\ep(|z_1|^2)-F_\ep(|z_2|^2)}{|z_1|^2-|z_2|^2}\cdot \fl{z_1+z_2}{2},\]
with
\begin{align*}
f_\ep(\rho)&=\lambda\ln(\ep+\sqrt{\rho})^2,\\
F_\ep(\rho)&=\int_0^\rho f_\ep(s)ds=2\lambda(\rho-\ep^2)\ln(\ep+\sqrt{\rho})-\lambda \rho+2\ep\lambda\sqrt{\rho}.
\end{align*}
Then following the analogous arguments of the CNFD method for NLS \cite{bao2013,glassey}, we can get the conservation properties in the discretized level
\begin{align*}
&M_h(u^{\ep,k}):=\|u^{\ep,k}\|_{L^2}^2\equiv M_h(u^{\ep,0}),\\
&E^\ep_h(u^{\ep,k}):=|e^{\ep,k}|_{H^1}^2+h\sum\limits_{j=0}^{M-1} F_\ep(|u_j^{\ep,k}|^2)\equiv
E_h^\ep(u^{\ep,0}).
\end{align*}

\section{Numerical results}

In this section, we first test the order of accuracy of the regularized Lie-Trotter splitting (LTSP) scheme \eqref{LT}, the Strang-splitting (STSP) scheme \eqref{ST1} and the
CNFD  scheme \eqref{CNFD}. Then we apply the Strang-splitting method to investigate some long time dynamics of the LogSE. In practical computation, we impose the periodic boundary condition on $\Og=(a,b)$ for the RLogSE \eqref{RLSE}.

For the Lie-Trotter and Strang splitting methods, we employ the Fourier pseudo-spectral discretization \cite{bao2002,bao2003} for the spatial variable. Let $M$ be a positive even integer and denote $h=(b-a)/M$ and the grid points $x_j=a+jh$ ($0\le j\le M-1$). Denote
by $u^{M,k}$ the discretized solution vector over the grid points $x_j$
($0\le j\le M-1$) at time $t=t_k=k\tau$. Let $\mathcal{F}_M$ and $\mathcal{F}^{-1}_M$ denote the discrete Fourier transform and its inverse, respectively. With this notation, $\Phi_A^\tau(u^{M,k})$ \eqref{ABs} can be obtained by
\[\Phi_A^\tau(u^{M,k})=\mathcal{F}^{-1}_M(e^{-i\tau (\mu^M)^2}\mathcal{F}_M(u^{M,k})),\]
where
\[\mu^M=\fl{2\pi}{b-a}\left[0,1,\cdots,\left(\fl{M}{2}-1\right),-\fl{M}{2},\cdots,-1\right],\]
and the multiplication of two vectors is taken as point-wise. Moreover, $\Phi_B^\tau$ can be directly written in physical space.

\subsection{Accuracy test}
\label{sec:Accuracy}
Here, we fix $\lambda=-1$ and $d=1$. We  compare the   LTSP \eqref{LT}, STSP  \eqref{ST1} and  CNFD \eqref{CNFD} schemes for the following two  initial set-ups:

\medskip

{\it Case I}. We consider the smooth Gaussian-type data \eqref{ini-gaus} as
\begin{equation}
\label{ini-data-case1}
u_0(x)=\sqrt[4]{-\lambda/\pi}e^{ivx+\fl{\lambda}{2}x^2}, \quad x\in\mathbb{R},
\end{equation}
where $v$ is a real constant.
Indeed, with this initial data and $\Og=\mathbb{R}$, $\phi$ and $r$ in \eqref{phi}-\eqref{r} can be obtained explicitly and the LogSE \eqref{LSE} admits a moving Gausson solution \eqref{Gaus} with velocity $v$\cite{BiMy79,CaGa-p}.

\medskip

{\it Case II}. We consider the datum in $H^\vartheta(\Og)$ as
\begin{equation}
\label{ini-data-case2}
u^{M,0}=\fl{u_\vartheta^M}{\|u_\vartheta^M\|},\quad
u_\vartheta^M:=\mathcal{F}_M^{-1}\left(|\mu^M|^{-\vartheta}\mathcal{F}_M(\mathcal{U}^{M})\right),\quad
\big(|\mu^M|^{-\vartheta} \big)_l=\left\{
\begin{array}{ll}
|\mu^M_l|^{-\vartheta}, & {\rm if}\;\;\mu_l^M\ne0,\\[0.5em]
0, & {\rm if}\;\; \mu_l^M=0,
\end{array}
\right.
\end{equation}
where
\[
\mathcal{U}^{M}:={\rm rand}(M,1)+i\ {\rm rand}(M,1) \in \mathbb{C}^{M},
\]
with rand$(M,1)$ returning $M$ uniformly distributed random numbers between $0$ and $1$.
For typical initial values, see Fig. \ref{fig:ini_data-case2}.

\begin{figure}[htbp!]
\begin{center}
\includegraphics[width=2.8in,height=2.0in]{./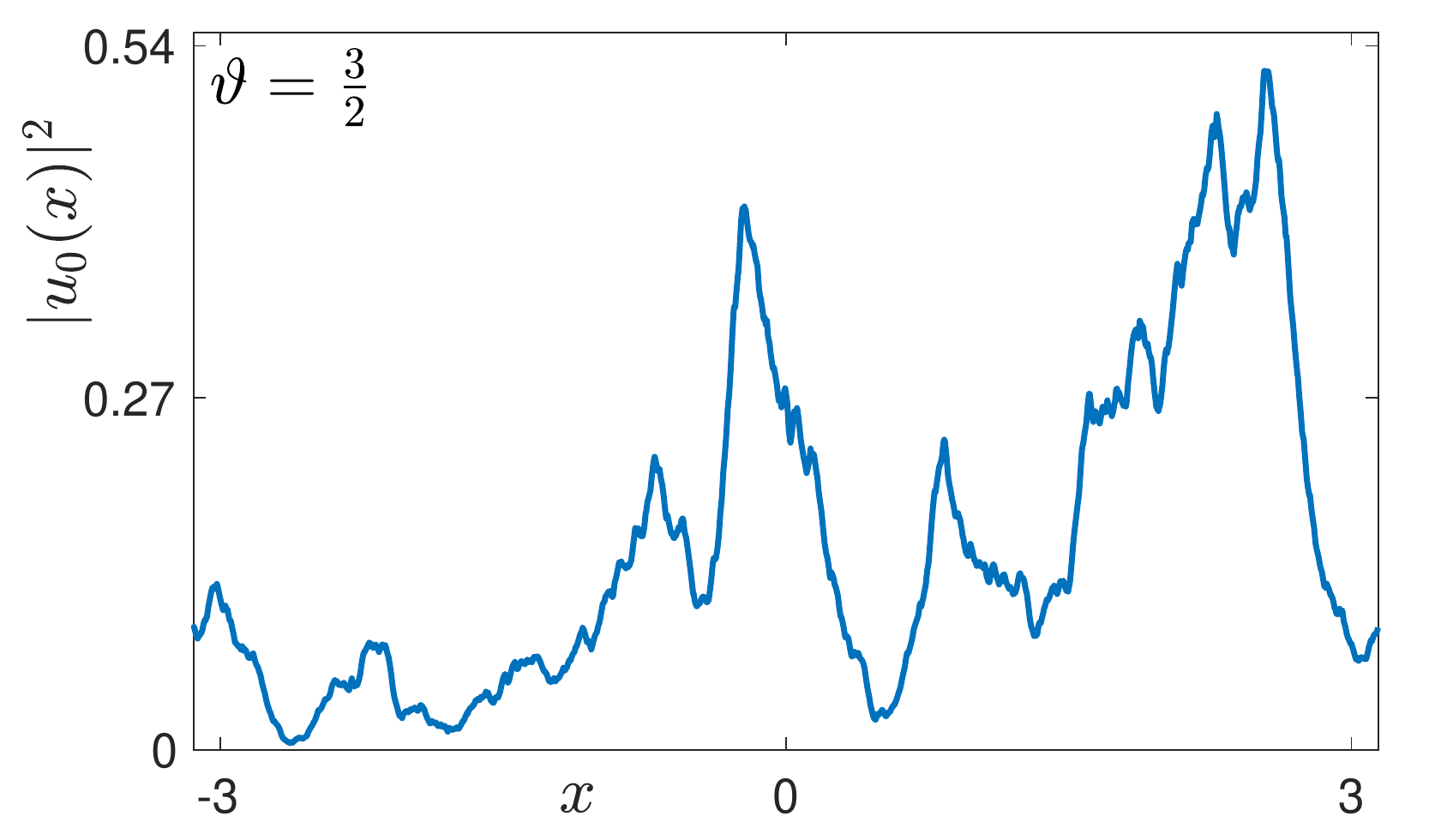}\hspace{0.6cm}
\includegraphics[width=2.8in,height=2.0in]{./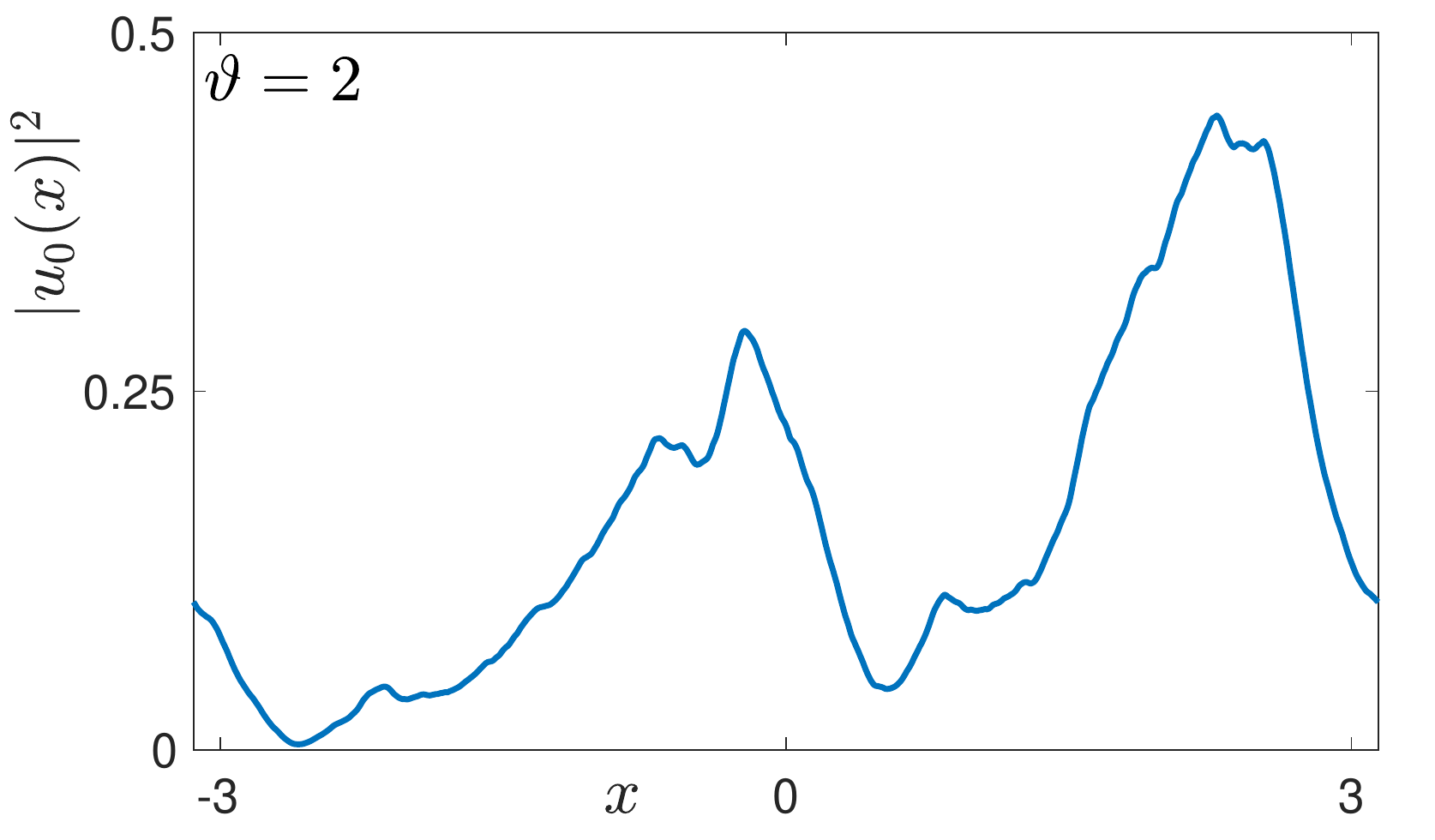}
\end{center}
\caption{{Initial data \eqref{ini-data-case2} for $\vartheta=\fl{3}{2}$ (left) and $\vartheta=2$ (right).}}
\label{fig:ini_data-case2}
\end{figure}

The RLogSE \eqref{RLSE} is then solved by CNFD,  LTSP and STSP on the domain
$\Og=[-16, 16]$ and $\Og=[-\pi, \pi]$ for {\it Case I} and {\it Case II}, respectively.
To quantify the numerical errors, we introduce the error function
\begin{equation}
\label{errfun}
e^{\varepsilon}(t_k)=u^{\varepsilon}(\cdot,t_k)-u^{\varepsilon,k},
\end{equation}
where $u^{\varepsilon}$ is the exact solution of the RLogSE \eqref{RLSE}, while $u^{\varepsilon,k}$ is the
numerical solution obtained by the CNFD,  LTSP or STSP.

\bigskip

{\bf Example 1}. We consider the initial data {\it Case I} \eqref{ini-data-case1} with $v=1$.  The `exact' solution $u^{\varepsilon}$ in \eqref{errfun} is obtained
numerically by the STSP with $\tau=\tau_e=:10^{-6}$ and $h=h_e=:\fl{1}{2^{8}}$. For LTSP and STSP, we fix $h=h_e$ and
vary $\tau=\tau_k^j=:\fl{10^{1-j}}{10+k}$ for $j=1,2,3$ and $k=0,\cdots,90$. For CNFD, we vary  the mesh size and time step
simultaneously under ratio $\tau=\fl{2}{5}h=\tau_j=:\fl{2^{-j}}{5}$ for $j=0,\cdots,7$. Fig. \ref{fig:case1} shows the errors
 $\|e^{\varepsilon}(1)\|_{H^1}$ vs time step $\tau$ under different $\varepsilon$ for CNFD,  LTSP and STSP schemes.
It clearly shows that LTSP/STSP is first/second-order convergent in time while CNFD is second-order convergent in both space and time.
In addition, for other initial datum smooth enough (not shown here for brevity), all methods show their classical orders of convergence.
The same conclusion applies to $\|e^{\varepsilon}(1)\|_{L^2}$ and  $\|e^{\varepsilon}(1)\|_{L^\infty}$. Here the norms $\|\cdot\|_{L^2}$, $\|\cdot\|_{H^1}$ and $\|\cdot\|_{L^\infty}$ are defined as \eqref{norm}.

\begin{figure}[h!]
\begin{center}
\includegraphics[width=3.0in,height=2.0in]{./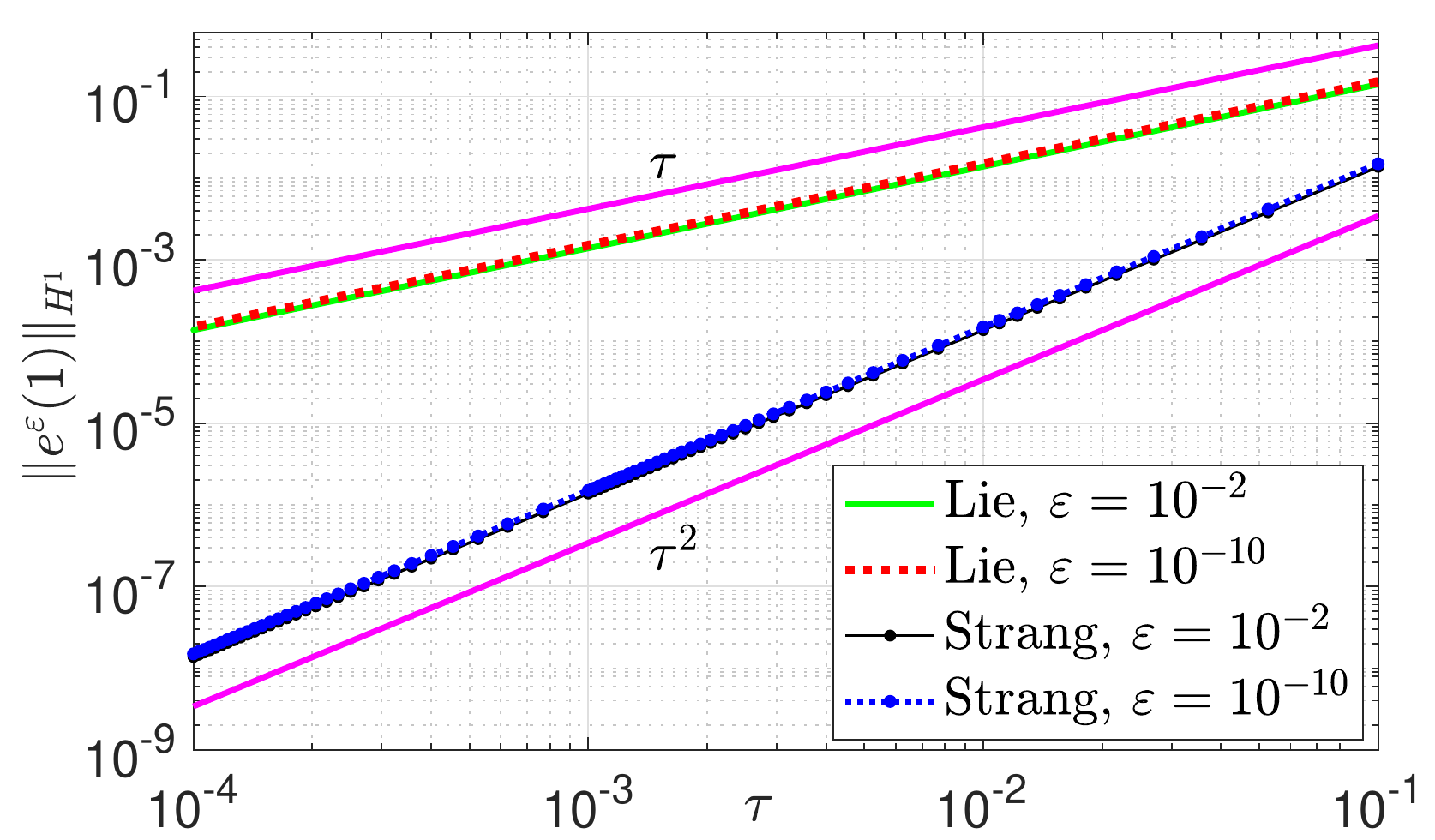}
\hspace{0.1cm}
\includegraphics[width=3.0in,height=2.0in]{./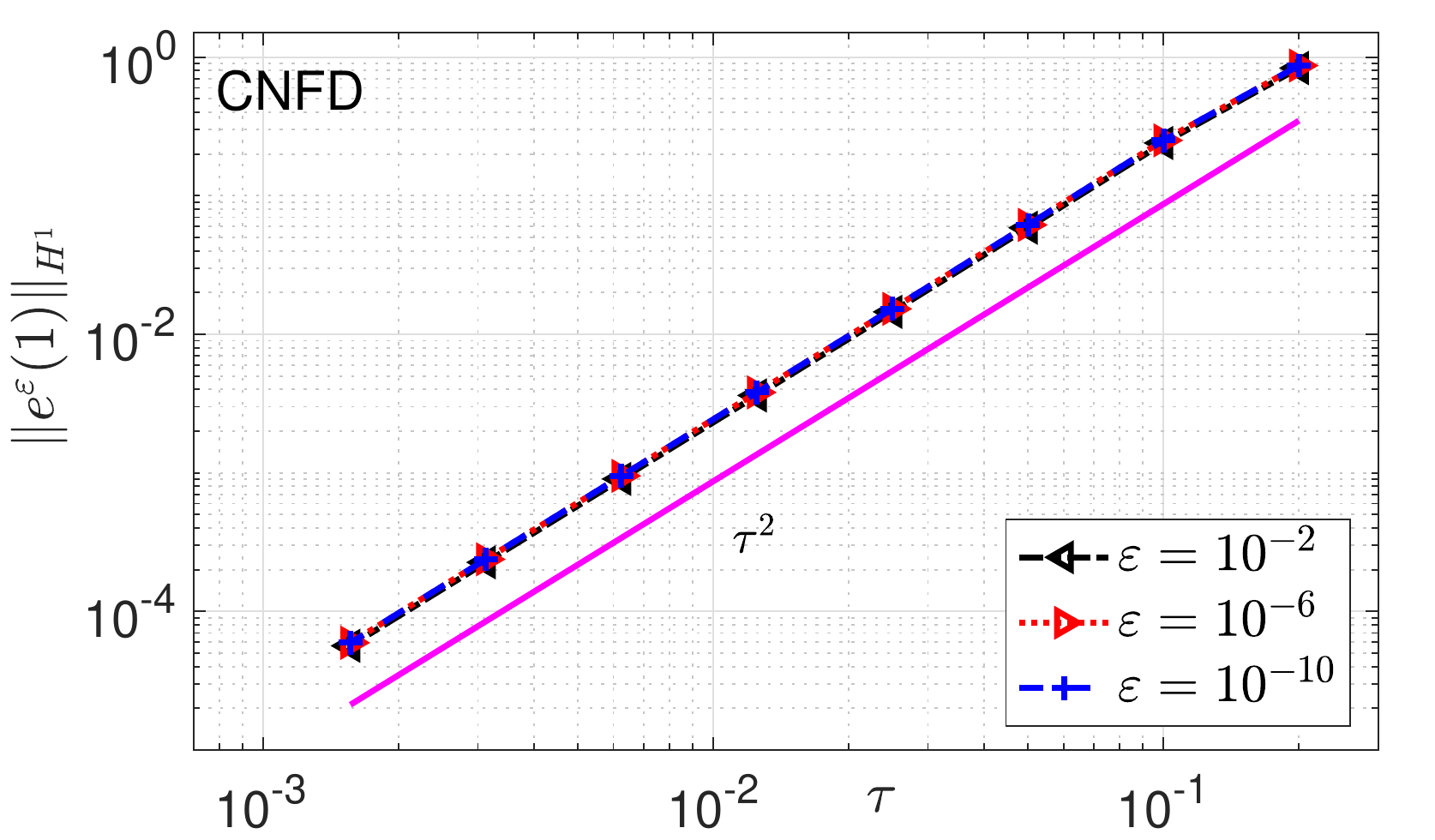}
\end{center}
\caption{{Errors $\|e^\ep(1)\|_{H^1}$ of LTSP \& STSP (left) and CNFD (right) for {\it Case I}. }}
\label{fig:case1}
\end{figure}

\bigskip

{\bf Example 2.}
We consider the initial data {\it Case II} \eqref{ini-data-case2}.  The `exact' solution $u^{\varepsilon}$ in \eqref{errfun} is obtained
numerically by the STSP with $\tau=\tau_e=:10^{-6}$ and $h=h_e=:\fl{\pi}{2^{15}}$. For all the methods, we fix $h=h_e$
and vary $\tau=\tau_k^j=:\fl{10^{1-j}}{10+k}$ for $j=1,2,3$ and $k=0,\cdots,90$.  The errors  $\|e^{\varepsilon}(1)\|_{L^2}$
\and  $\|e^{\varepsilon}(1)\|_{H^1}$ of the schemes LTSP, STSP and CNFD for the initial value \eqref{ini-data-case2}
 with different values of $\vartheta$ are illustrated in Figs. \ref{fig:case2-L2}  and \ref{fig:case2-H1}, respectively. From these figures we can see that:
 (i) For smaller values of $\vartheta$, i.e., when the initial data is not smooth enough, all the errors show a zigzag behavior due to
 happy error cancelation or accumulation occurring. Order reduction occurs for all methods in this case.
 (ii) In $L^2$ norm, the LTSP is half-order convergent  for  $H^1$ initial datum (cf. $\vartheta=1$ in Fig.  \ref{fig:case2-L2}),
 which confirms the conclusion in Theorem \ref{thmlt}. Meanwhile, it is first-order convergent for  $\vartheta\ge2$, which is in line with Remark \ref{rem:better-order}.
(iii) The STSP is second-order convergent in $L^2$ norm for  $\vartheta\ge4$, while the CNFD recovers its second-order convergence only when $\vartheta\ge5$.
(iv) For all the methods, to recover their  classical orders of convergence, the initial data is required to be more regular by one additional order when errors are
measured in $H^1$ norm than in  $L^2$ norm.

\begin{figure}[htbp!]
\begin{center}
\includegraphics[width=3.0in,height=2.0in]{./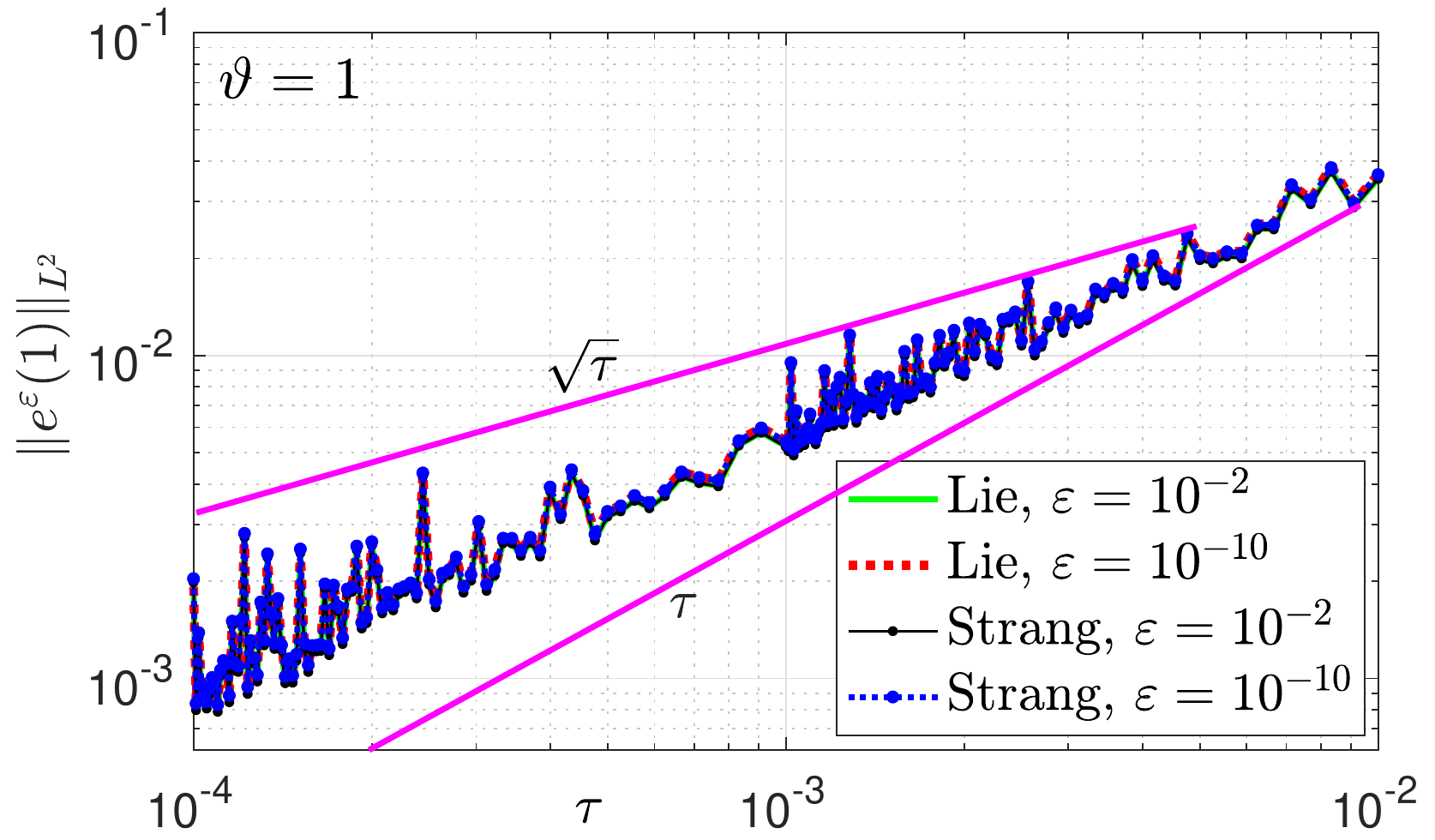}\hspace{0.2cm}
\includegraphics[width=3.0in,height=2.0in]{./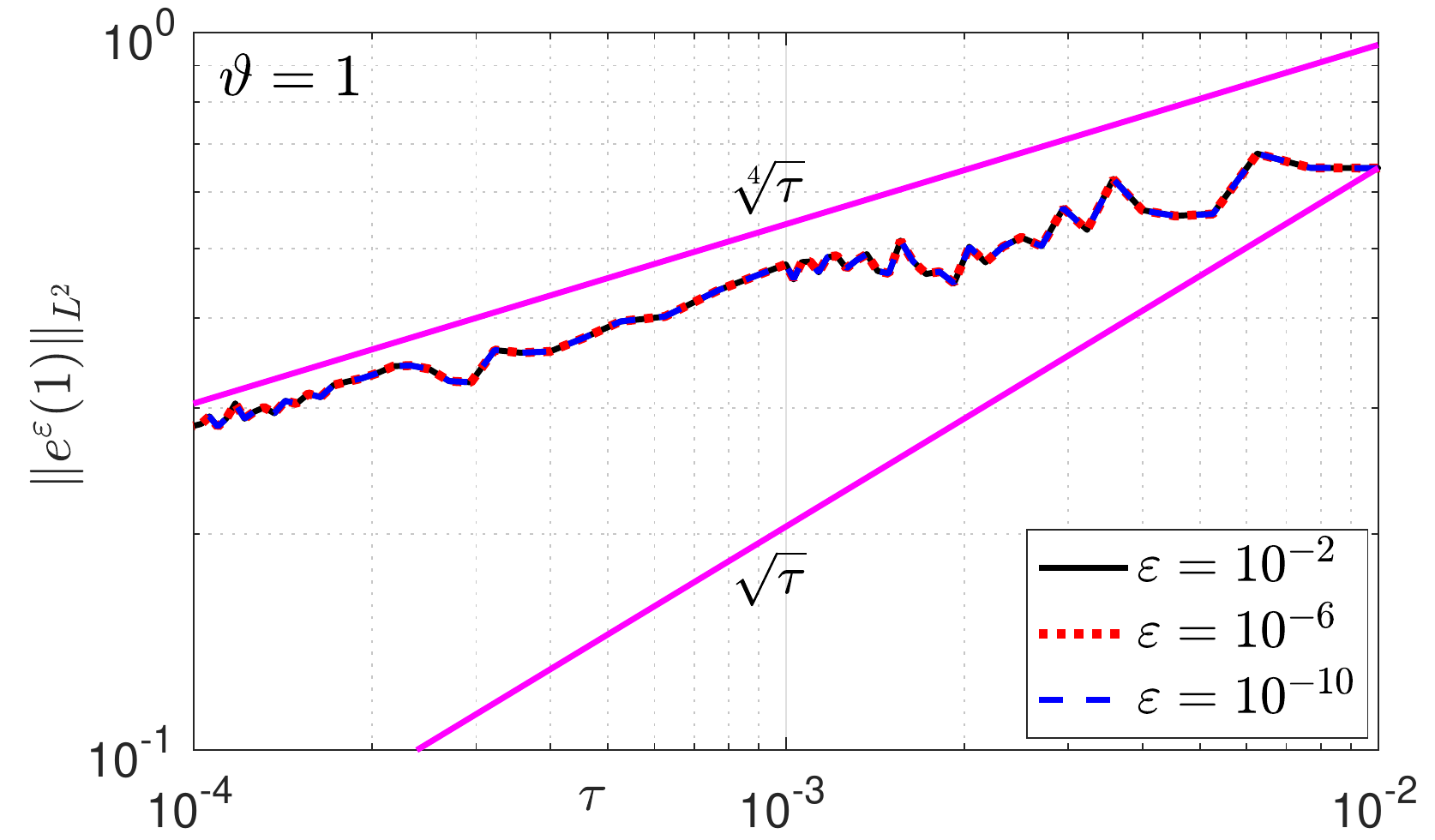}\\
\includegraphics[width=3.0in,height=2.0in]{./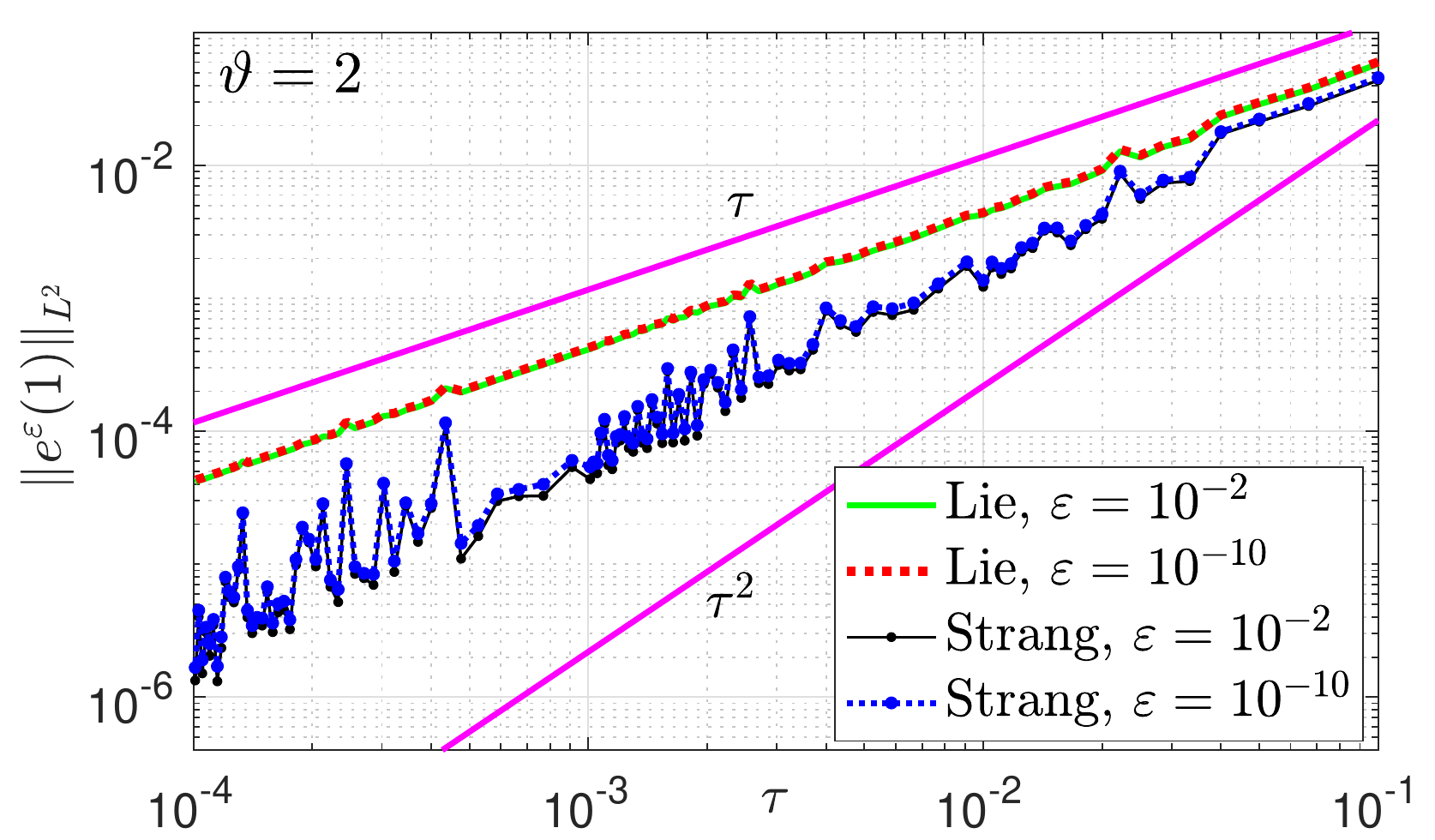}\hspace{0.2cm}
\includegraphics[width=3.0in,height=2.0in]{./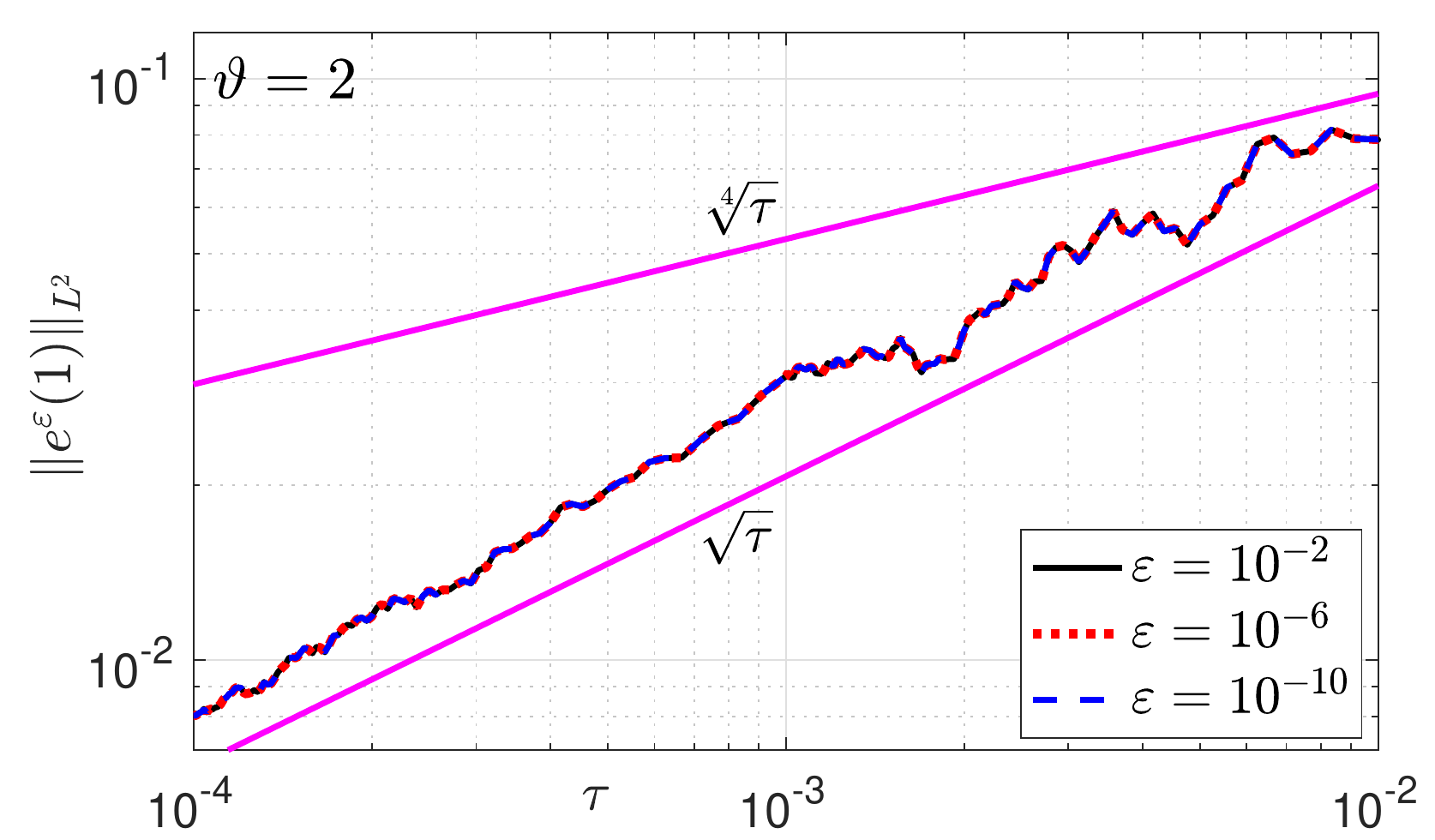}\\
\includegraphics[width=3.0in,height=2.0in]{./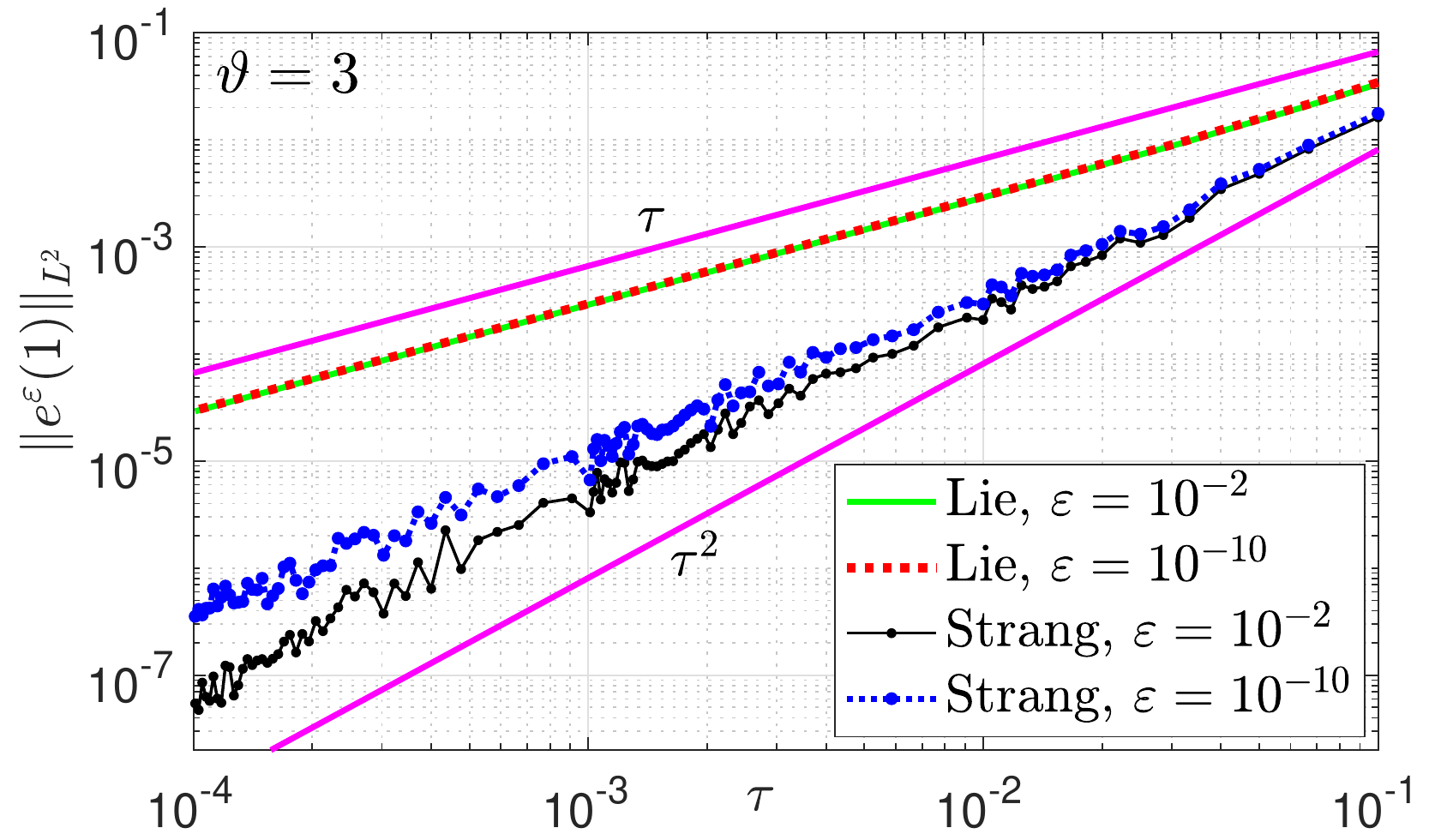}\hspace{0.2cm}
\includegraphics[width=3.0in,height=2.0in]{./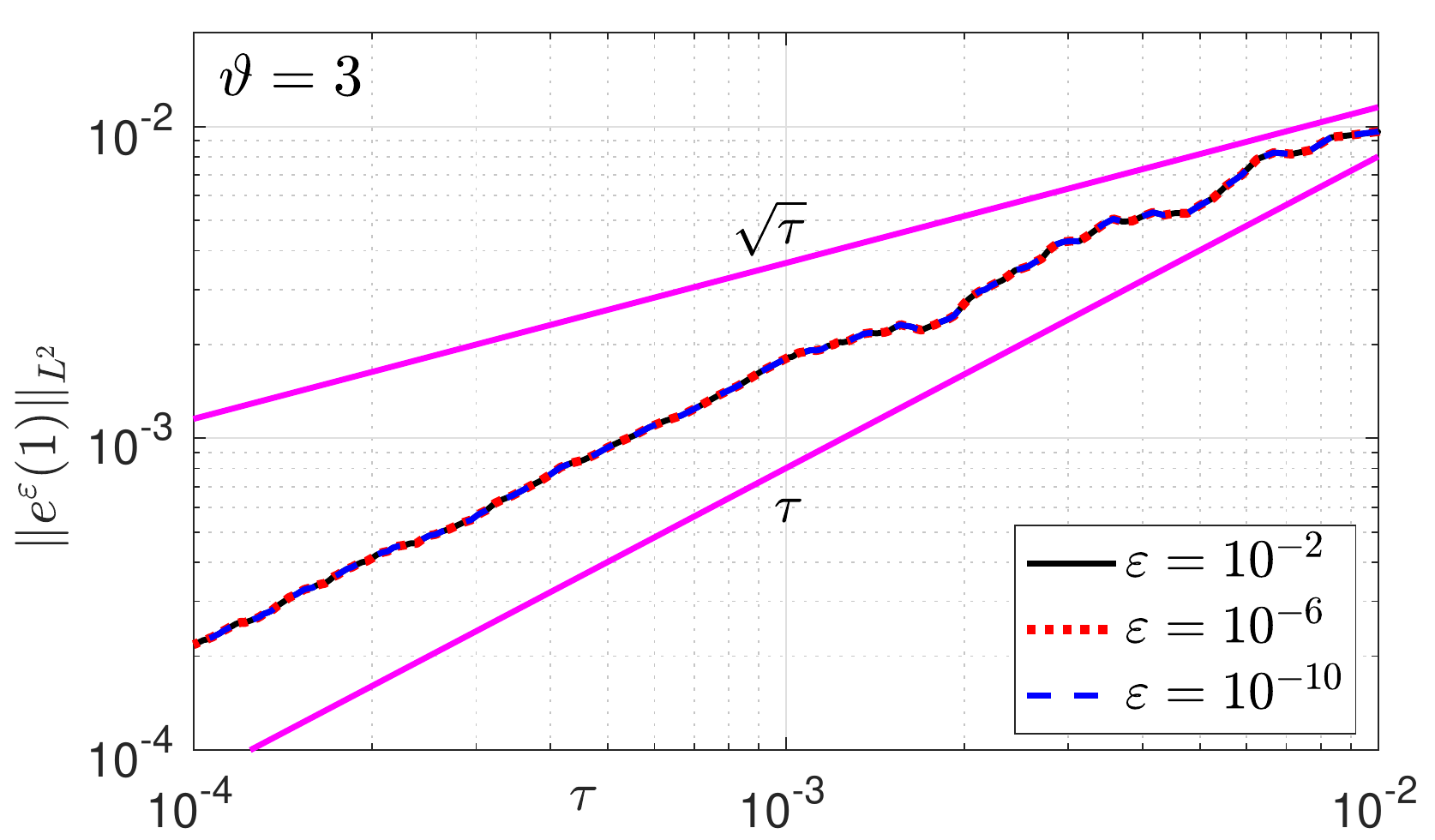}\\
\includegraphics[width=3.0in,height=2.0in]{./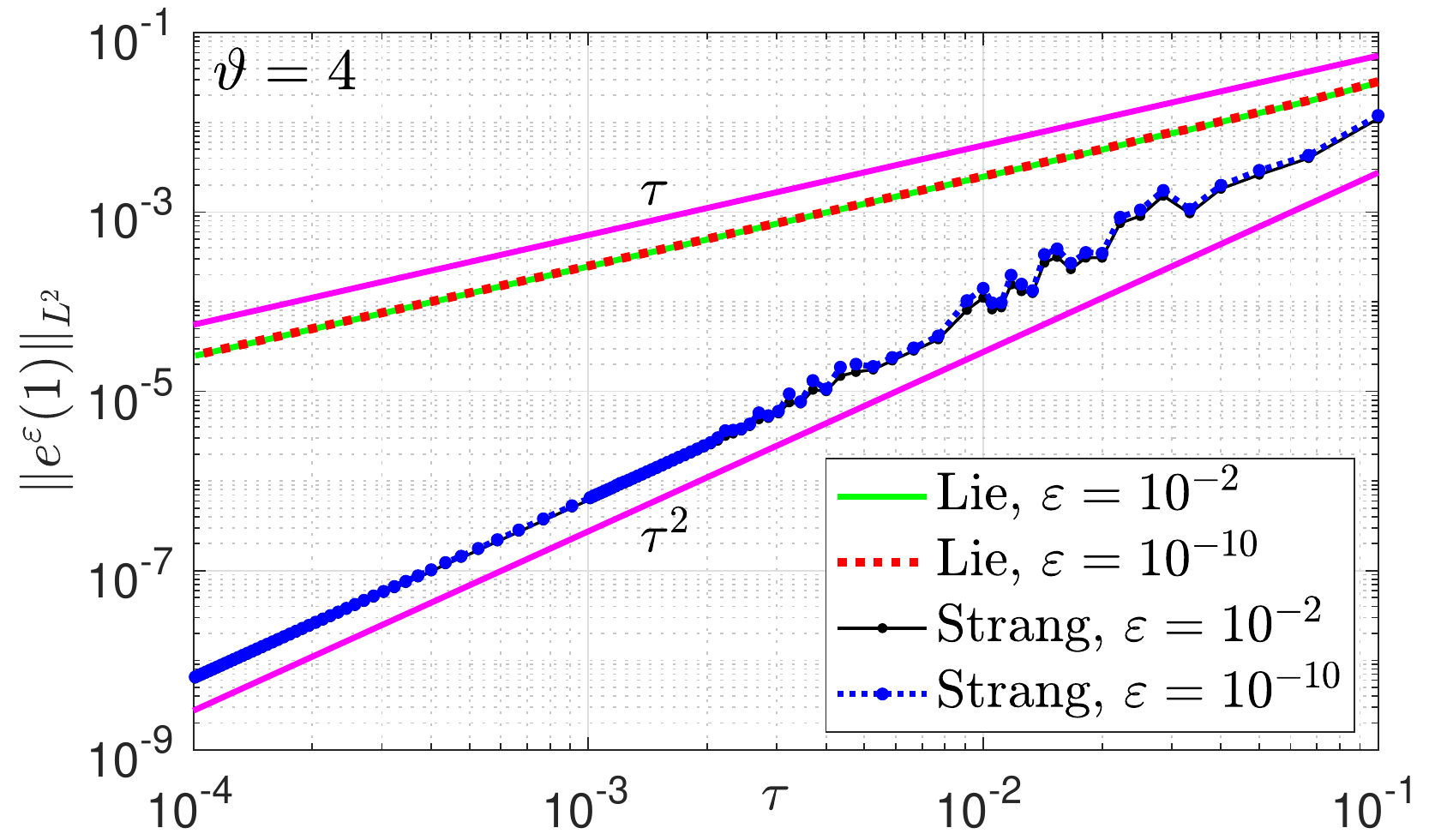}\hspace{0.2cm}
\includegraphics[width=3.0in,height=2.0in]{./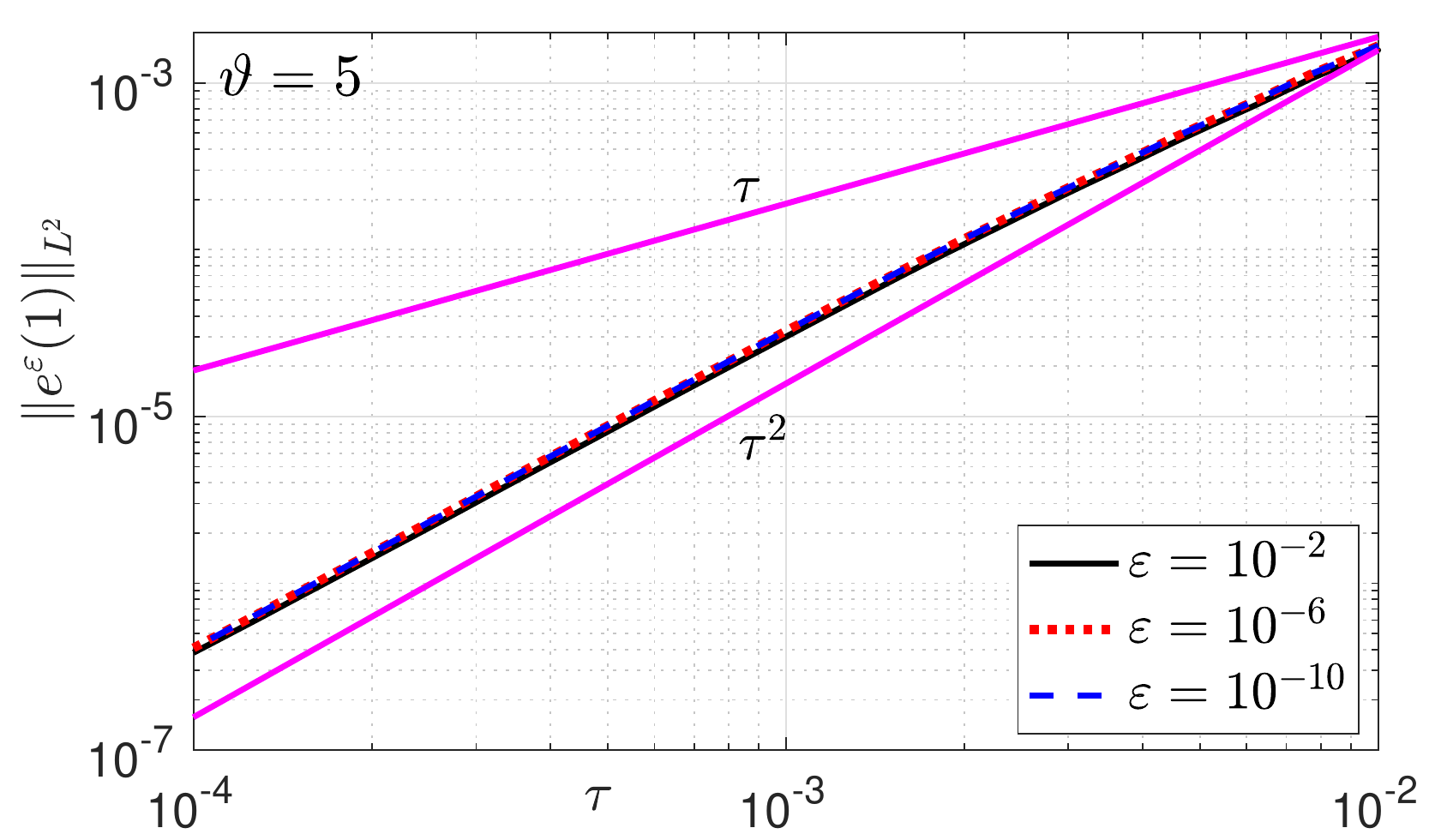}
\end{center}
\caption{{Errors $\|e^\ep(1)\|_{L^2}$ for the LTSP \& STSP (left) and CNFD (right)  in {\it Case II} for different
values of $\vartheta$ in \eqref{ini-data-case2}.}}
\label{fig:case2-L2}
\end{figure}

\begin{figure}[htbp!]
\begin{center}
\includegraphics[width=3.0in,height=2.0in]{./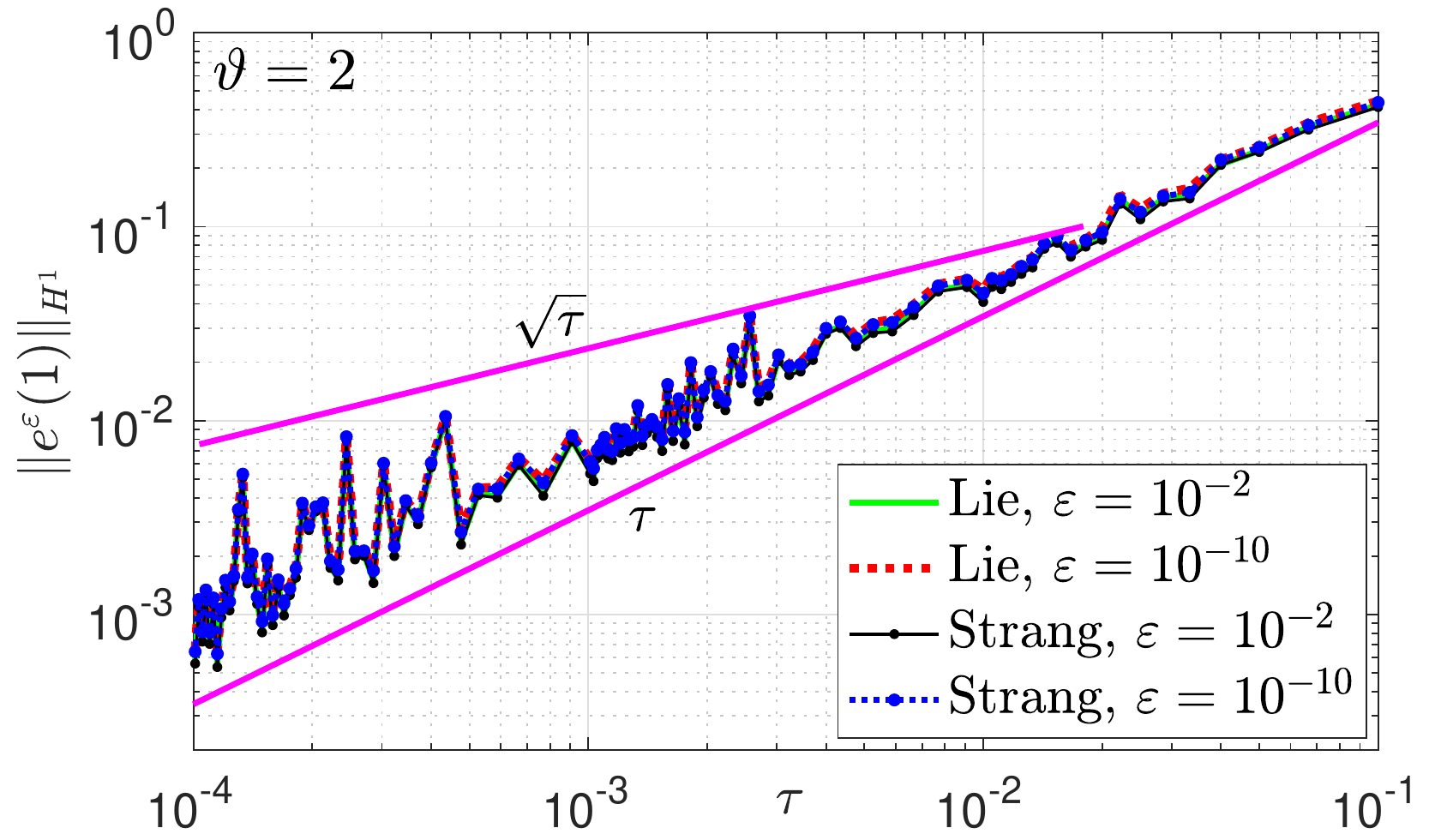}\hspace{0.2cm}
\includegraphics[width=3.0in,height=2.0in]{./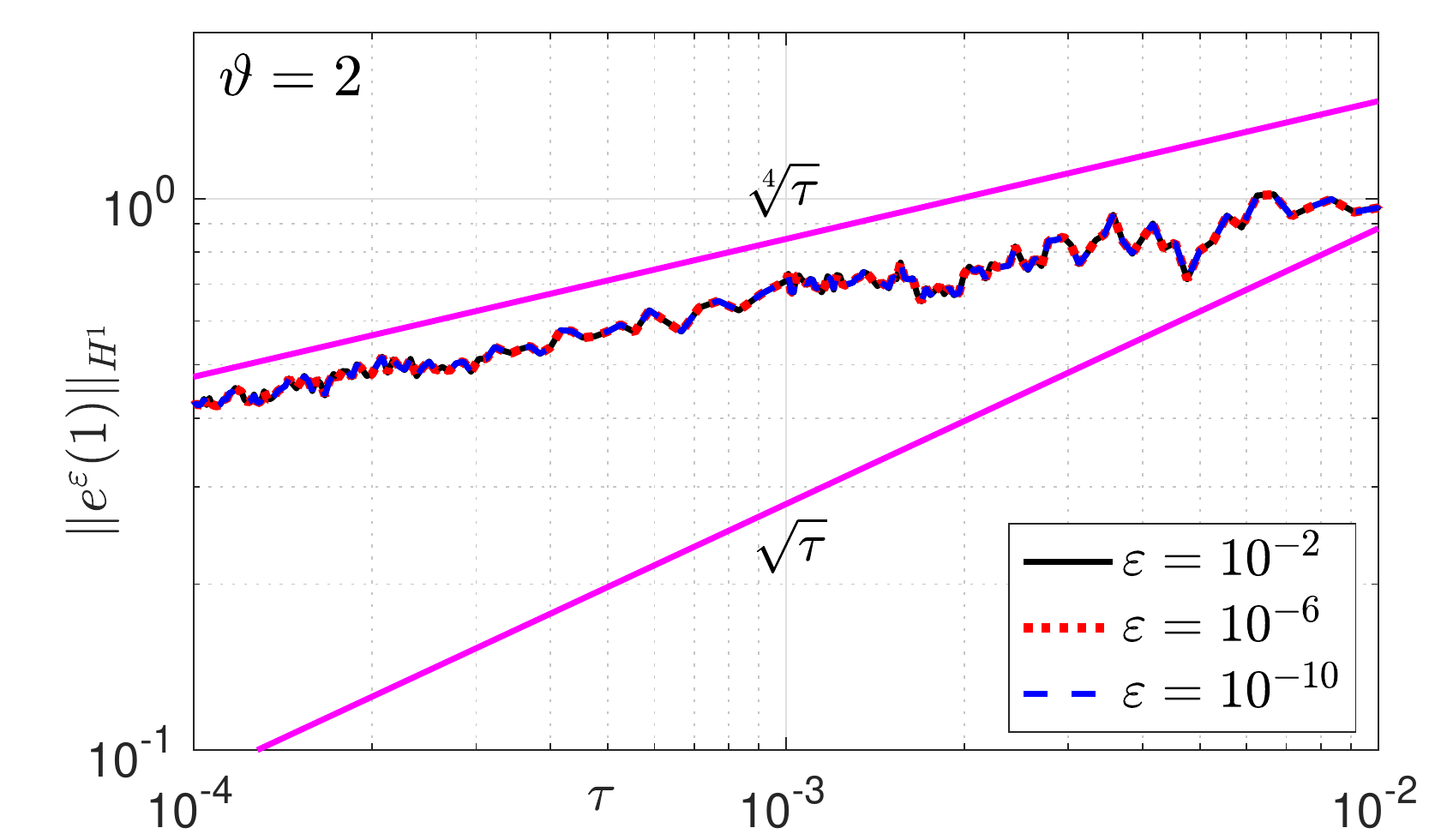}\\
\includegraphics[width=3.0in,height=2.0in]{./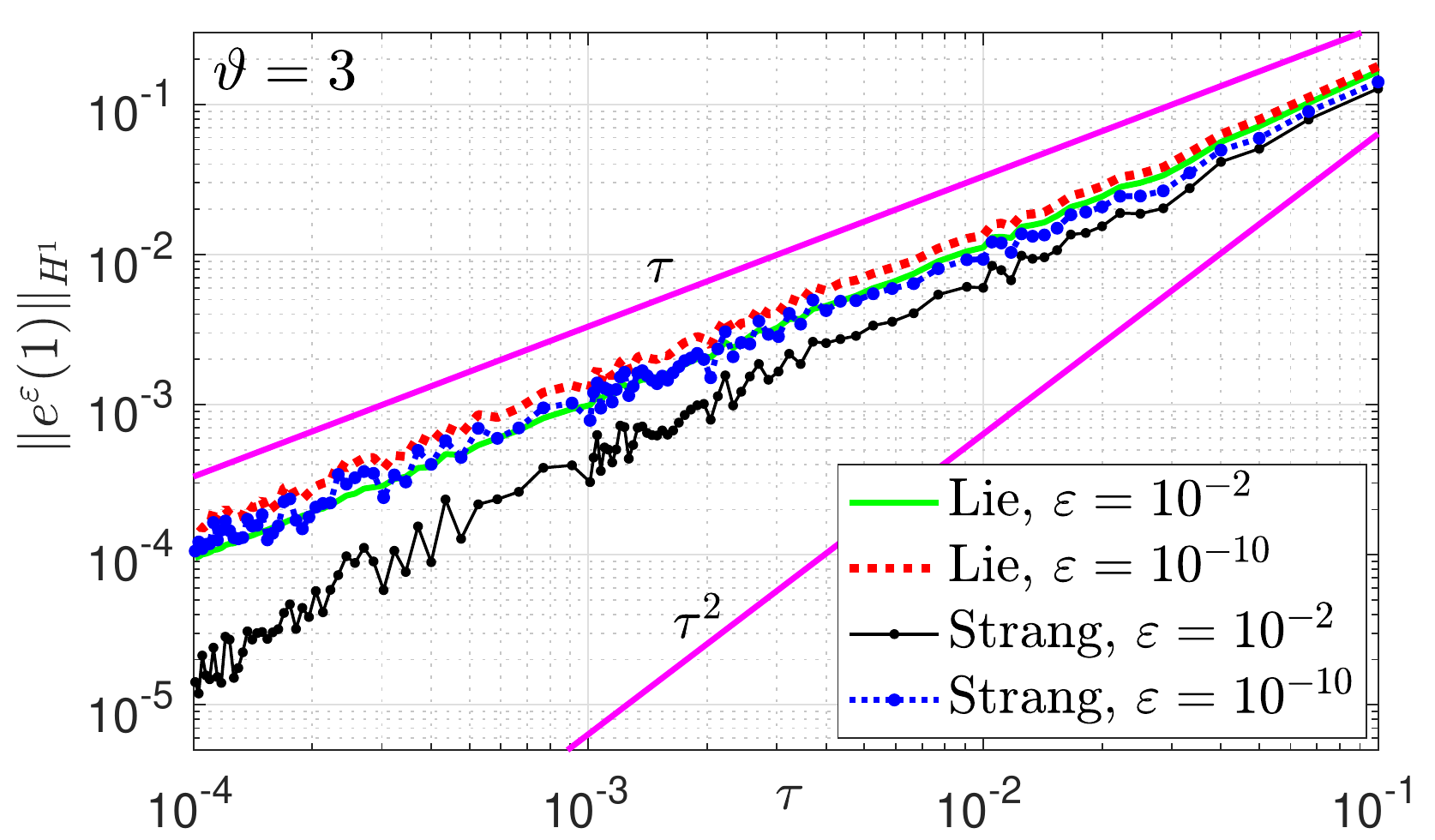}\hspace{0.2cm}
\includegraphics[width=3.0in,height=2.0in]{./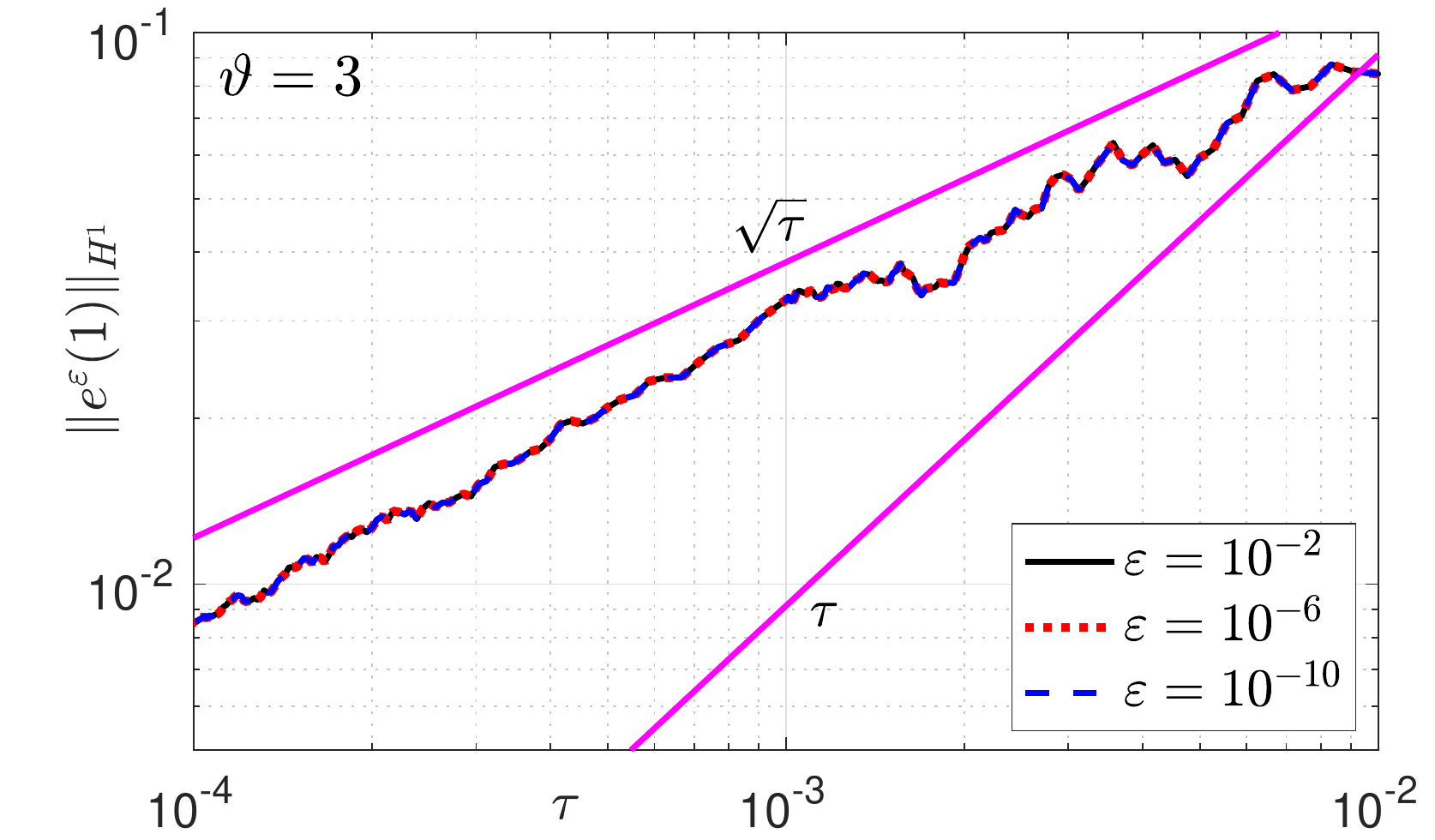}\\
\includegraphics[width=3.0in,height=2.0in]{./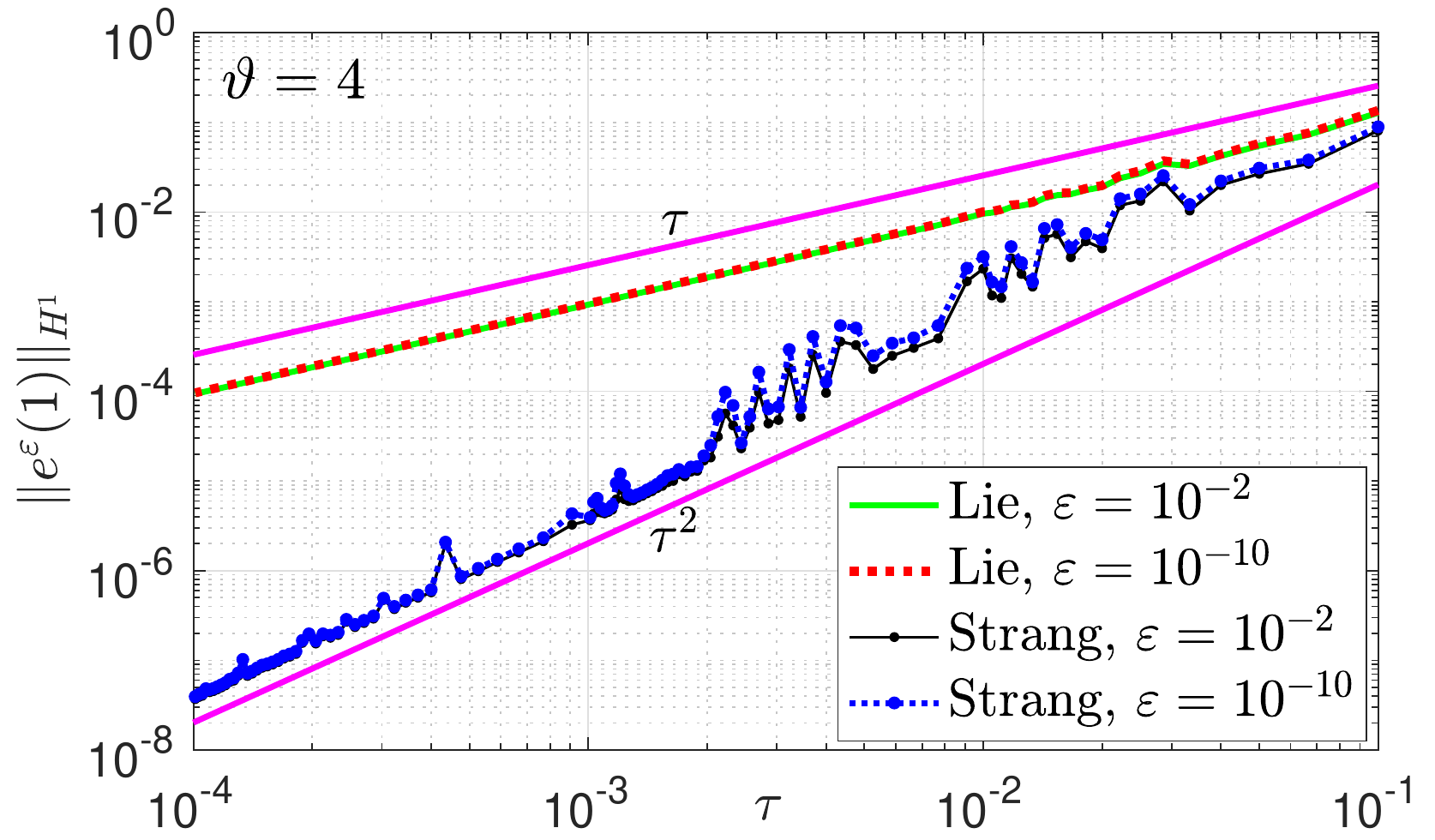}\hspace{0.2cm}
\includegraphics[width=3.0in,height=2.0in]{./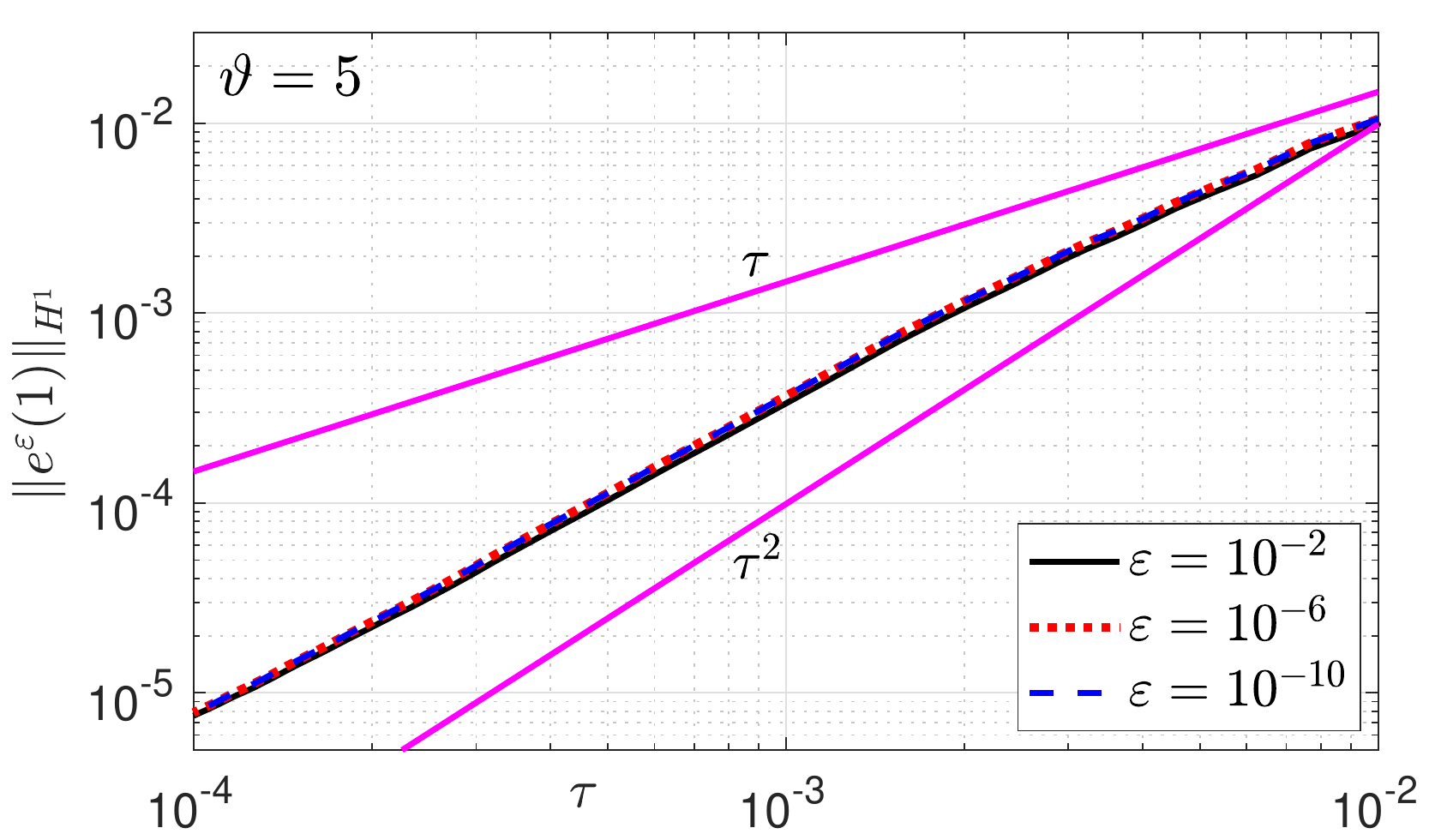}\\
\includegraphics[width=3.0in,height=2.0in]{./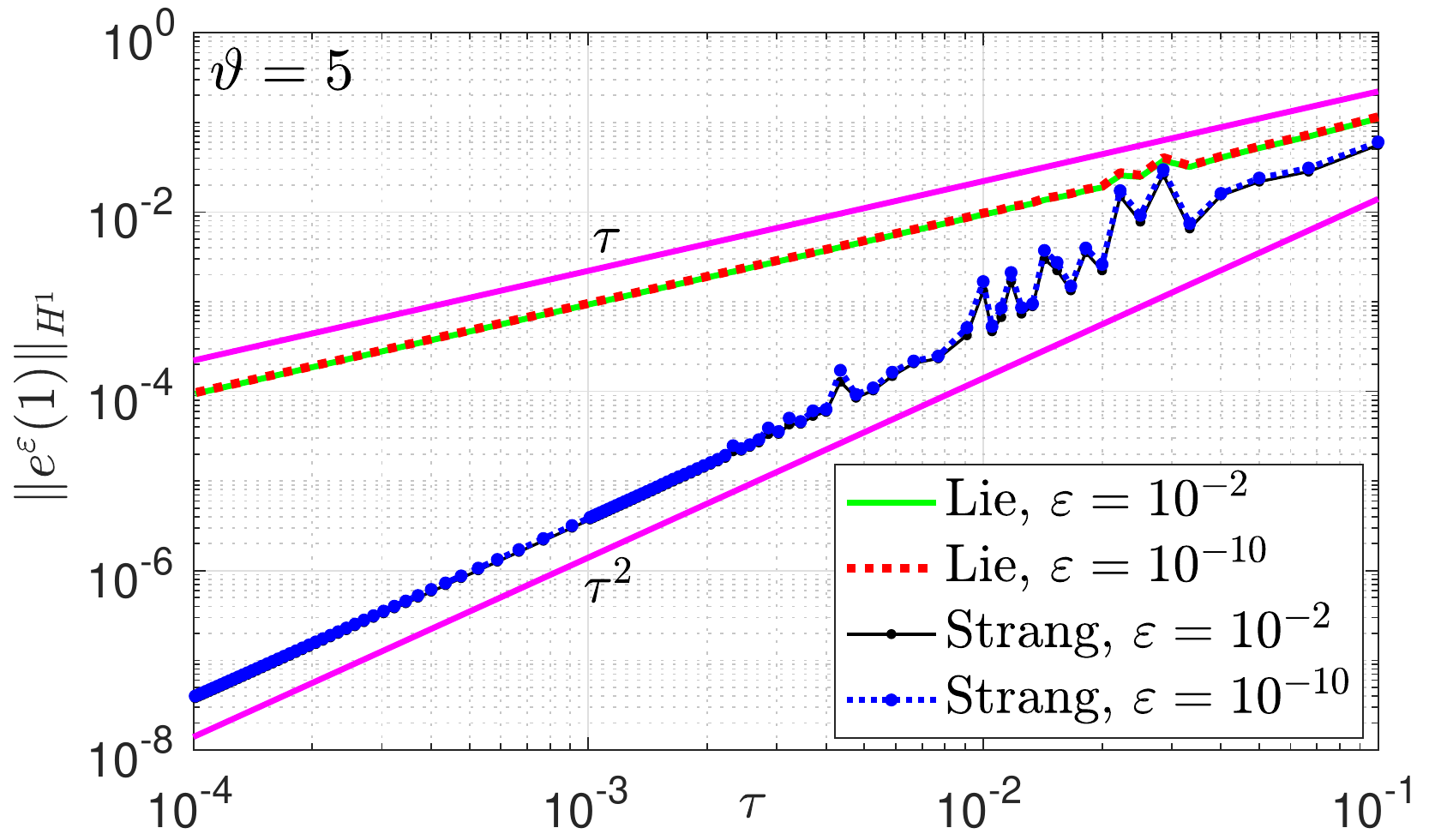}\hspace{0.2cm}
\includegraphics[width=3.0in,height=2.0in]{./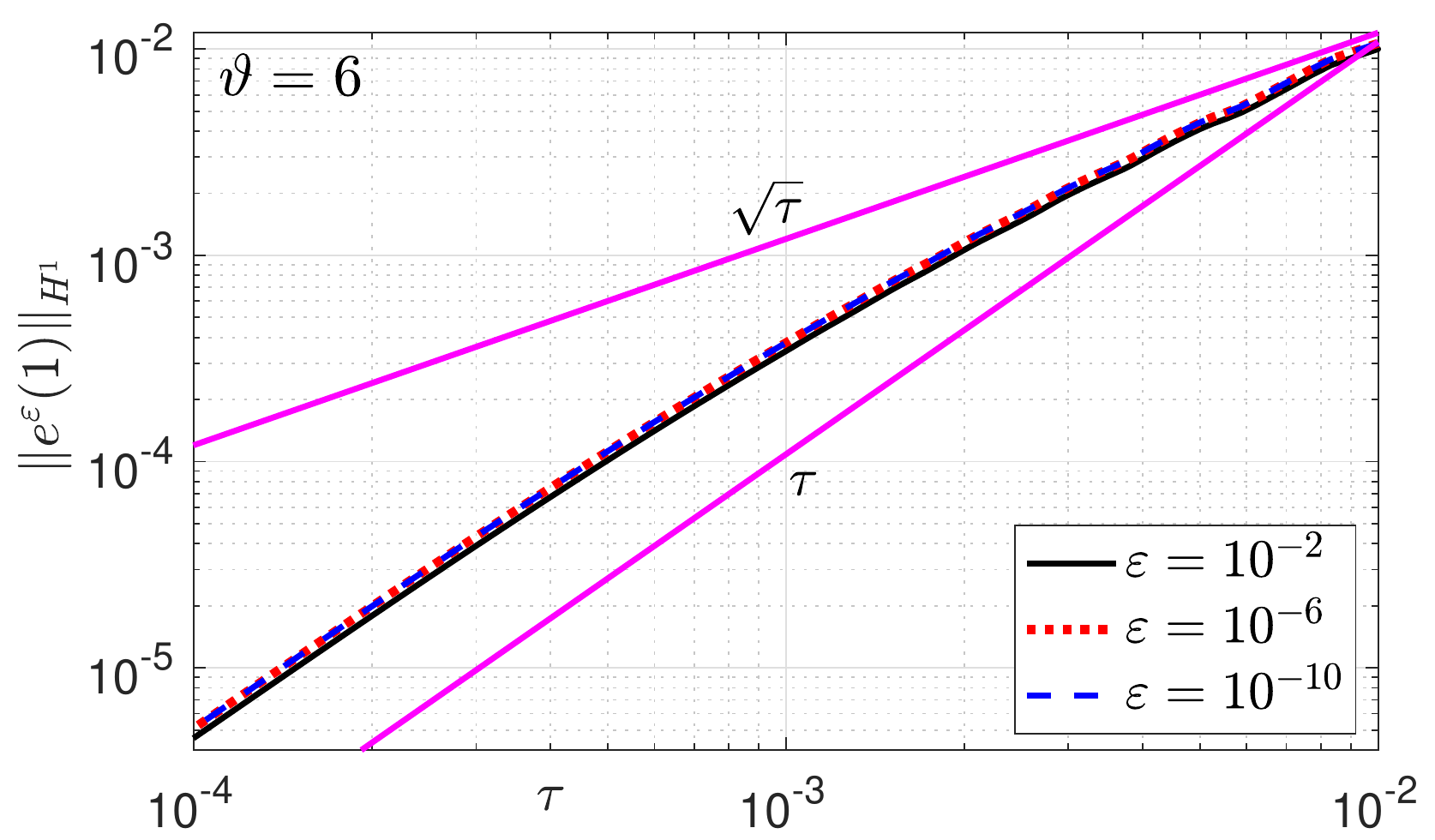}
\end{center}
\caption{{Errors $\|e^\ep(1)\|_{H^1}$ for the LTSP \& STSP (left) and CNFD (right) in {\it Case II} for different values
of $\vartheta$ in \eqref{ini-data-case2}.}}
\label{fig:case2-H1}
\end{figure}

\subsection{Applications for long time dynamics}
\label{sec:long-time}
In this section, we apply the STSP method to investigate long time dynamics of LogSE with Gaussian-type initial datum in 1D.
 To this end, we fix $\varepsilon=10^{-15}$, $\Og=[-L,L]$, $h=\fl{1}{16}$ and $\tau=0.001$. The initial data is chosen as
\be\label{ini_set}
u_0(x)=\sum_{k=1}^N b_k e^{-\fl{a_k}{2}(x-x_k)^2+iv_k x},\qquad x\in\mathbb{R},
\ee
where $b_k$, $a_k$, $x_k$ and $v_k$ are real constants,
i.e, the initial data is the sum of $N$ Gaussons \eqref{ini-gaus} with velocity $v_k$ and initial location $x_k$.

\medskip

\noindent{\bf Example 3}. Here, we let $\lambda=-1$, $L=1000$.  We set $N=2$
 in \eqref{ini_set}
and consider the following cases:
\begin{itemize}
\item[](i).    $x_1=-x_2=-5$, $v_k=0$, $b_k=a_k=1$ ($k=1,2$);
\item[] (ii).   $x_1=-x_2=-3$,  $v_k=0$,    $b_k=a_k=1$ ($k=1,2$);
\item[] (iii). $v_1=v_2=2$,  $x_1=-x_2=-30$,   $b_k=a_k=1$ ($k=1,2$);
\item[] (iv).  $v_1=v_2=15$,  $x_1=-x_2=-30$,   $b_k=a_k=1$ ($k=1,2$);
\item[] (v). $v_1=1$,  $v_2=0$, $x_1=-40$,   $x_2=0$,   $b_2=2b_1=1$,   $a_k=1$ ($k=1,2$);
\item[] (vi). $v_1=4$,  $v_2=0$, $x_1=-40$,    $x_2=0$,   $b_2=2b_1=1$,   $a_k=1$ ($k=1,2$);
\item[] (vii).  $v_1=25$, $v_2=0$,   $x_1=-100$, $x_2=0$,    $b_2=2b_1=1$,  $a_k=1$ ($k=1,2$);
\item[] (viii). $v_1=10$, $v_2=0$,     $x_1=-50$,  $x_2=30$, $b_2=2b_1=1$,  $a_1=1.2$,  $a_2=0.8$.
\end{itemize}
Fig. \ref{fig:ex3-casei-iv} shows the evolution of $\sqrt{|u^\ep(x,t)|}$, $E^\varepsilon_{\rm kin}(t)$, $E^\varepsilon_{\rm int}(t)$ and $E^\varepsilon(t)$ as well as the plot of $|u^\ep(x,t)|$ at different time for
 Cases (i)-(iv).  While Fig. \ref{fig:ex3-casev-viii} illustrates those for  Cases (v)-(viii). Here, the kinetic  energy $E^\varepsilon_{\rm kin}$
 and interaction energy $E^\varepsilon_{\rm int}$
 are defined  as:
 \begin{align*}
&  E^\ep_{\rm kin} (t)= \int_{\Omega}\big |\nabla u^\ep(\bx,t)|^2 d\bx,\qquad\quad
E^\ep_{\rm int} (t)= E^\ep (t)-E^\ep_{\rm kin} (t).
\end{align*}

From these figures we can see that:
(1) The total energy is well conserved.
(2) For static Gaussons (i.e., $v_k=0$ in \eqref{ini_set}), if they were initially well-separated,
the two Gaussons will stay stable as separated static Gaussons (cf. Fig. \ref{fig:ex3-casei-iv} Case i) with density profile unchanged.
When they get closer, the two Gaussons  contact and undergo attractive interactions.  They move to each other, collide and stick together later.
Shortly, the two Gaussons  separate and swing like pendulum. Small outgoing solitary waves are emitted during the separation of the Gaussons.  This wave-emitting  `pendulum motion' becomes faster as time goes on (cf. Fig. \ref{fig:ex3-casei-iv} Case ii).
(3) For moving Gaussons, the two Gaussons  are transmitted  completely through each other and move separately at last.
Before they meet, the two Gaussons basically  move at constant velocities and preserve their profiles in density exactly if $a_k=\lambda$ (While they will move like
breathers if $a_k\ne\lambda$ (cf. Fig. \ref{fig:ex3-casev-viii} Case viii)).
During the interaction, there occurs  oscillation. Generally, the larger the relative velocity between the two Gaussons is,
the stronger the oscillation is (cf. Fig. \ref{fig:ex3-casei-iv} Cases iii-iv \& Fig. \ref{fig:ex3-casev-viii} Cases vii-viii). After
collision, the velocities of Gaussons change (cf. Fig. \ref{fig:ex3-casev-viii} Cases v-vi).  The two Gaussons  oscillate like breathers and
separate completely at last (cf. Fig. \ref{fig:ex3-casei-iv} Cases iii-iv \& Fig. \ref{fig:ex3-casev-viii} Cases vii-viii).
In addition, small waves are emitted if the relative velocity is small (cf. Fig. \ref{fig:ex3-casei-iv} Cases ii-iv and Fig. \ref{fig:ex3-casev-viii} Cases v-vi \& viii).
(4) For two Gaussons with large relative velocity, their dynamics and interaction  are similar to those of the bright solitons in the cubic nonlinear Schr\"{o}dinger equation \cite{BTX13}.
 While this is not true for the Gaussons with small relative velocity, in which case additional solitary waves are emitted after collision in the LogSE.

\begin{figure}[htbp]
\begin{center}
\includegraphics[width=2.2in,height=1.7in]{./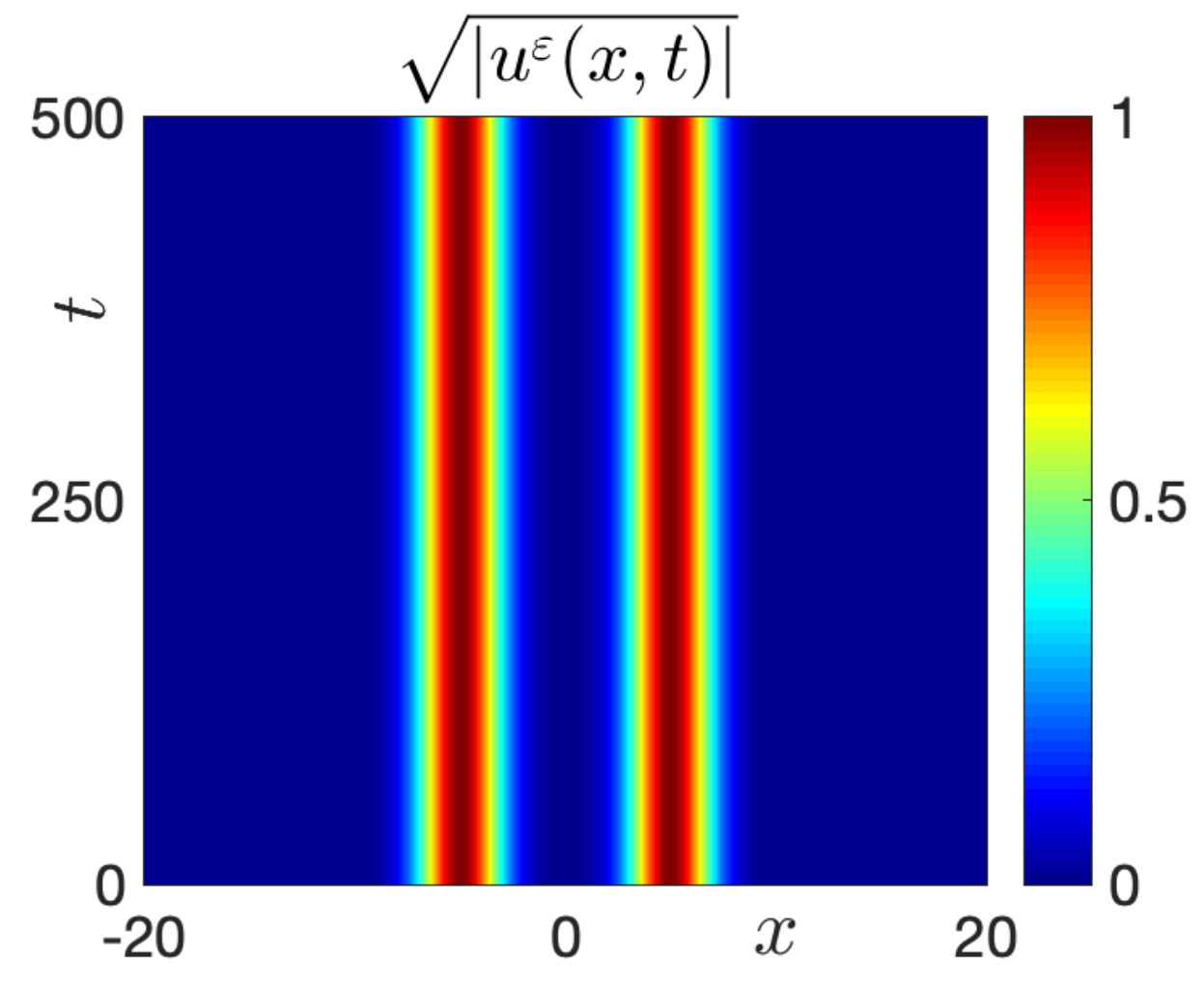}
\hspace{0.2cm}
\includegraphics[width=1.7in,height=1.55in]{./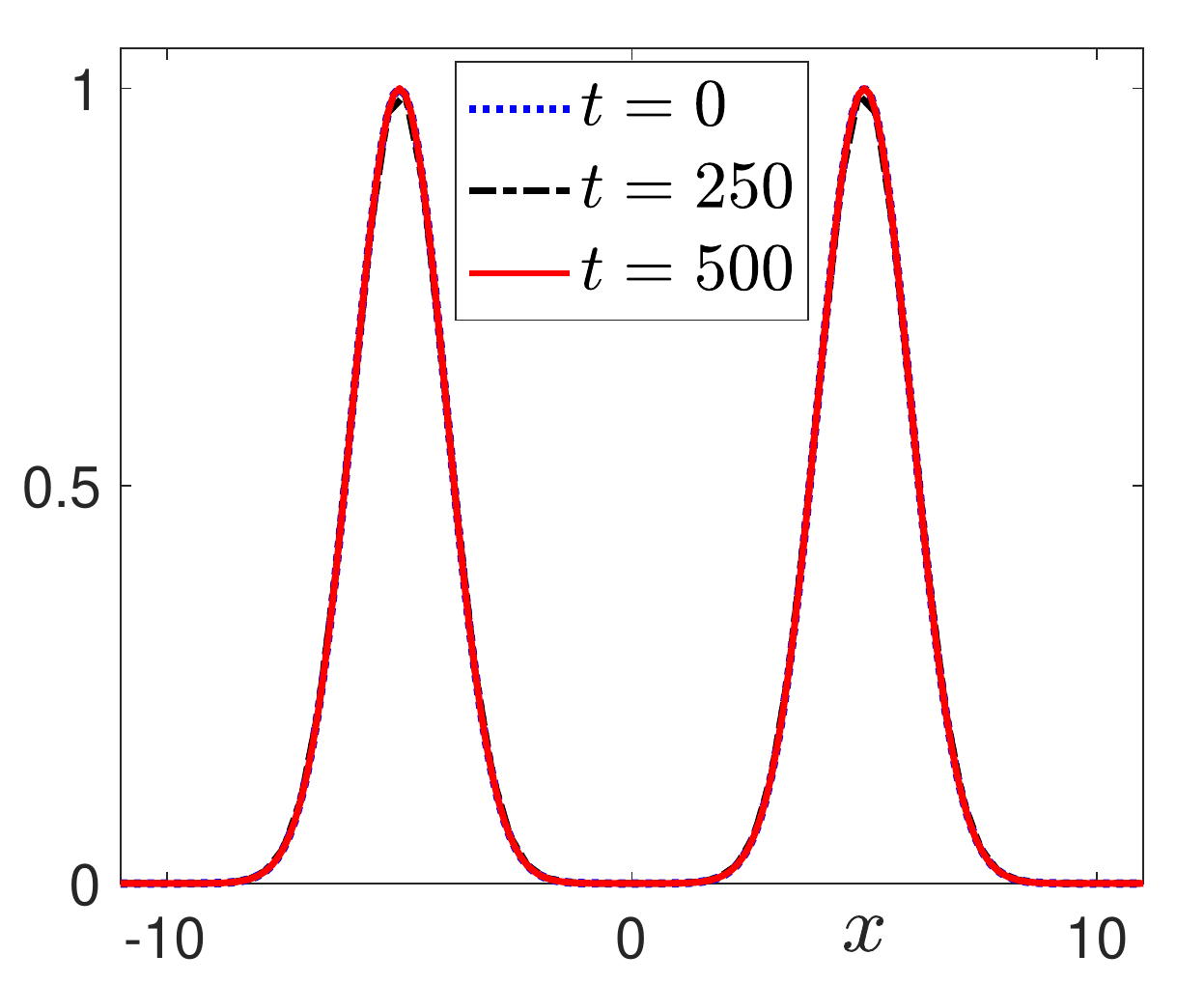}
\hspace{0.2cm}
\includegraphics[width=1.7in,height=1.55in]{./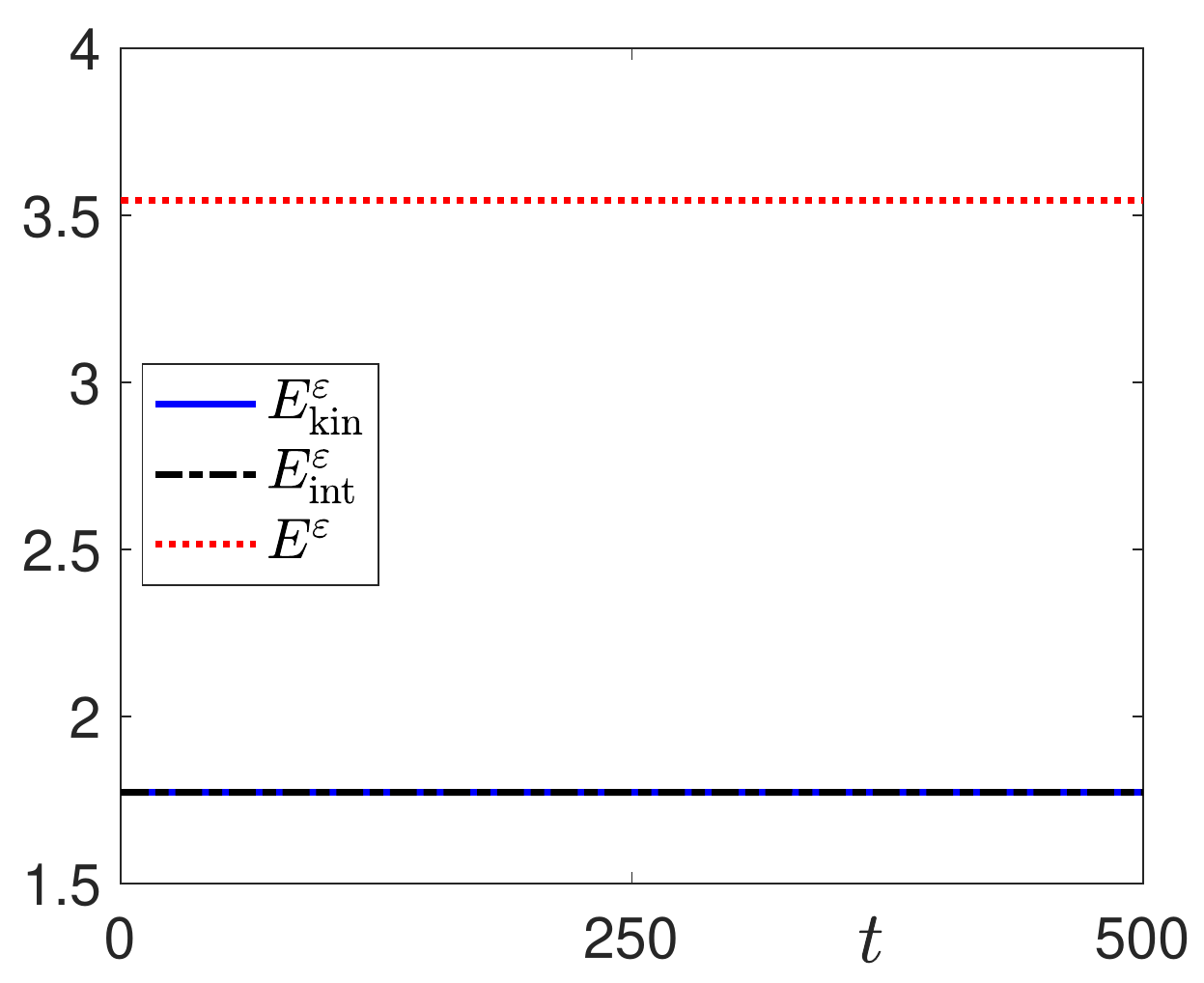}\\
\vspace{0.3cm}
\includegraphics[width=2.2in,height=1.7in]{./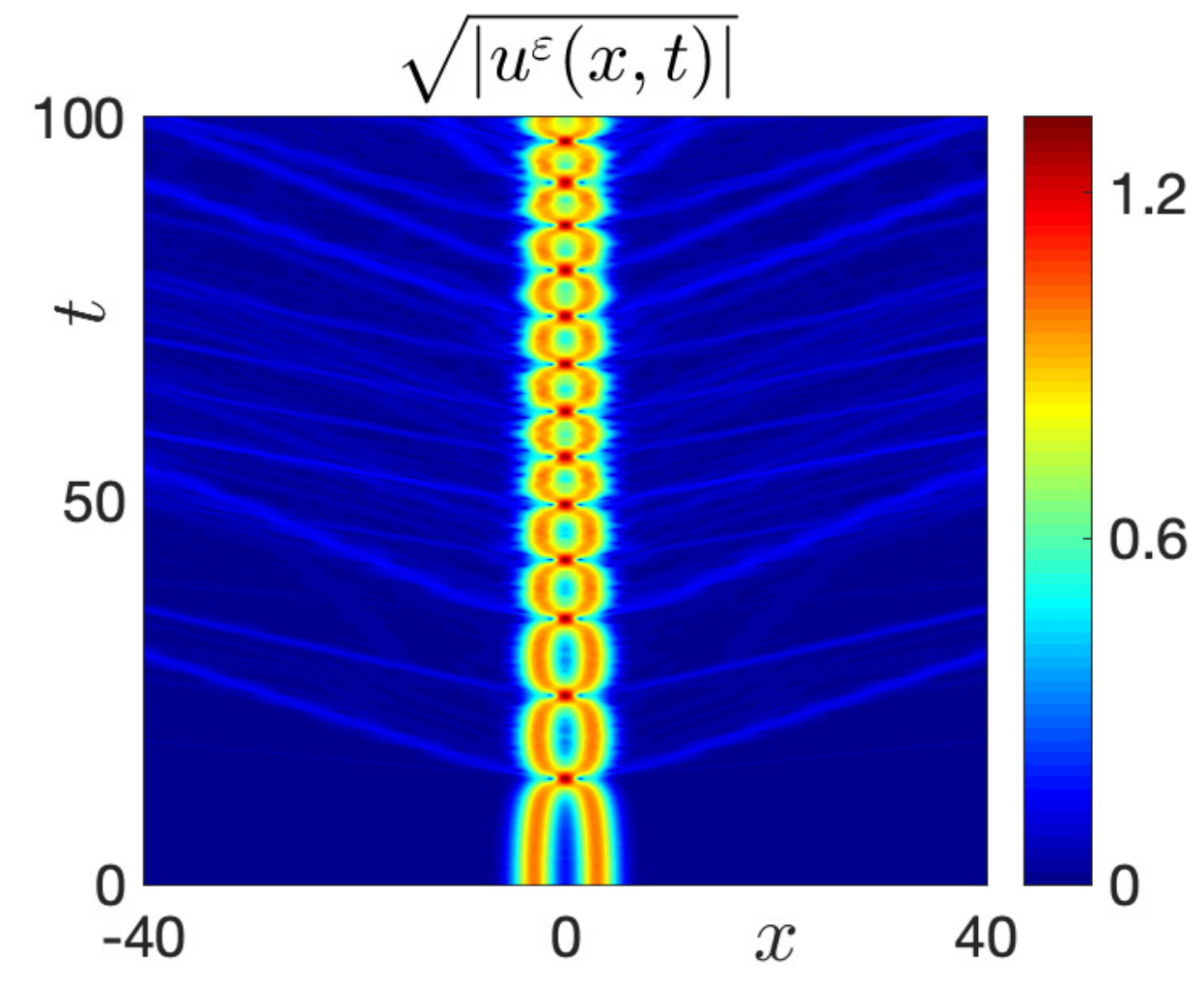}
\hspace{0.2cm}
\includegraphics[width=1.7in,height=1.55in]{./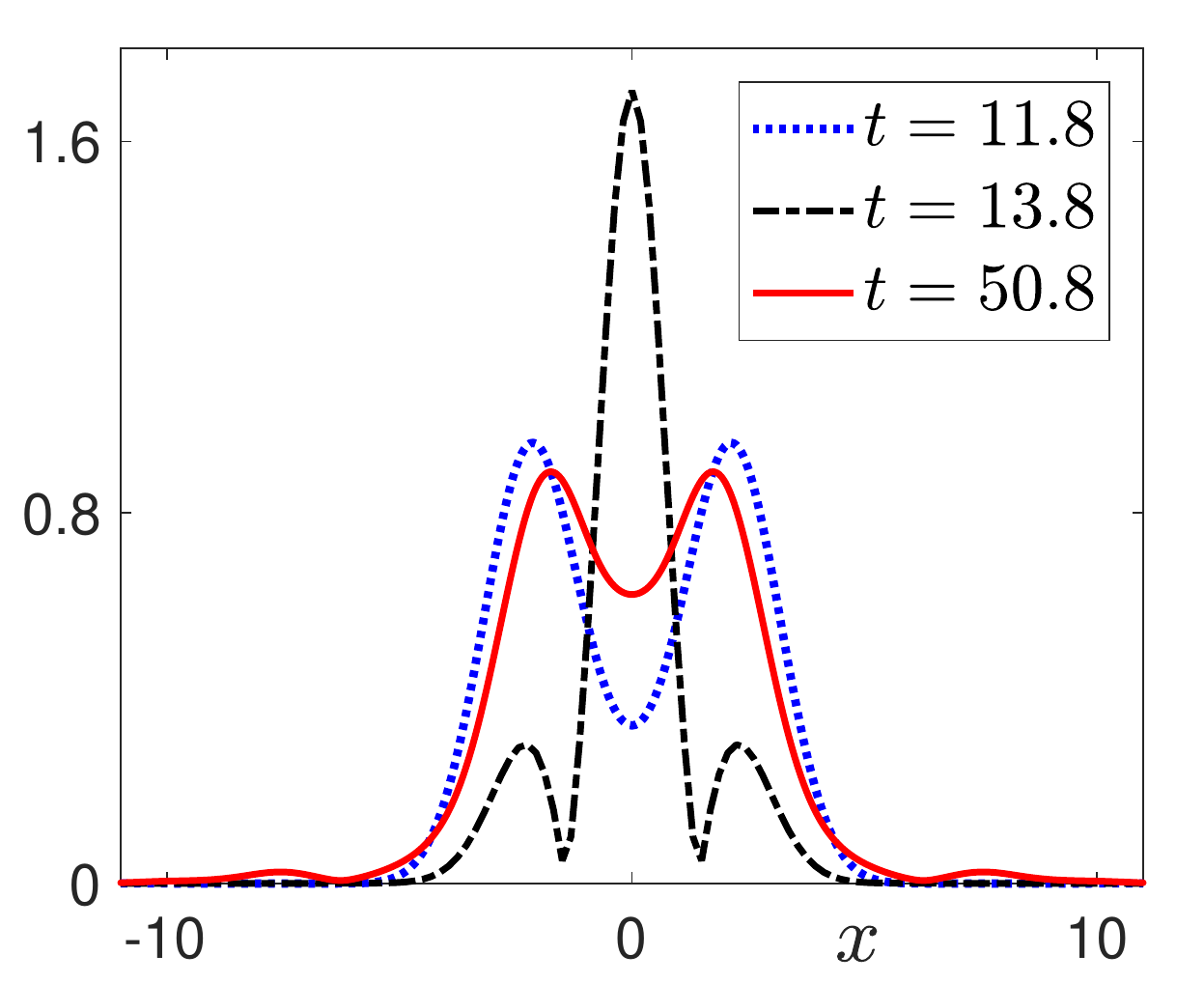}
\hspace{0.2cm}
\includegraphics[width=1.7in,height=1.55in]{./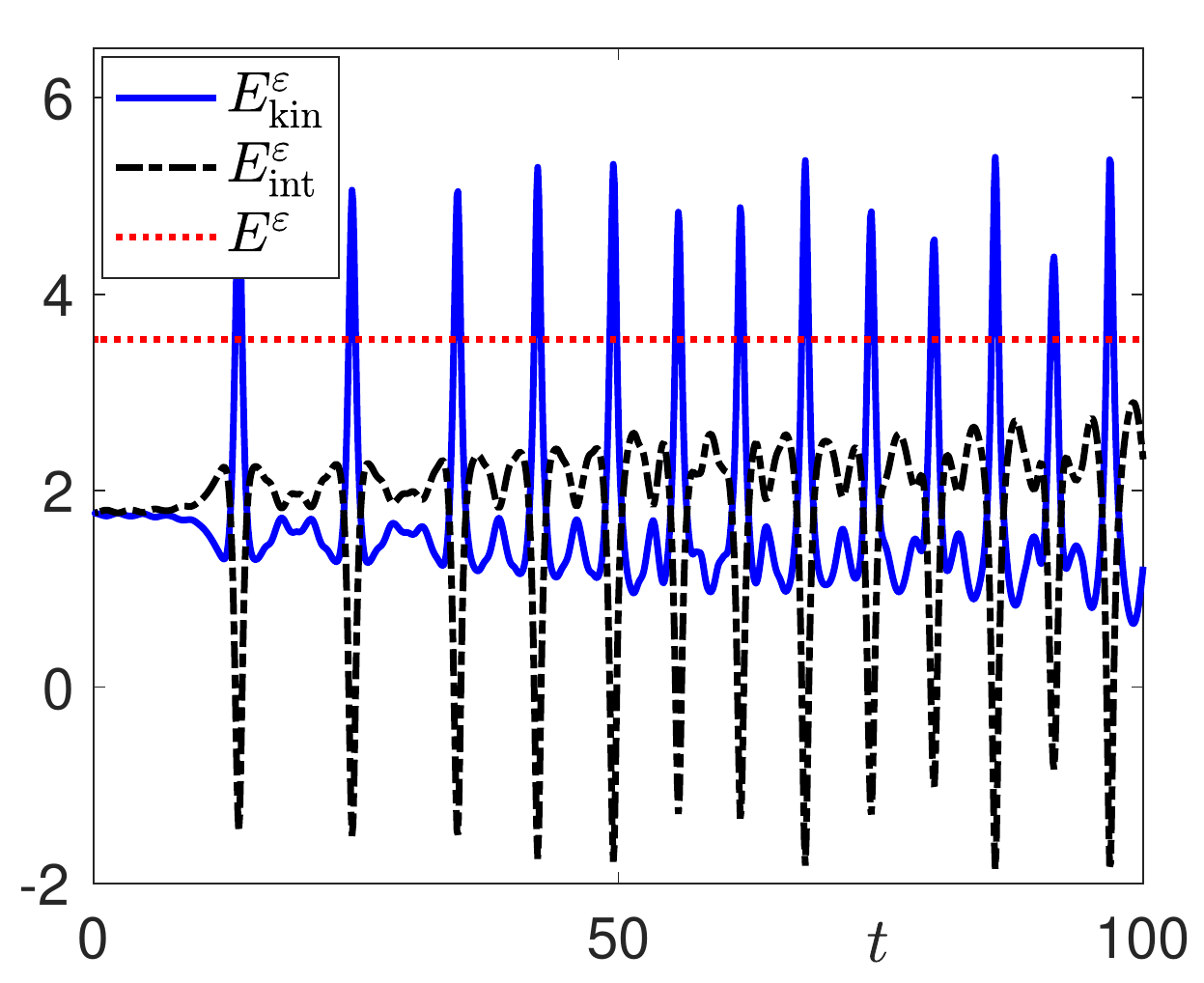}\\
\vspace{0.3cm}
\includegraphics[width=2.2in,height=1.7in]{./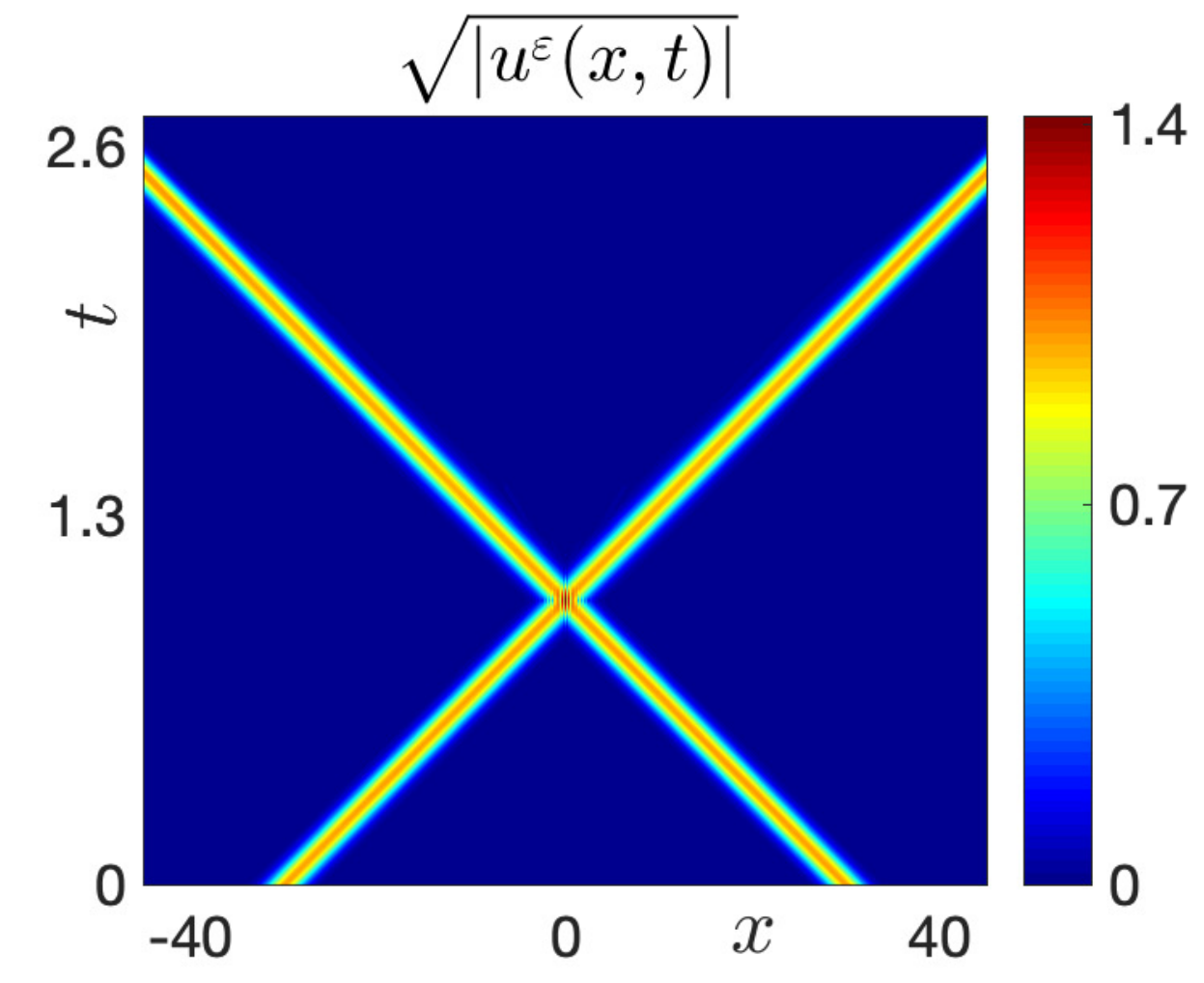}
\hspace{0.2cm}
\includegraphics[width=1.7in,height=1.55in]{./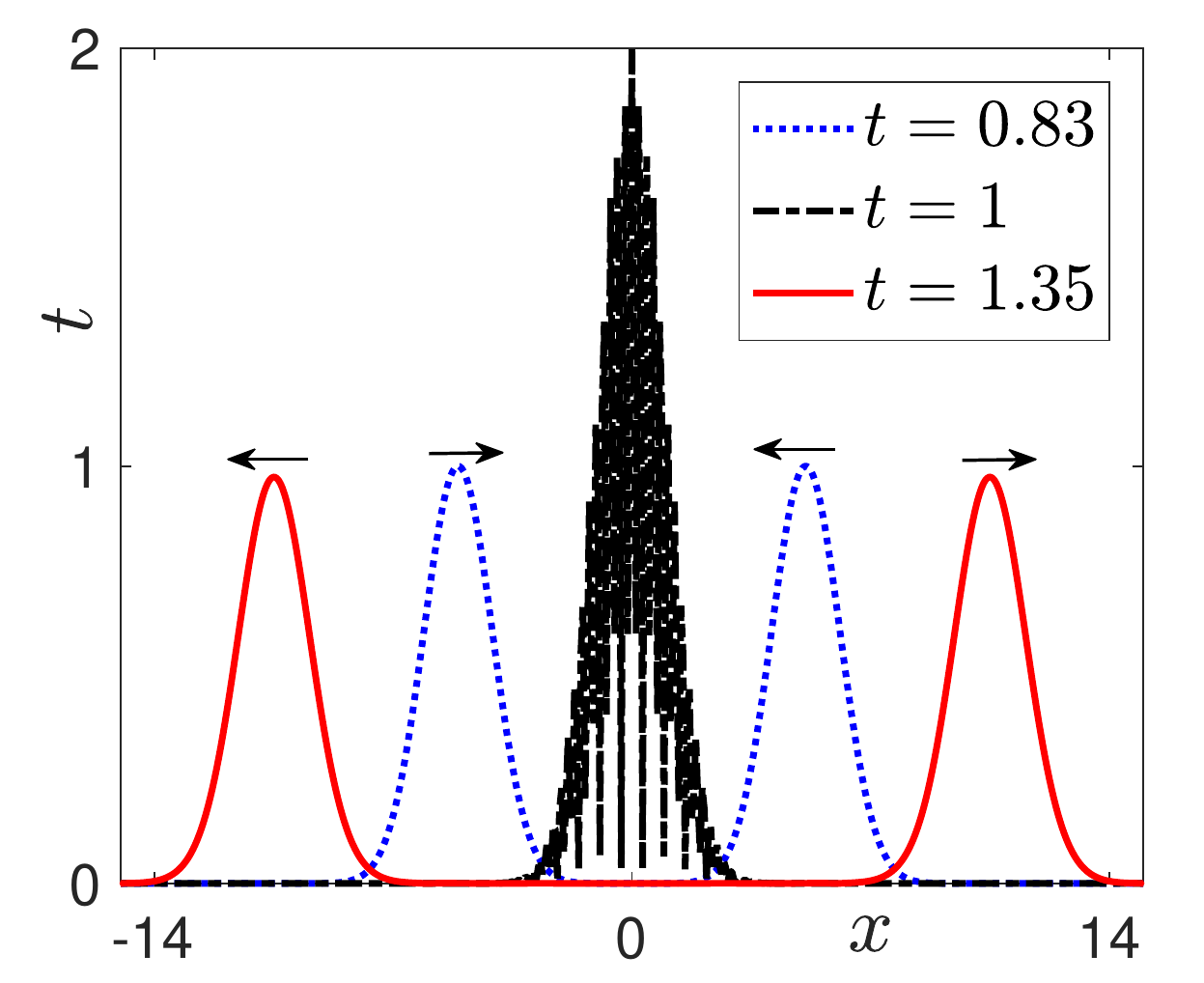}
\hspace{0.2cm}
\includegraphics[width=1.7in,height=1.55in]{./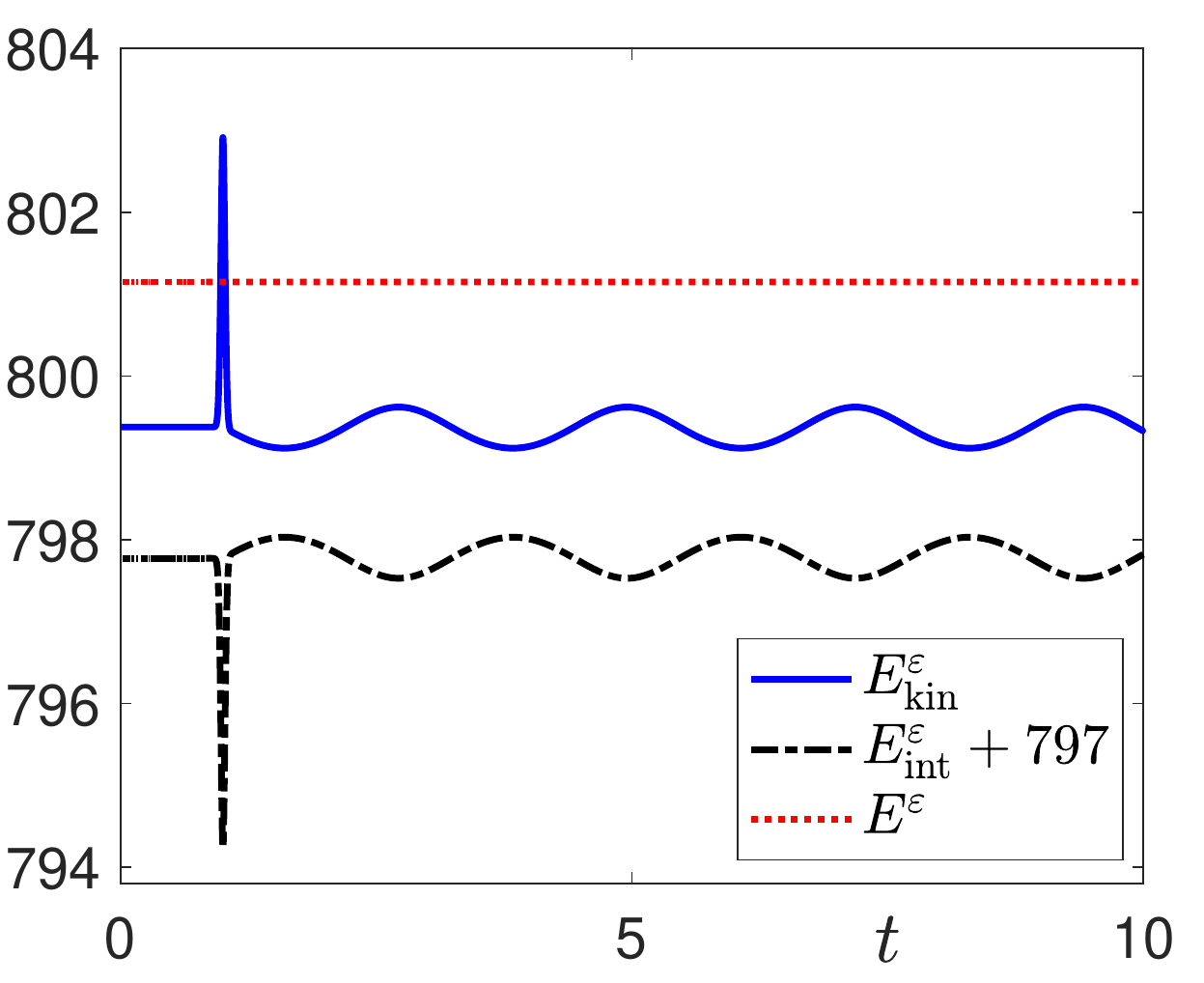}\\
\vspace{0.3cm}
\includegraphics[width=2.2in,height=1.7in]{./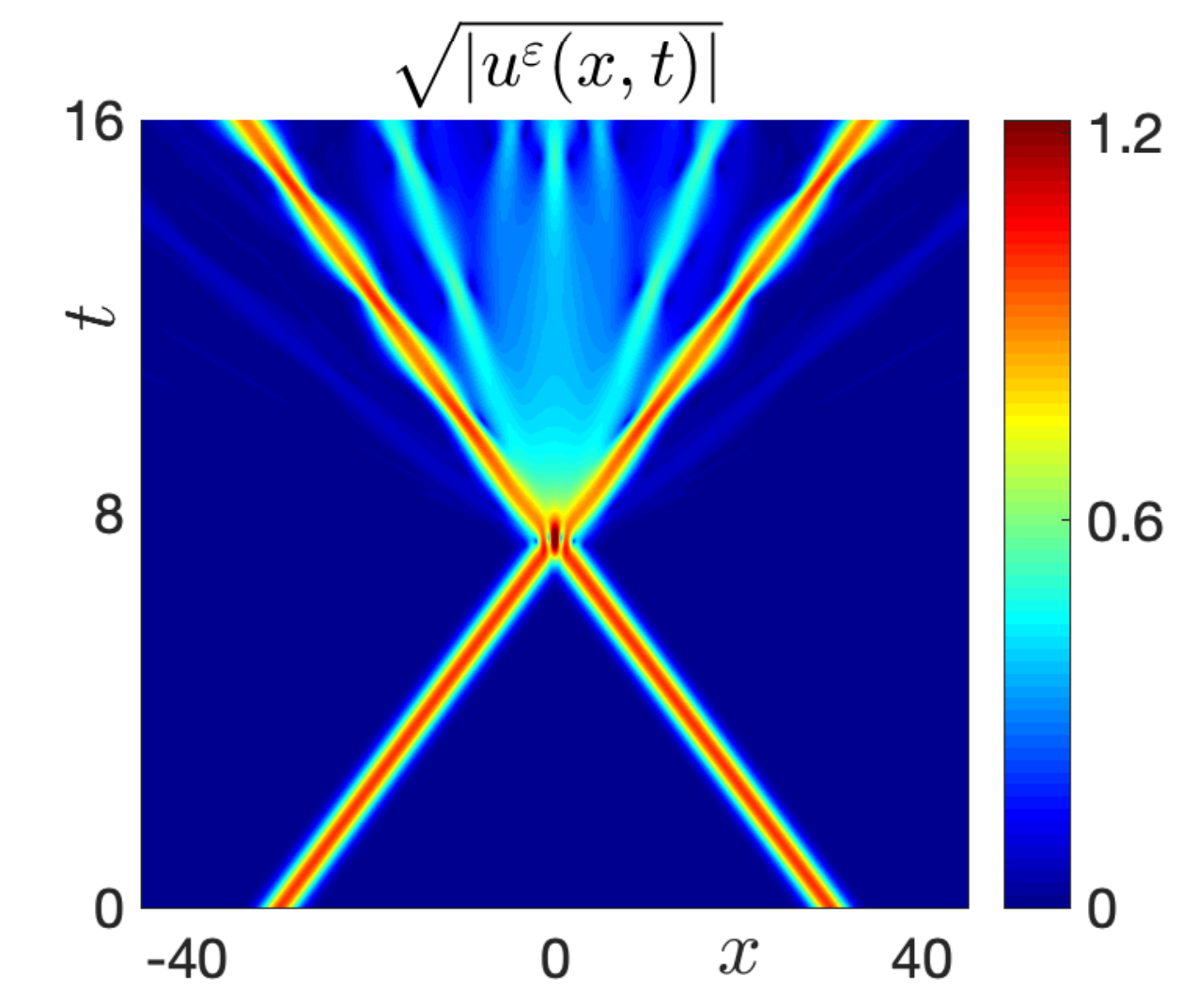}
\hspace{0.2cm}
\includegraphics[width=1.7in,height=1.55in]{./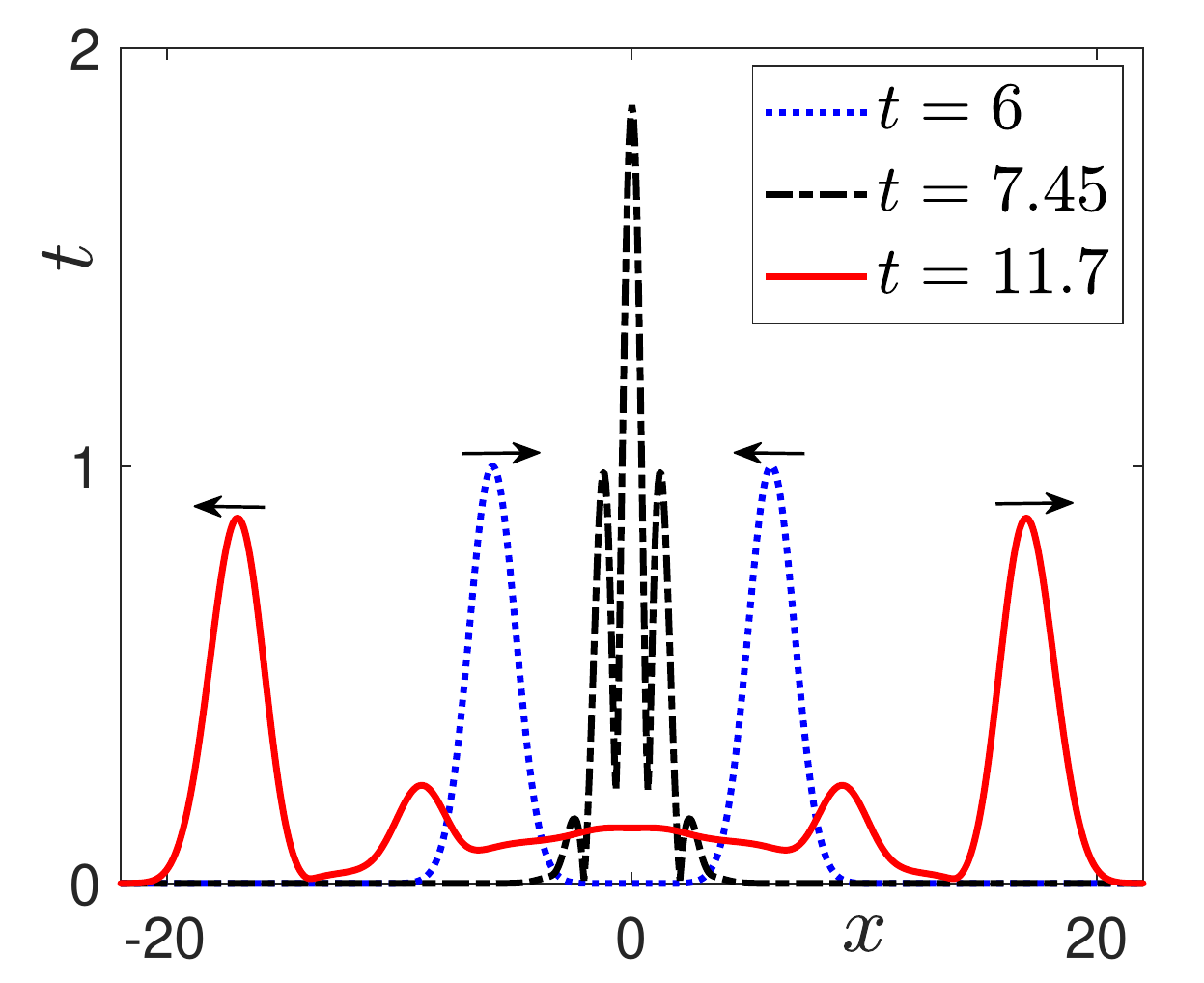}
\hspace{0.2cm}
\includegraphics[width=1.7in,height=1.55in]{./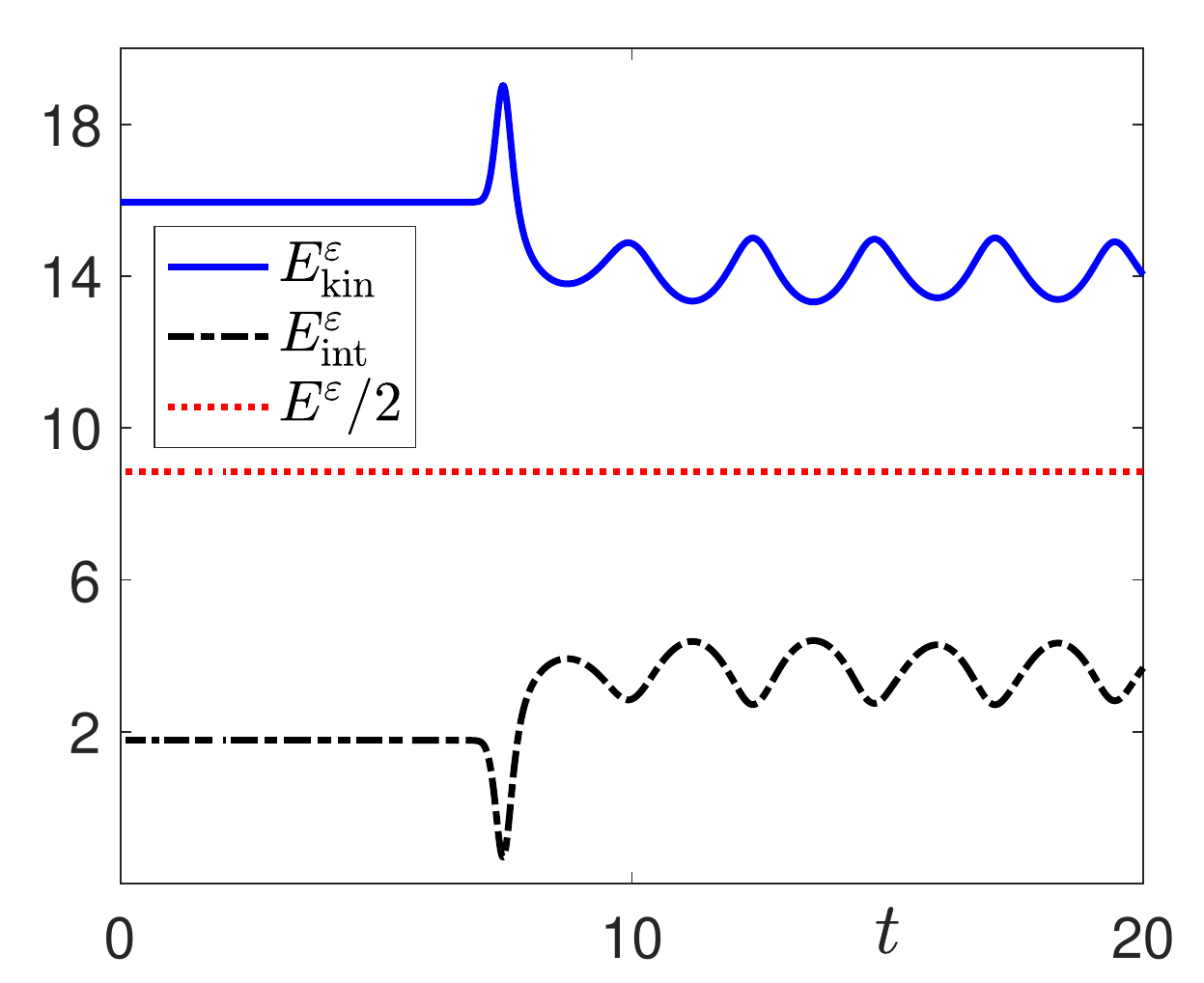}
\end{center}
\caption{Plots of $\sqrt{|u^\ep(x,t)|}$ (first column), $|u^\ep(x,t)|$ at different time (second column) and evolution of the energies (third column) for
 different parameters in {\bf Example 3}:  Case i--Case iv (from top to bottom).  }
\label{fig:ex3-casei-iv}
\end{figure}

\begin{figure}[htbp]
\begin{center}
\includegraphics[width=2.2in,height=1.7in]{./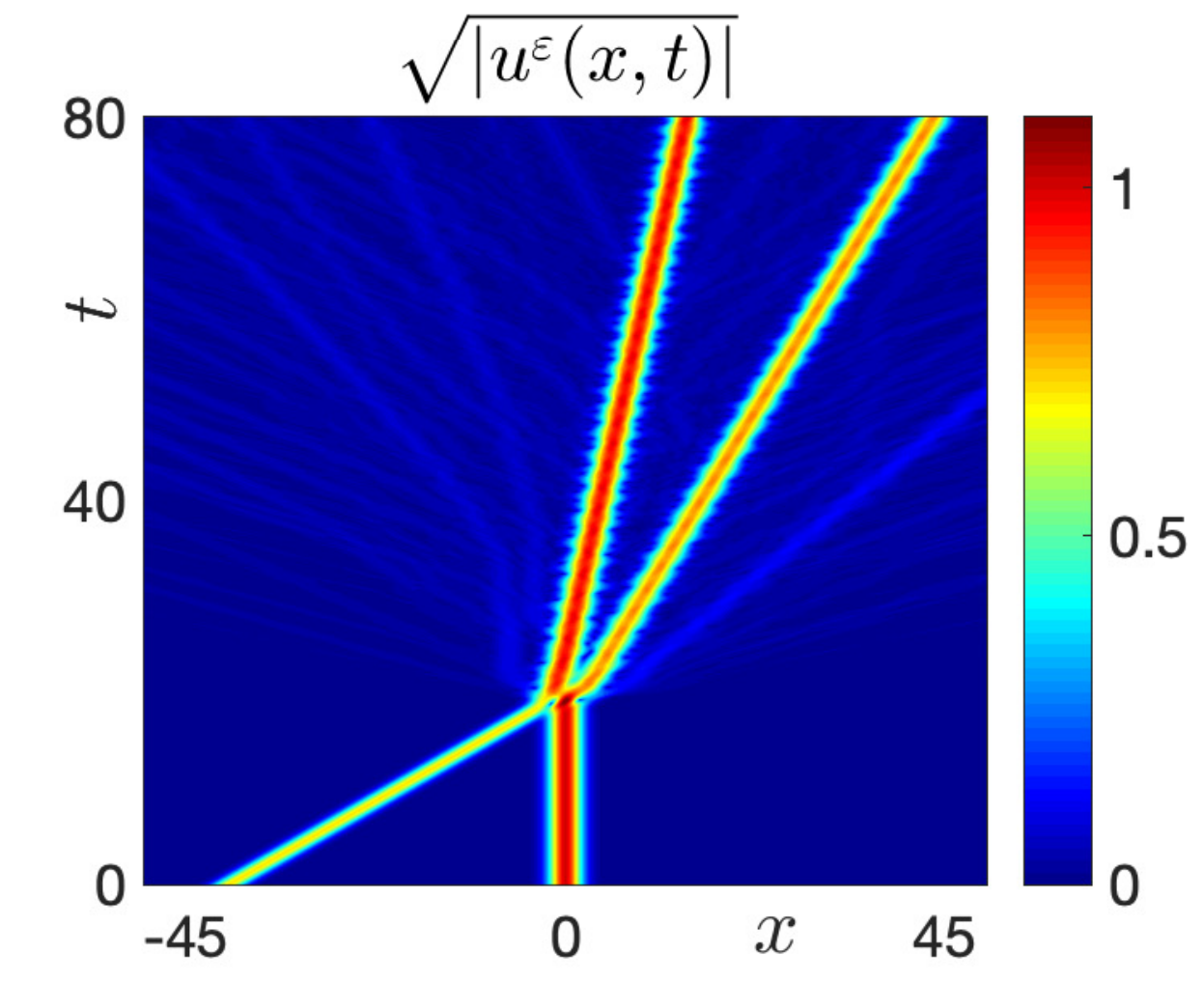}
\hspace{0.2cm}
\includegraphics[width=1.7in,height=1.55in]{./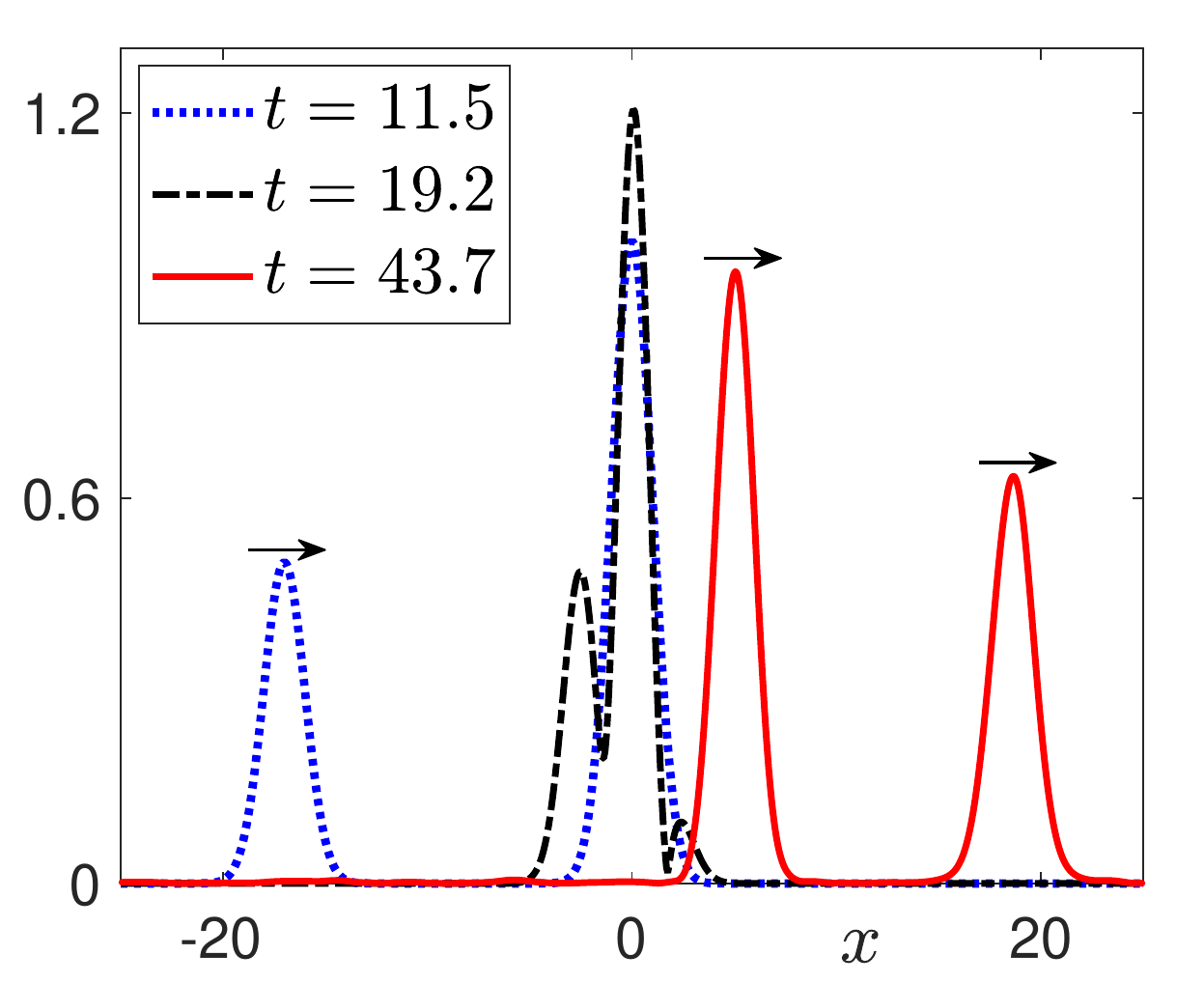}
\hspace{0.2cm}
\includegraphics[width=1.7in,height=1.55in]{./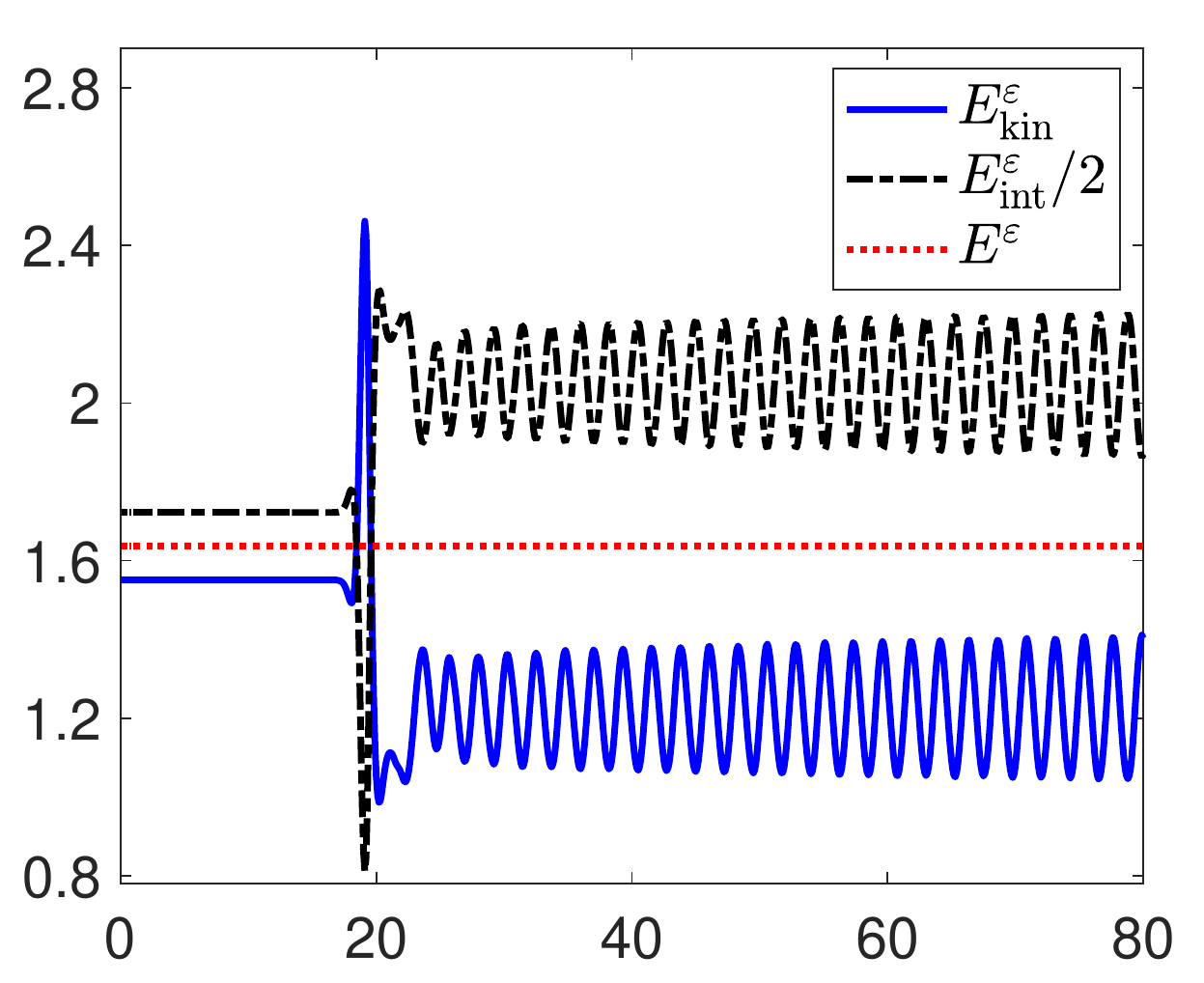}\\
\vspace{0.2cm}
\includegraphics[width=2.2in,height=1.7in]{./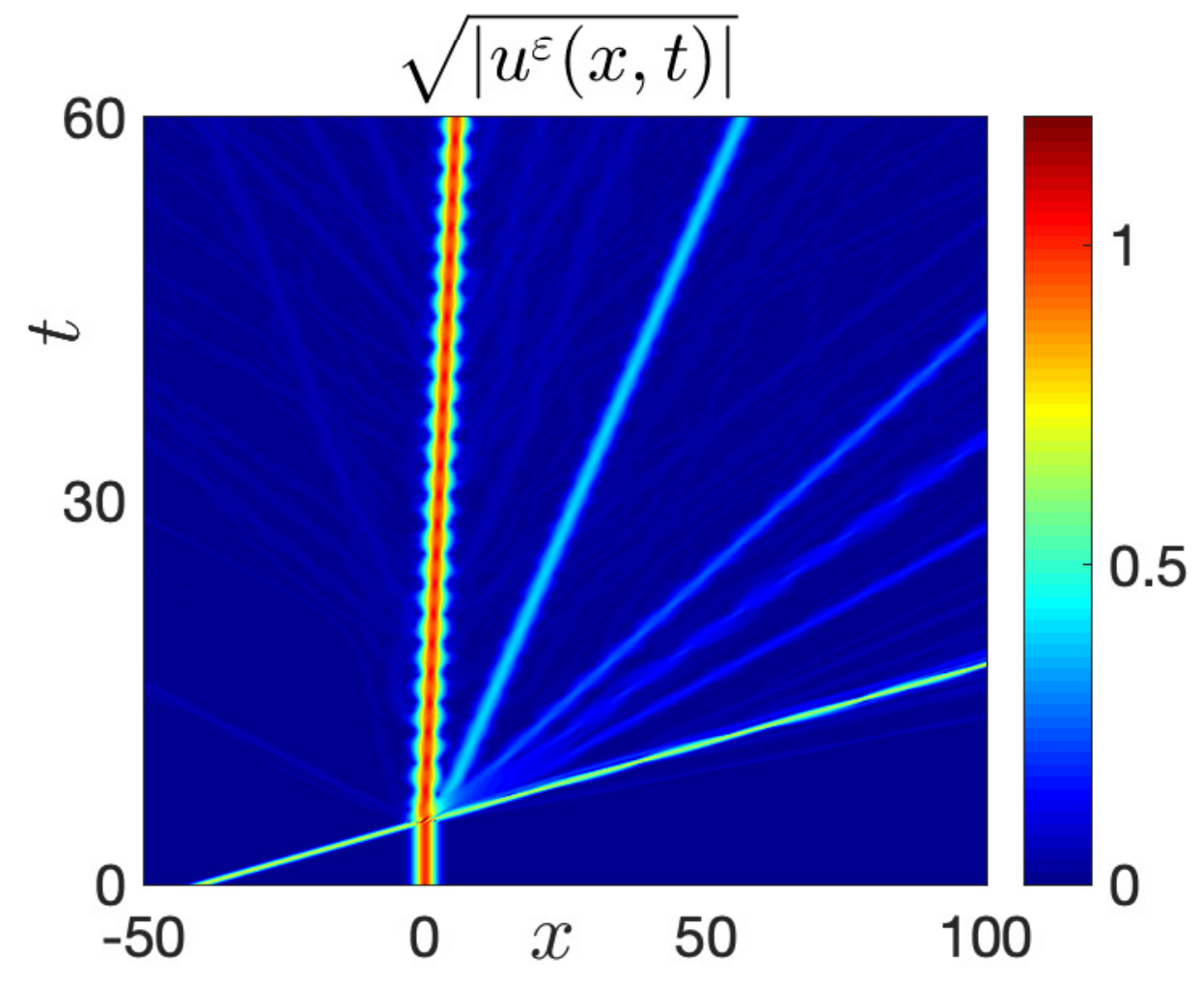}
\hspace{0.2cm}
\includegraphics[width=1.7in,height=1.55in]{./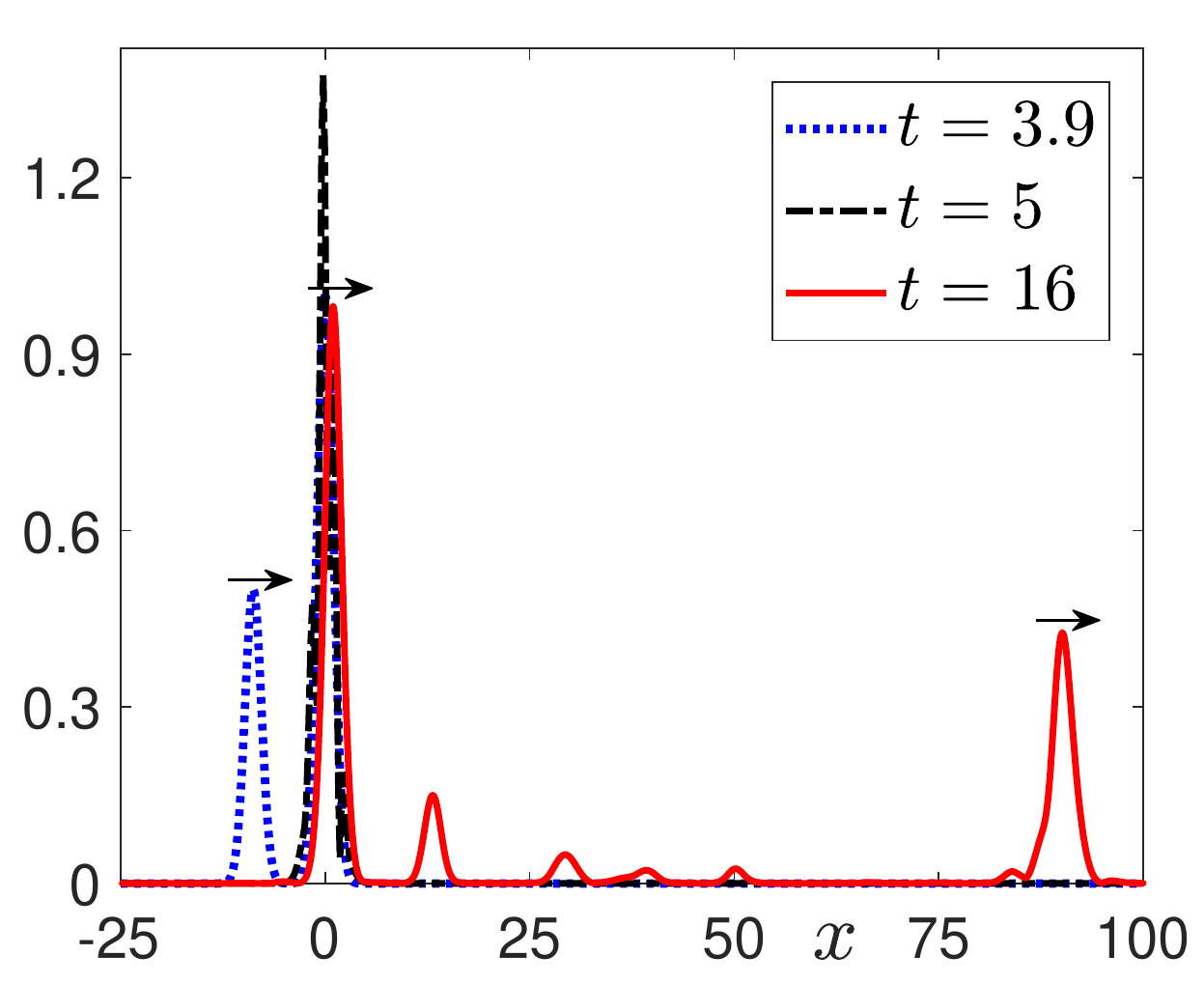}
\hspace{0.2cm}
\includegraphics[width=1.7in,height=1.55in]{./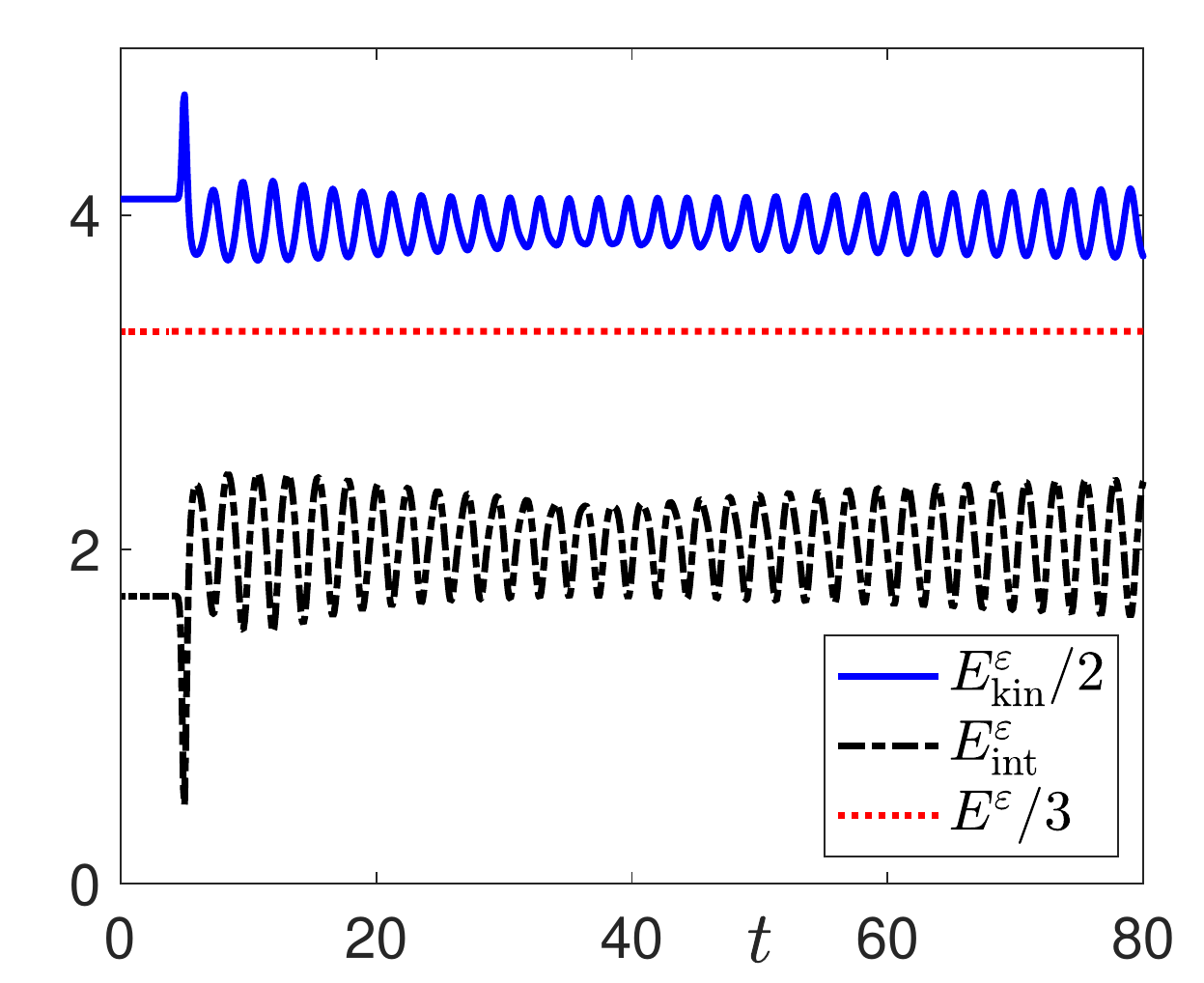}\\
\vspace{0.2cm}
\includegraphics[width=2.2in,height=1.7in]{./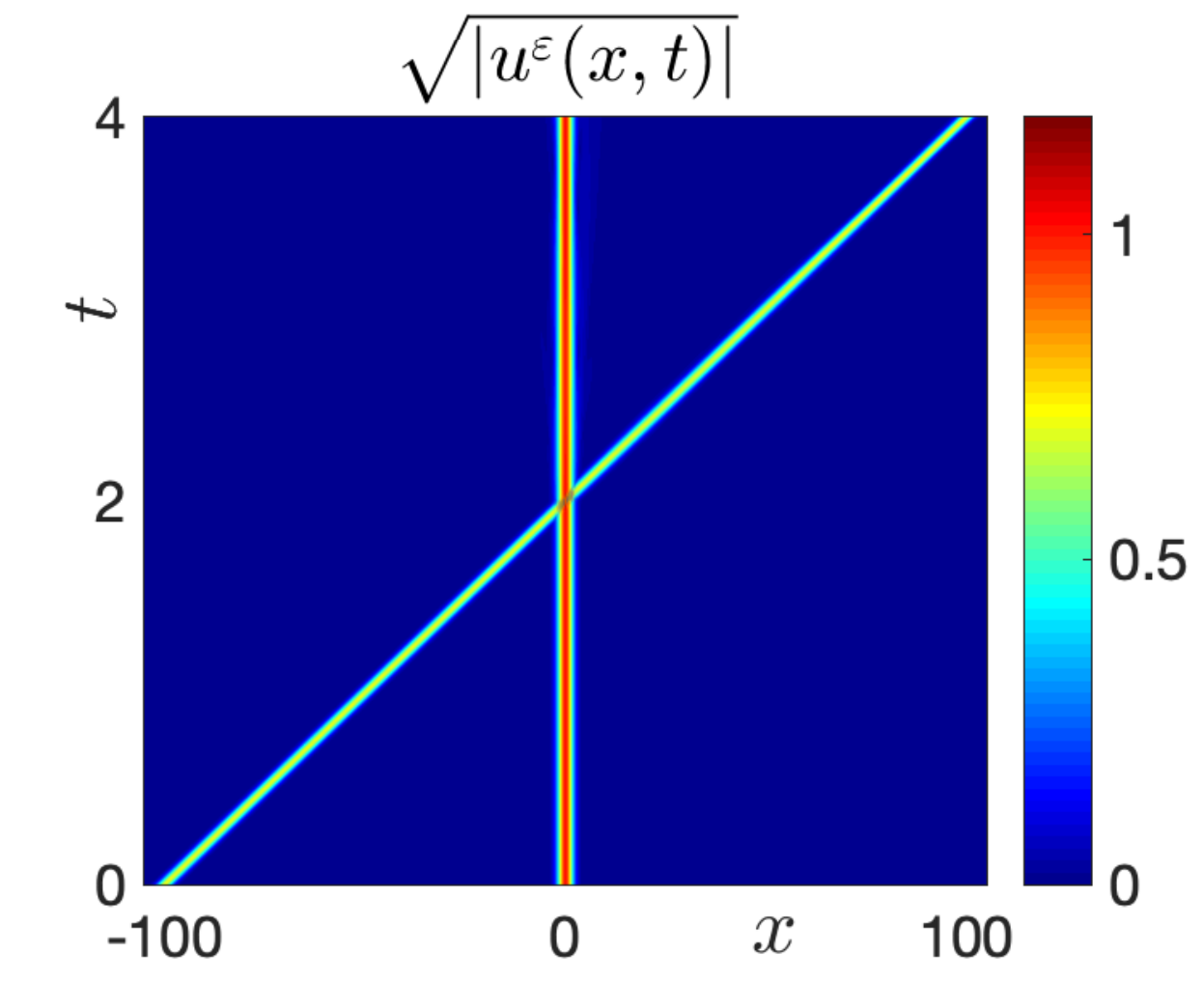}
\hspace{0.2cm}
\includegraphics[width=1.7in,height=1.55in]{./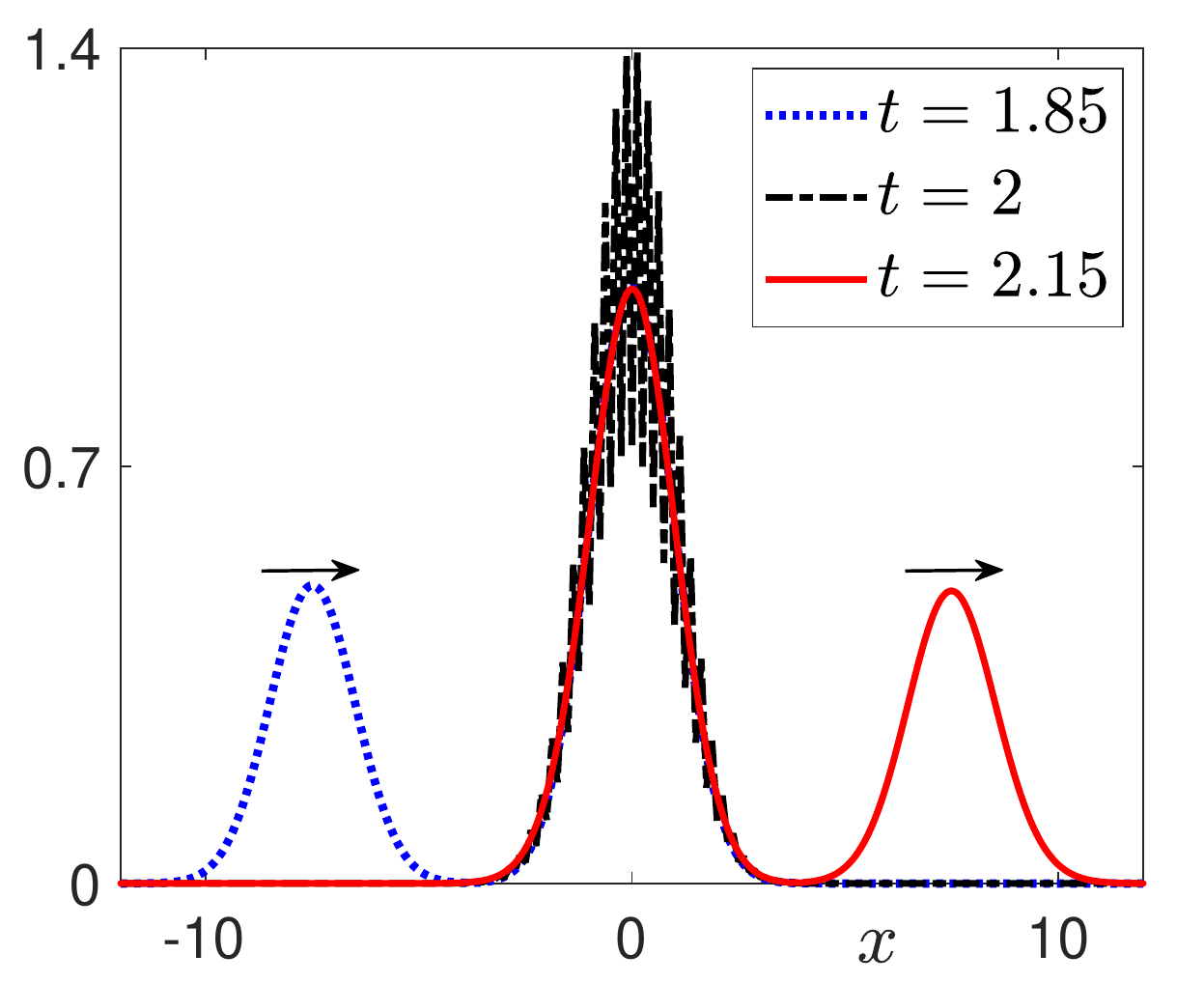}
\hspace{0.2cm}
\includegraphics[width=1.7in,height=1.55in]{./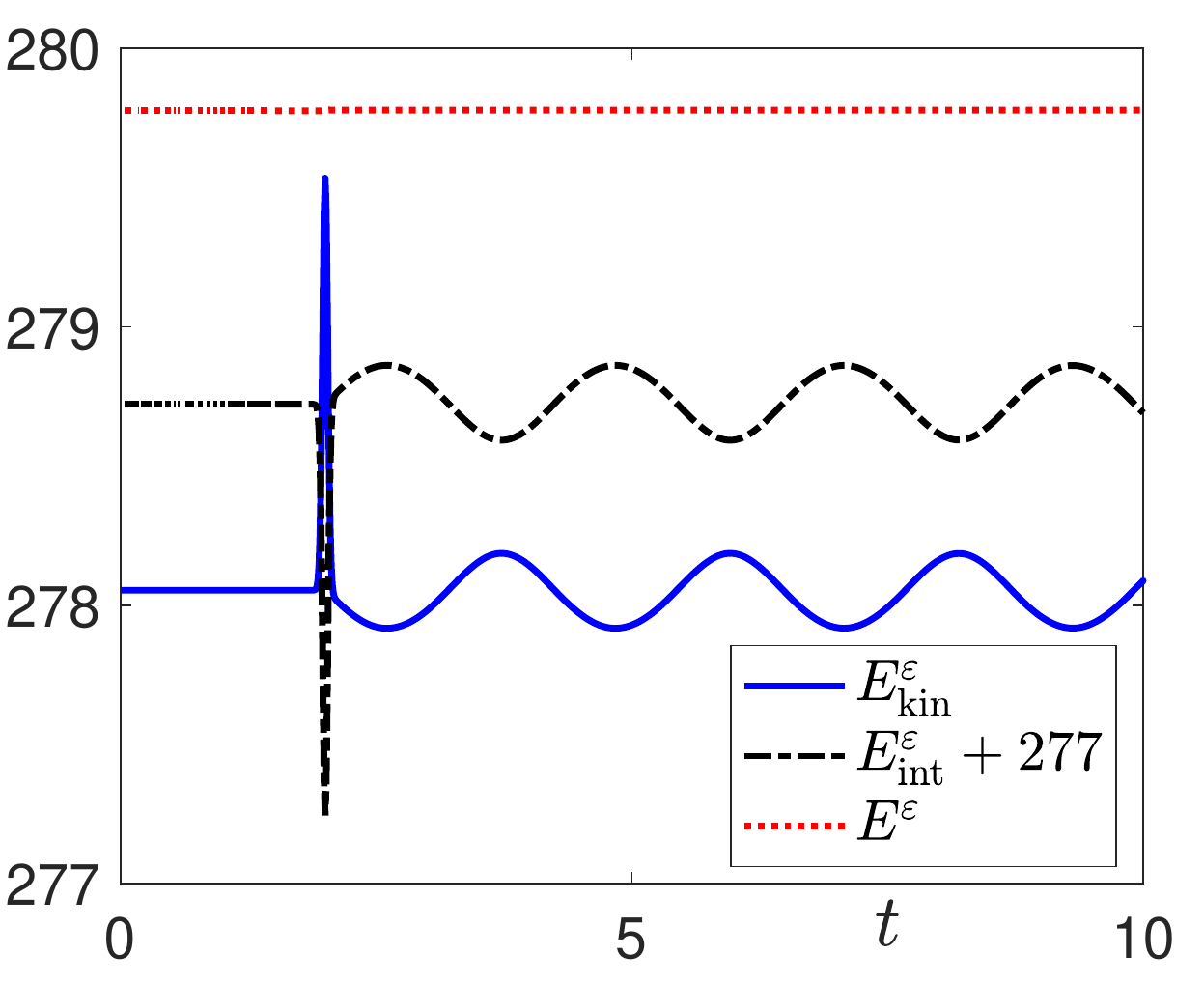}\\
\vspace{0.2cm}
\includegraphics[width=2.2in,height=1.7in]{./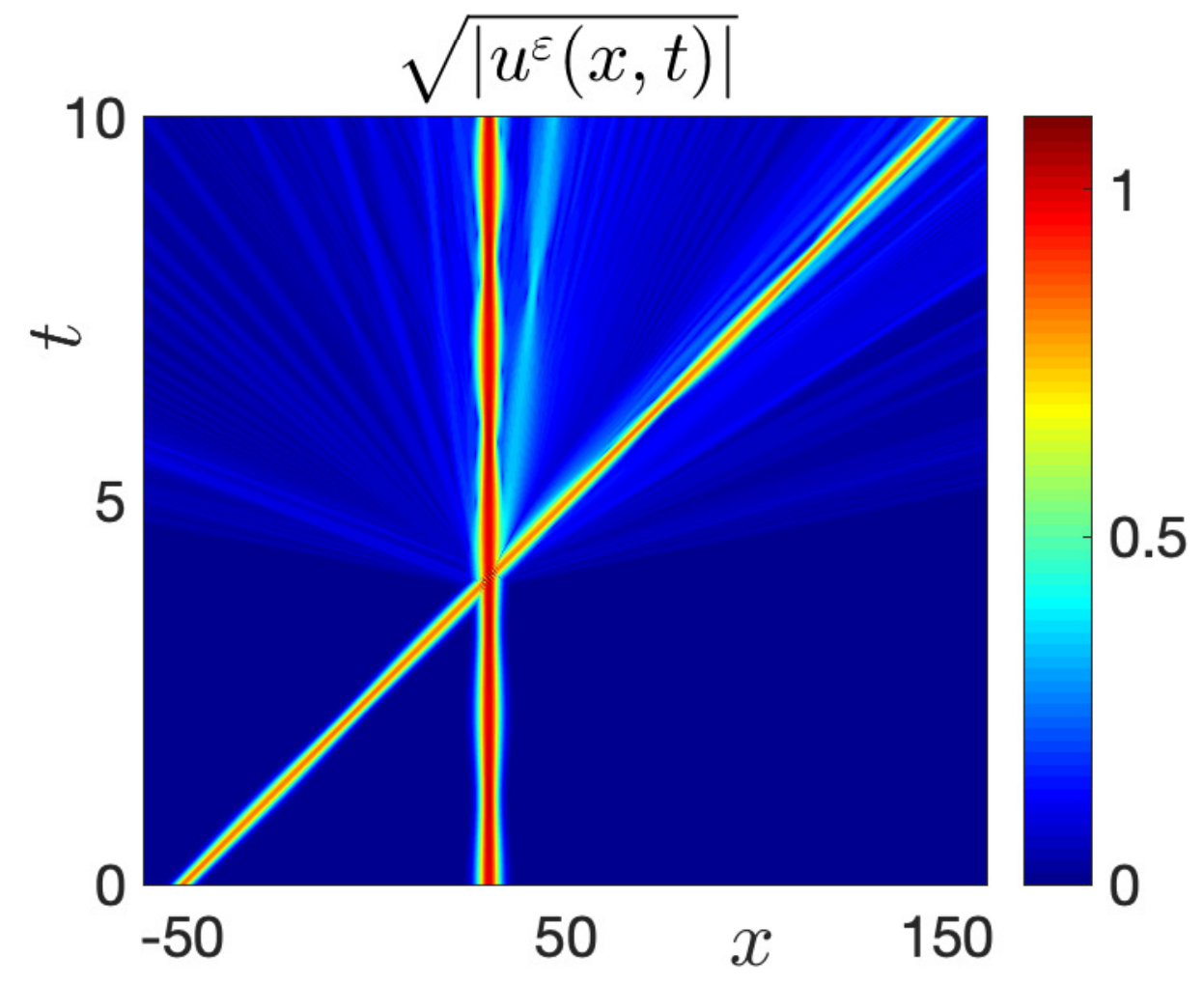}
\hspace{0.2cm}
\includegraphics[width=1.7in,height=1.55in]{./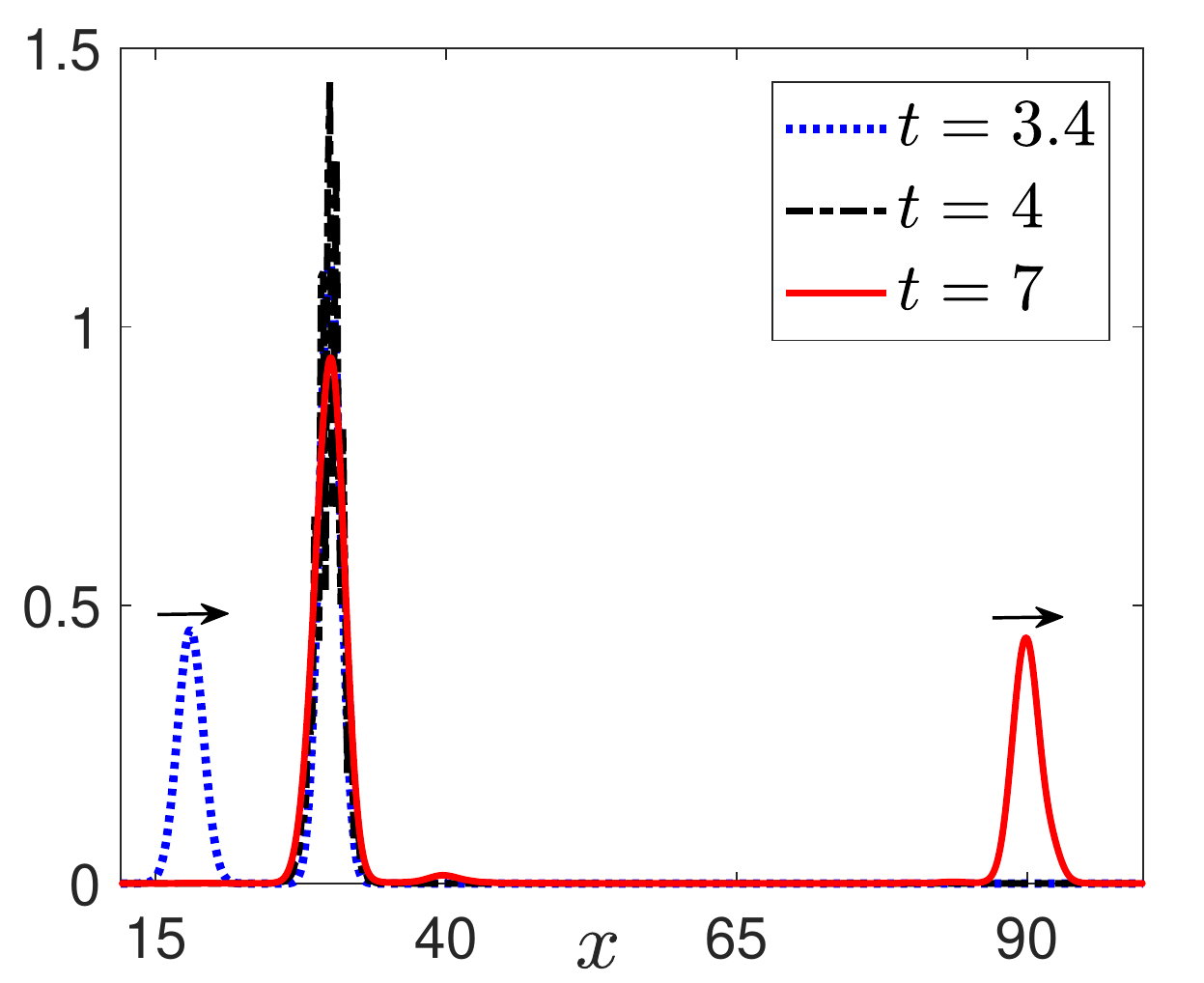}
\hspace{0.2cm}
\includegraphics[width=1.7in,height=1.55in]{./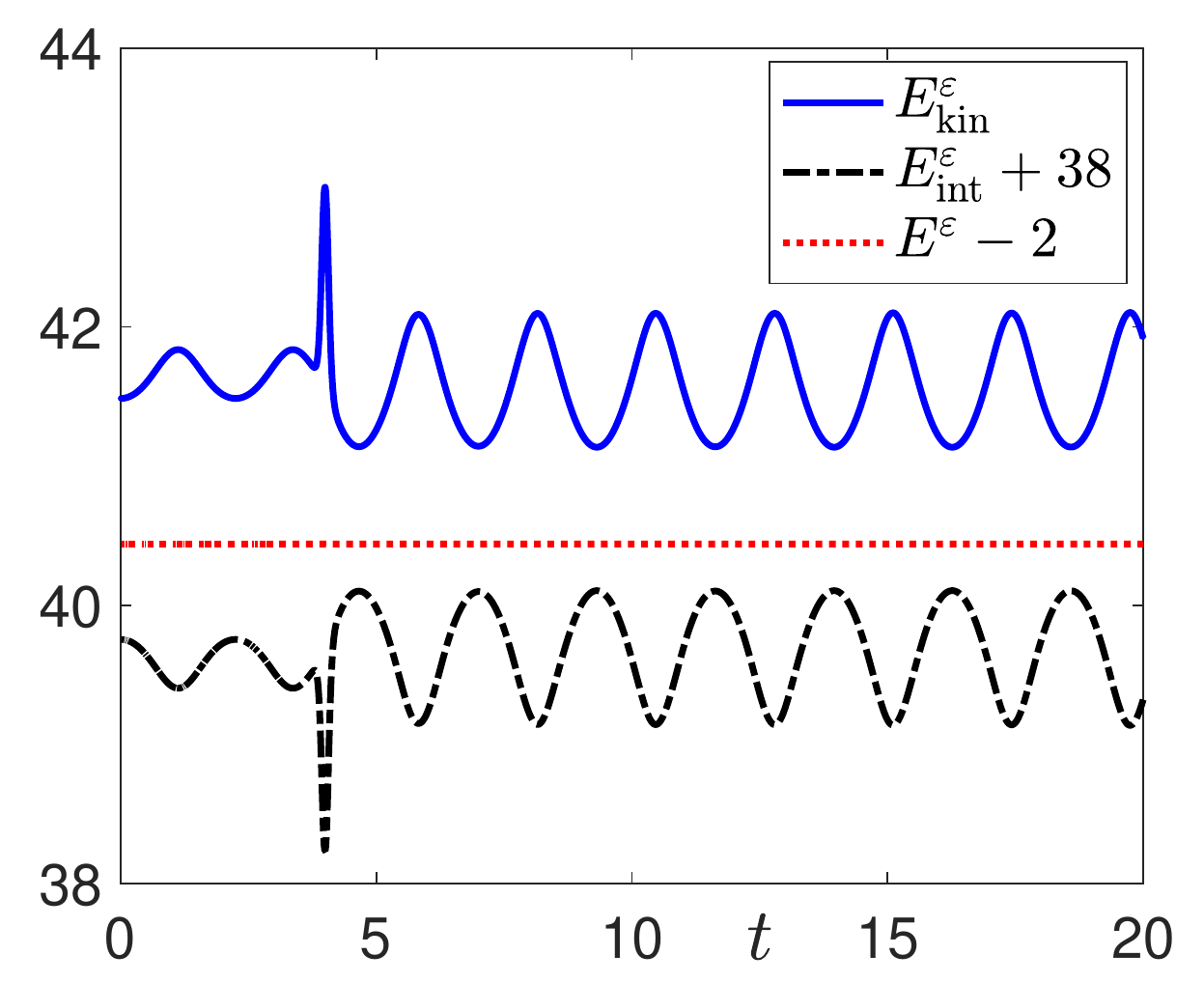}
\end{center}
\caption{Plots of $\sqrt{|u^\ep(x,t)|}$ (first column), $|u^\ep(x,t)|$ at different time (second column) and evolution of the energies (third column)
for different parameters in {\bf Example 3}:  Case v--Case viii (from top to bottom).  }
\label{fig:ex3-casev-viii}
\end{figure}

\bigskip

{\bf Example 4}. Here, we let $\lambda=1$ and $L=10000$. We consider the following three cases of parameters in \eqref{ini_set}:

\begin{itemize}
\item[](ix). $N=1$, $v_1=10$,   $x_1=10$,  $b_1=1$,  $a_1=1$;
\item[](x).  $N=2$, $v_1=10$, $v_2=0$,    $x_1=-100$, $x_2=0$,  $b_1=2$, $b_2=1$,  $a_1=a_2=1$;
\item[](xi). $N=2$, $v_1=20$, $v_2=0$,    $x_1=-100$, $x_2=0$,  $b_1=2$, $b_2=1$,  $a_1=a_2=1$.
\end{itemize}

\noindent Fig. \ref{fig:ex4-caseix-xi} illustrates the evolution of $\sqrt{|u^\ep(x,t)|}$, $E^\varepsilon_{\rm kin}$,
 $E^\varepsilon_{\rm int}$ and $E^\varepsilon$ as well as the plot of $|u^\ep(x,t)|$ at different time for
 Cases ix-xi.  We would conclude  from these figures and other numerical experiments not shown here for
brevity that:
(1) The total energy is conserved well.
(2) Unlike the case of $\lambda<0$ where the Gaussons behave like solitary waves,
the Gaussians in the case  $\lambda>0$  move and spread  out (cf. Fig. \ref{fig:ex4-caseix-xi}).
In fact, for a single Gaussian, the analytical solution $u(x,t)$ is given in \eqref{Gaus}. Fig. \ref{fig:ex4-caseix-error} shows the errors of $e(t):=u^\varepsilon(\cdot,t)-u(\cdot,t)$ measured in different norms, which again evidence  the accuracy of the STSP scheme.
In addition, the rate of dispersion of the Gaussians could indeed be estimated for large time dynamics in \cite{CaGa-p}.
(3) The dynamics and interaction of two moving Gaussians depend on the
relative velocity.  They will be separated completely and no solitary
waves are emitted if the relative velocity is large enough
(cf. Fig. \ref{fig:ex4-caseix-xi} Case xi). While if  the relative
velocity is not large enough, i.e., when they move more slowly than
the  speed they spread out, the Gaussians will be  partially twisted
together. Oscillation is  created and always there
(cf. Fig. \ref{fig:ex4-caseix-xi} Case x). This is consistent with the
fact that the convergence to a universal Gaussian profile (leaving out
the oscillatory aspects, which are not described in general) is very
slow, as established in \cite{CaGa-p} (logarithmic
convergence in time).

\begin{figure}[htbp]
\begin{center}
\includegraphics[width=2.2in,height=1.7in]{./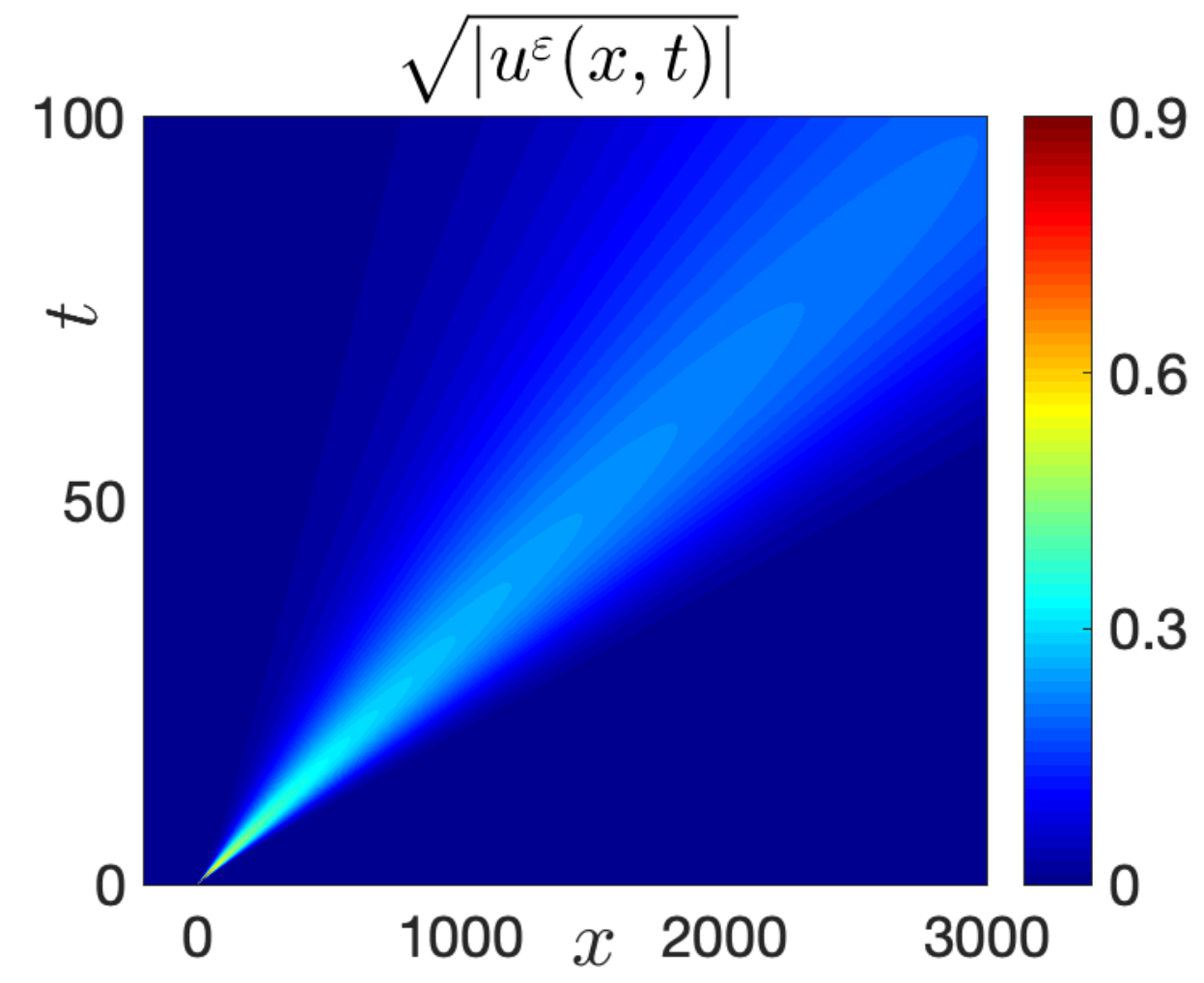}
\hspace{0.2cm}
\includegraphics[width=1.7in,height=1.55in]{./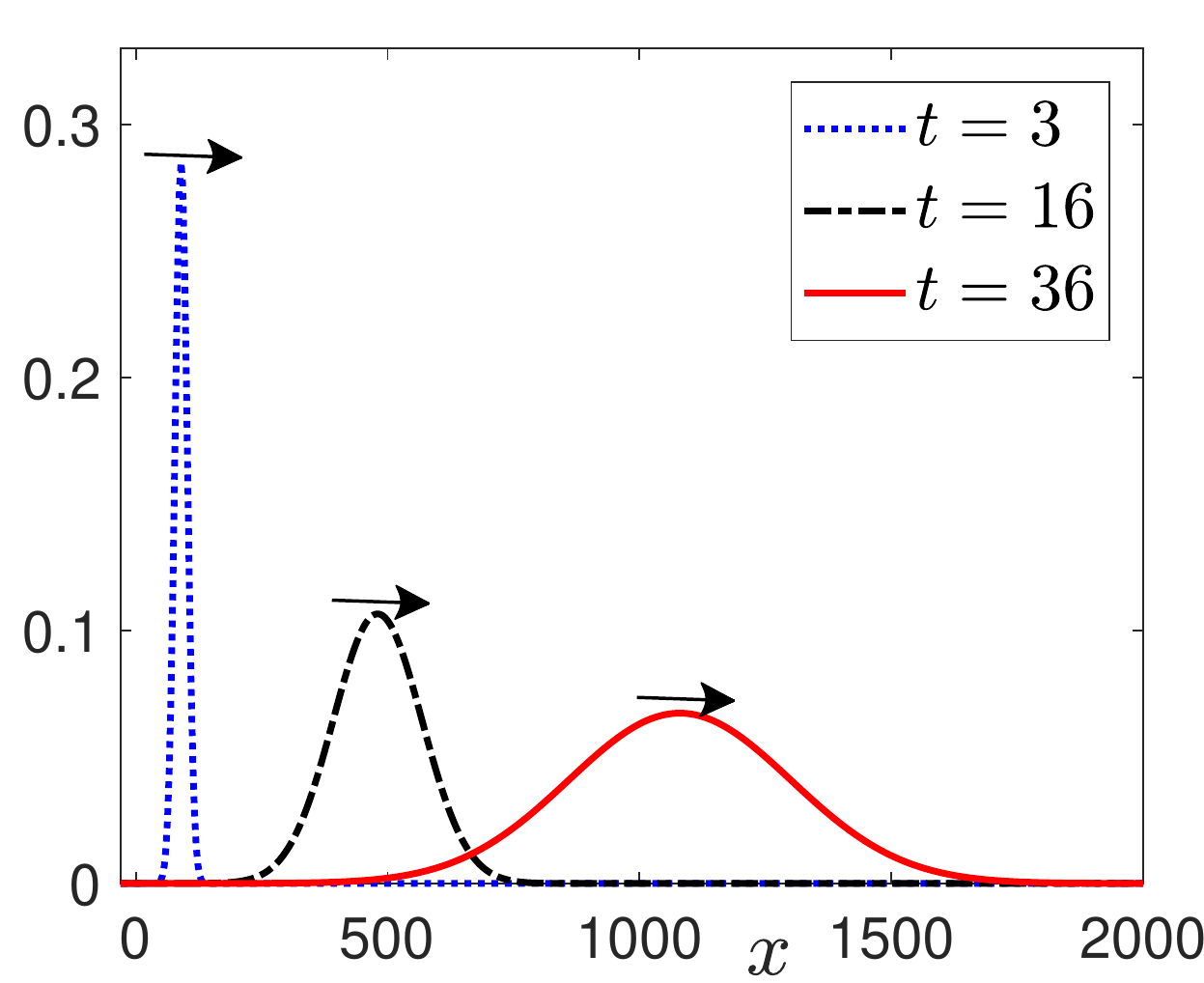}
\hspace{0.2cm}
\includegraphics[width=1.7in,height=1.55in]{./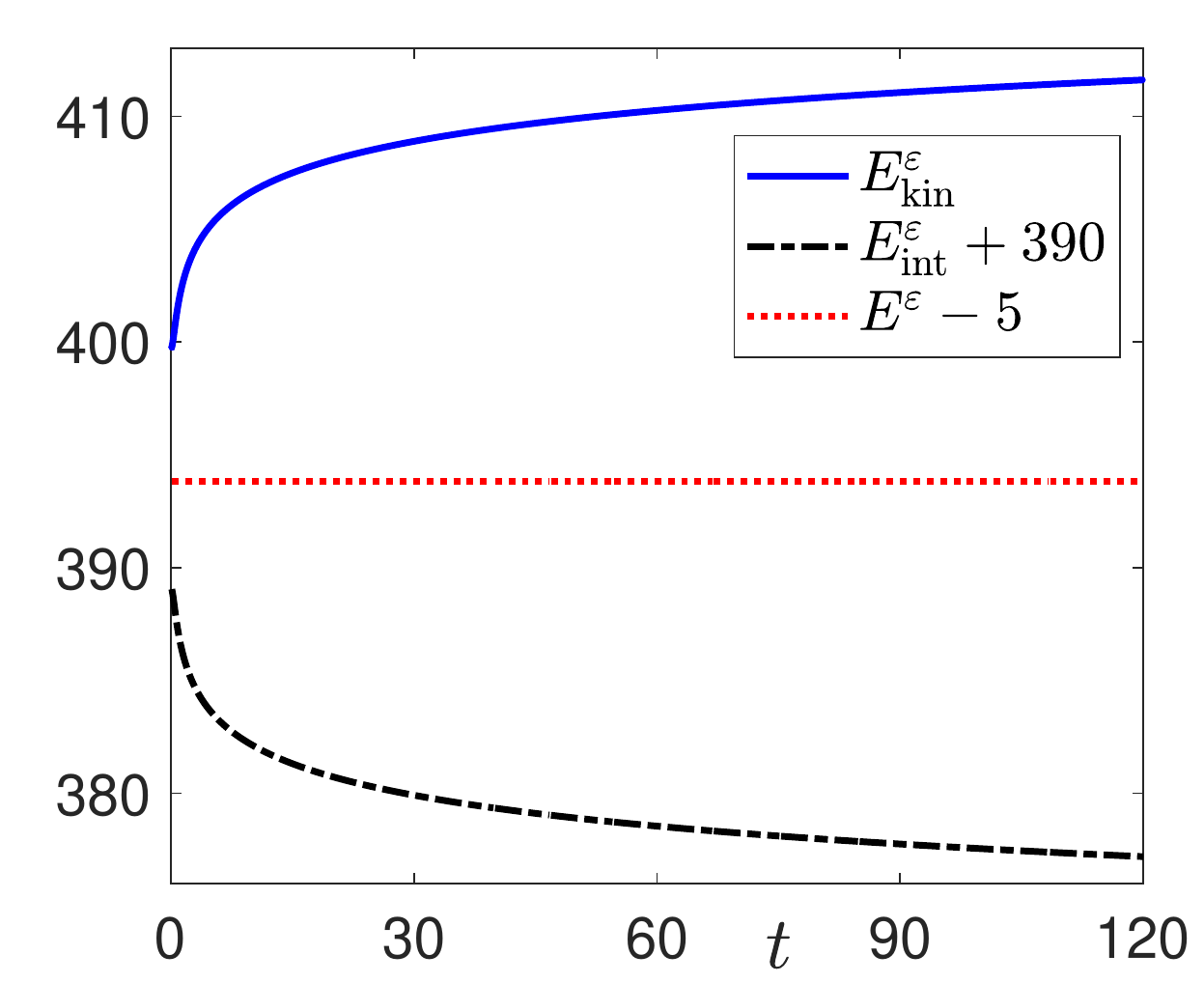}\\
\vspace{0.2cm}
\includegraphics[width=2.2in,height=1.7in]{./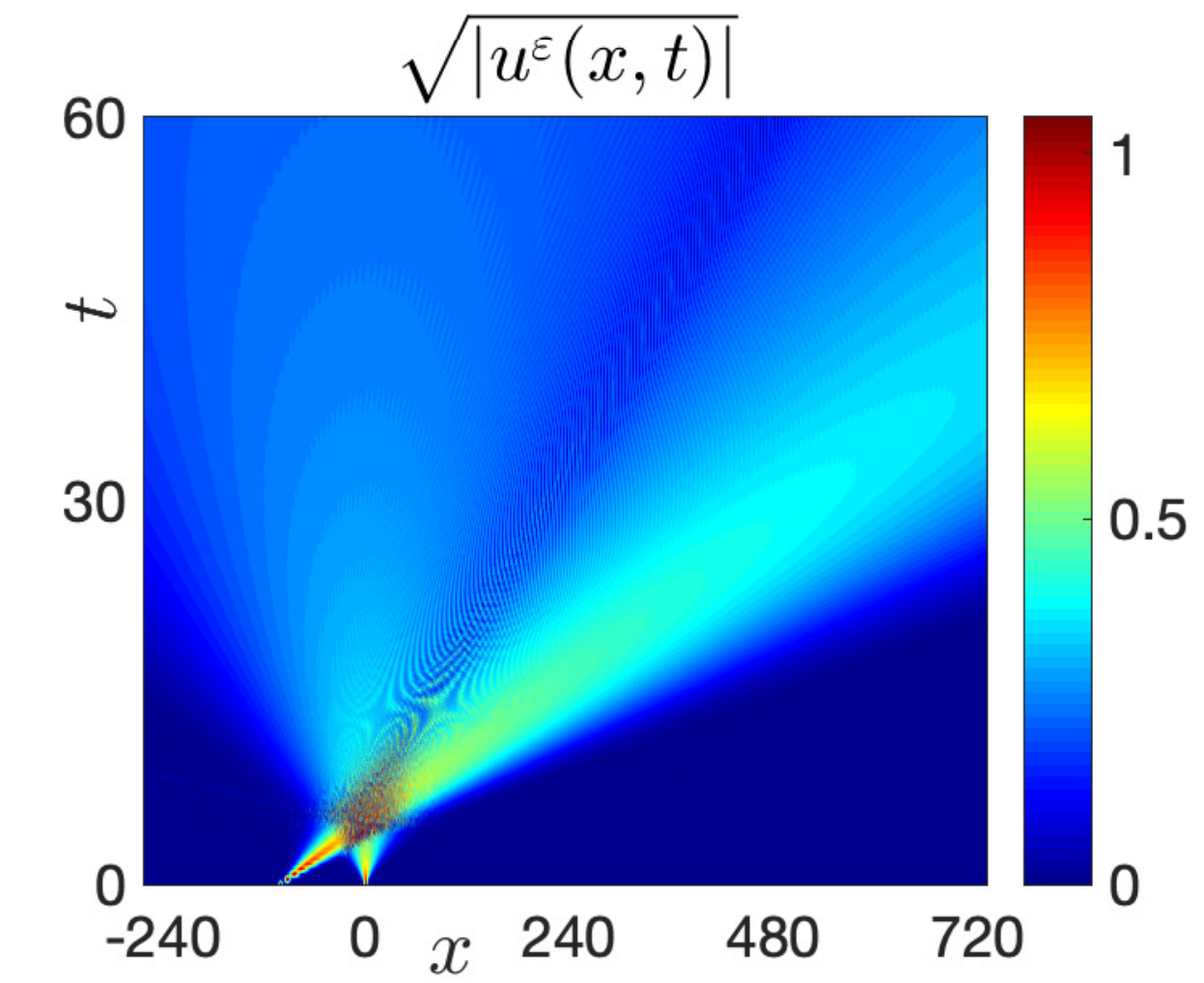}
\hspace{0.2cm}
\includegraphics[width=1.7in,height=1.55in]{./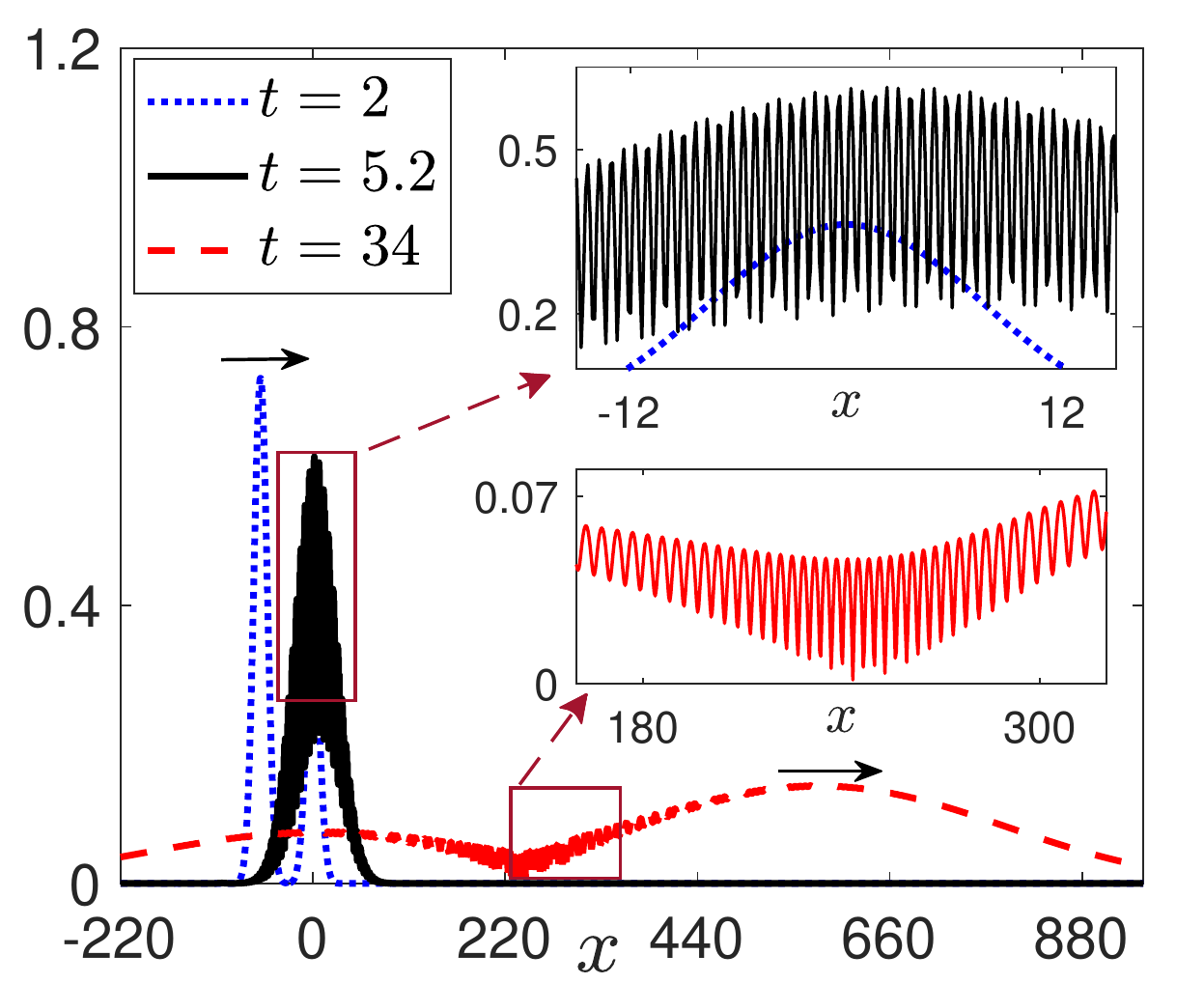}
\hspace{0.2cm}
\includegraphics[width=1.7in,height=1.55in]{./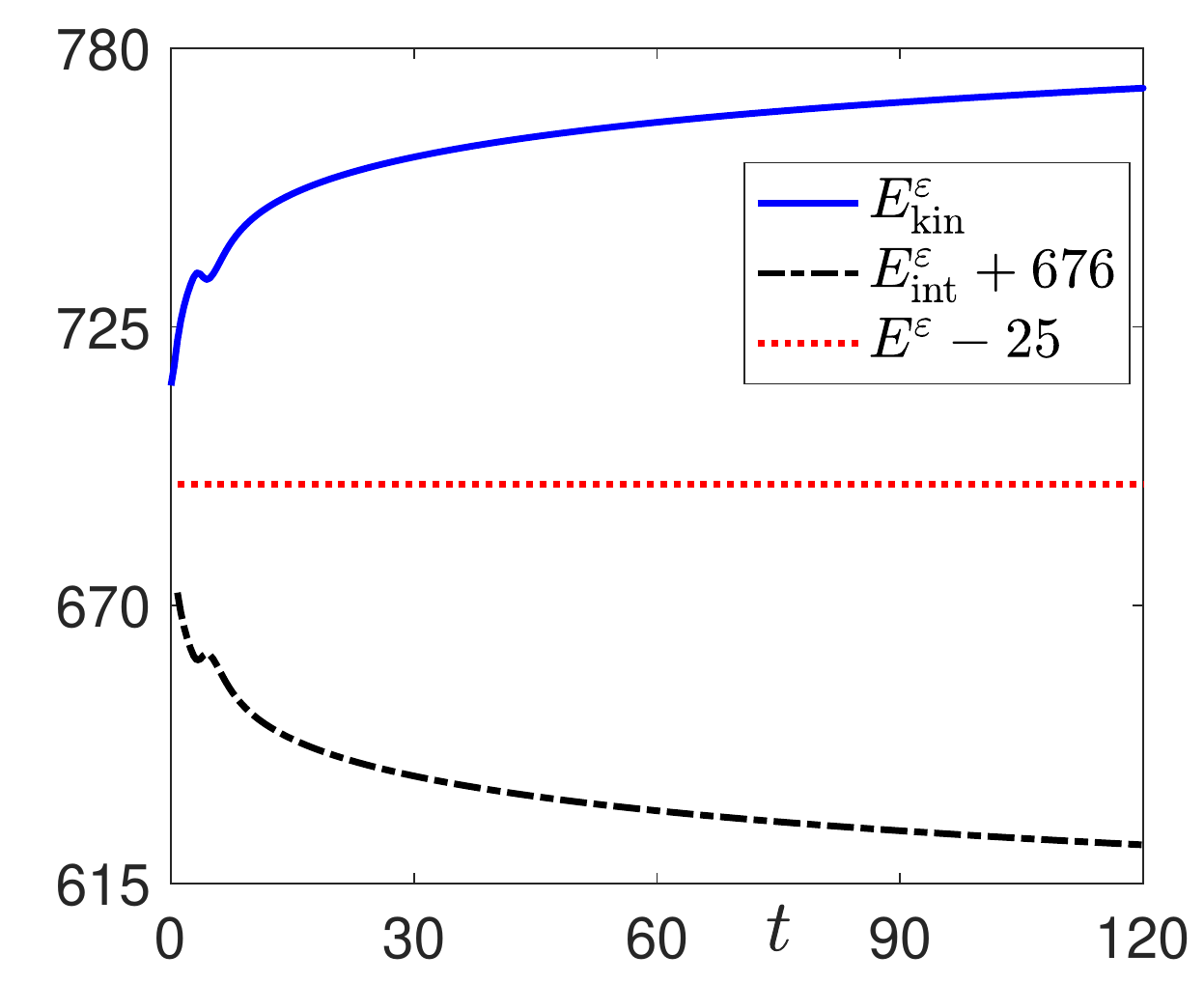}\\
\vspace{0.2cm}
\includegraphics[width=2.2in,height=1.7in]{./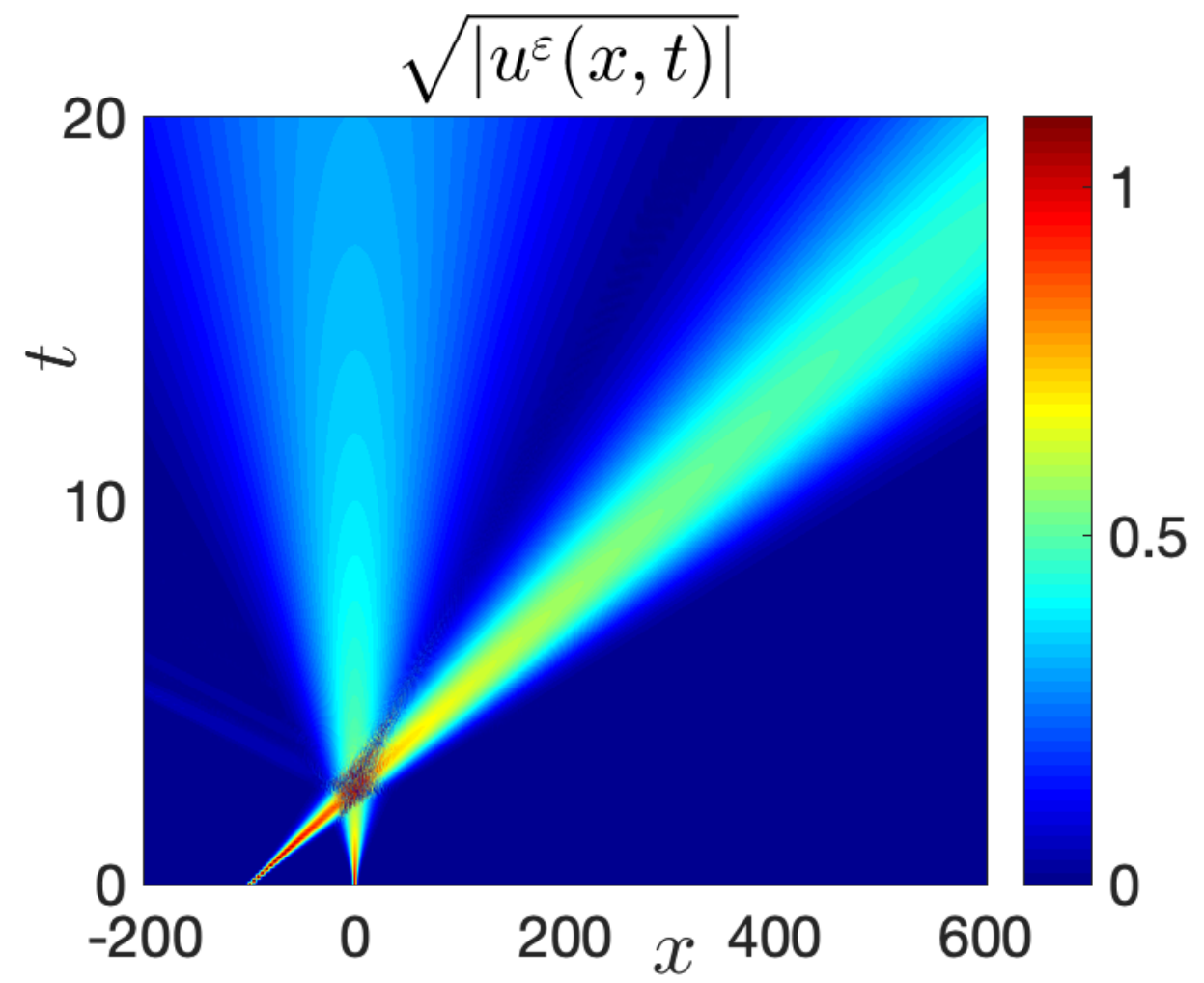}
\hspace{0.2cm}
\includegraphics[width=1.7in,height=1.55in]{./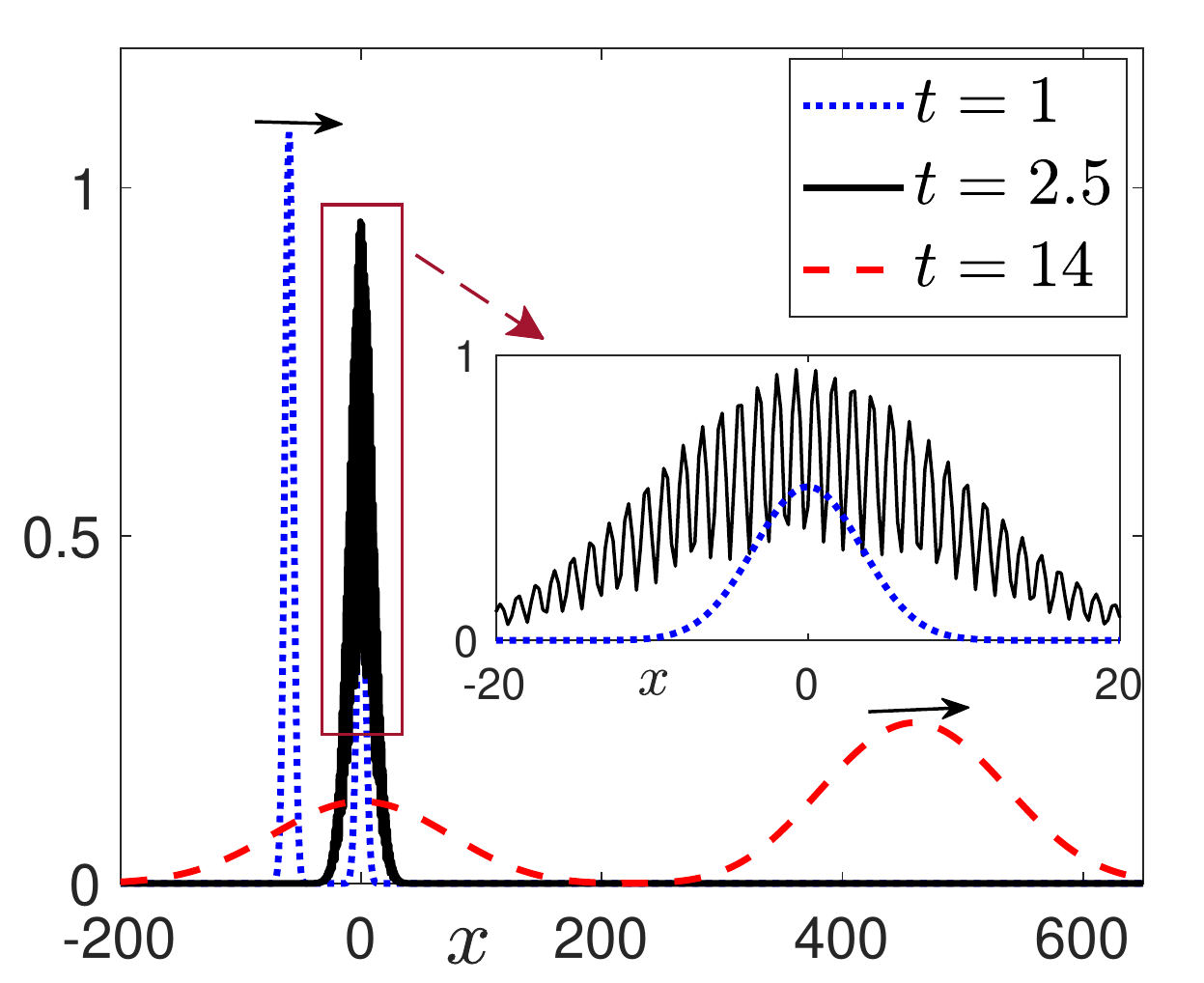}
\hspace{0.2cm}
\includegraphics[width=1.7in,height=1.55in]{./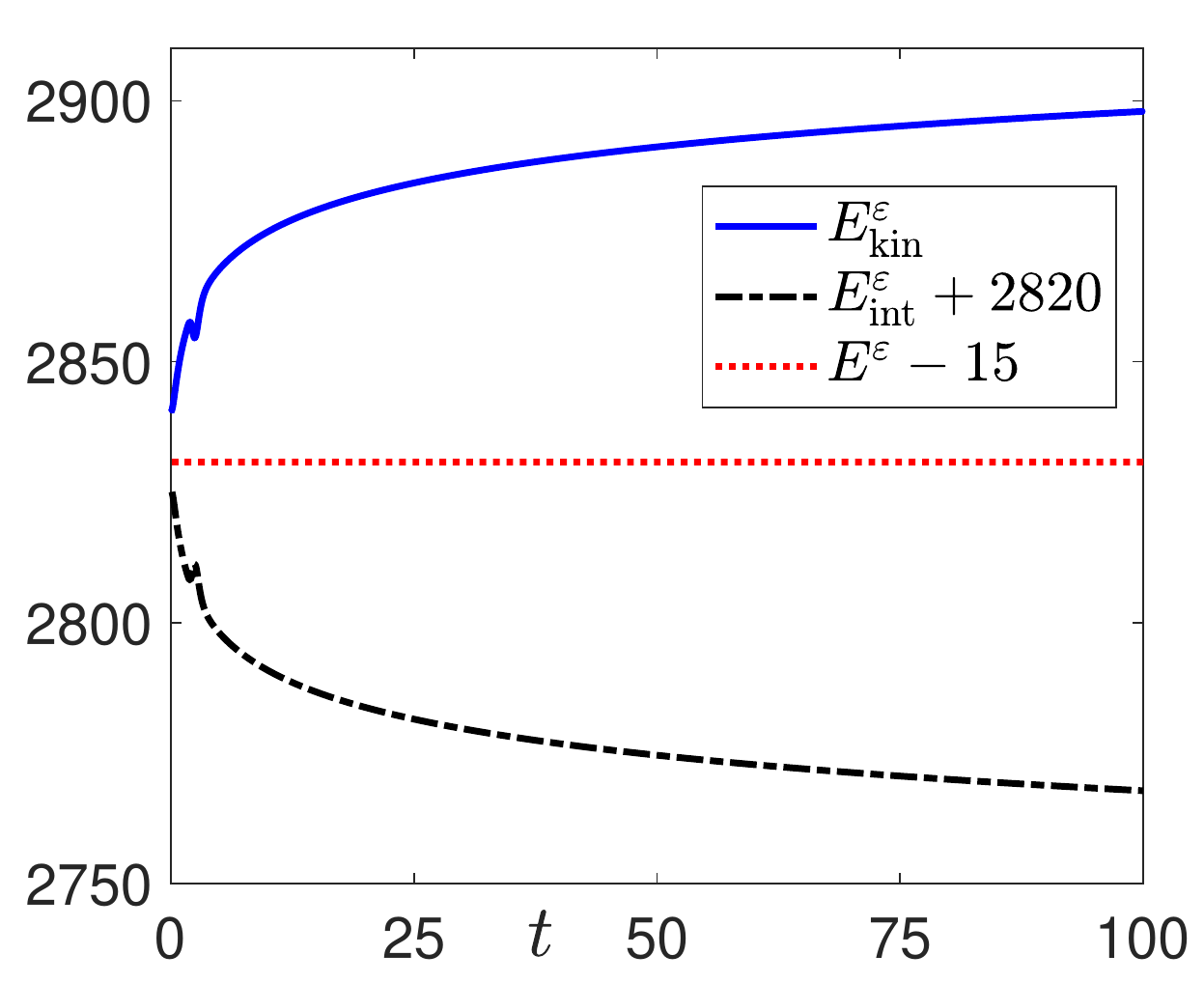}
\end{center}
\caption{Plots of $\sqrt{|u^\ep(x,t)|}$ (first column), $|u^\ep(x,t)|$ at different time (second column) and evolution of the energies (third column) for
different parameters in {\bf Example 4}:  Cases ix--xi (from top to bottom).  }
\label{fig:ex4-caseix-xi}
\end{figure}

\begin{figure}[htbp]
\begin{center}
\includegraphics[width=4in,height=2.5in]{./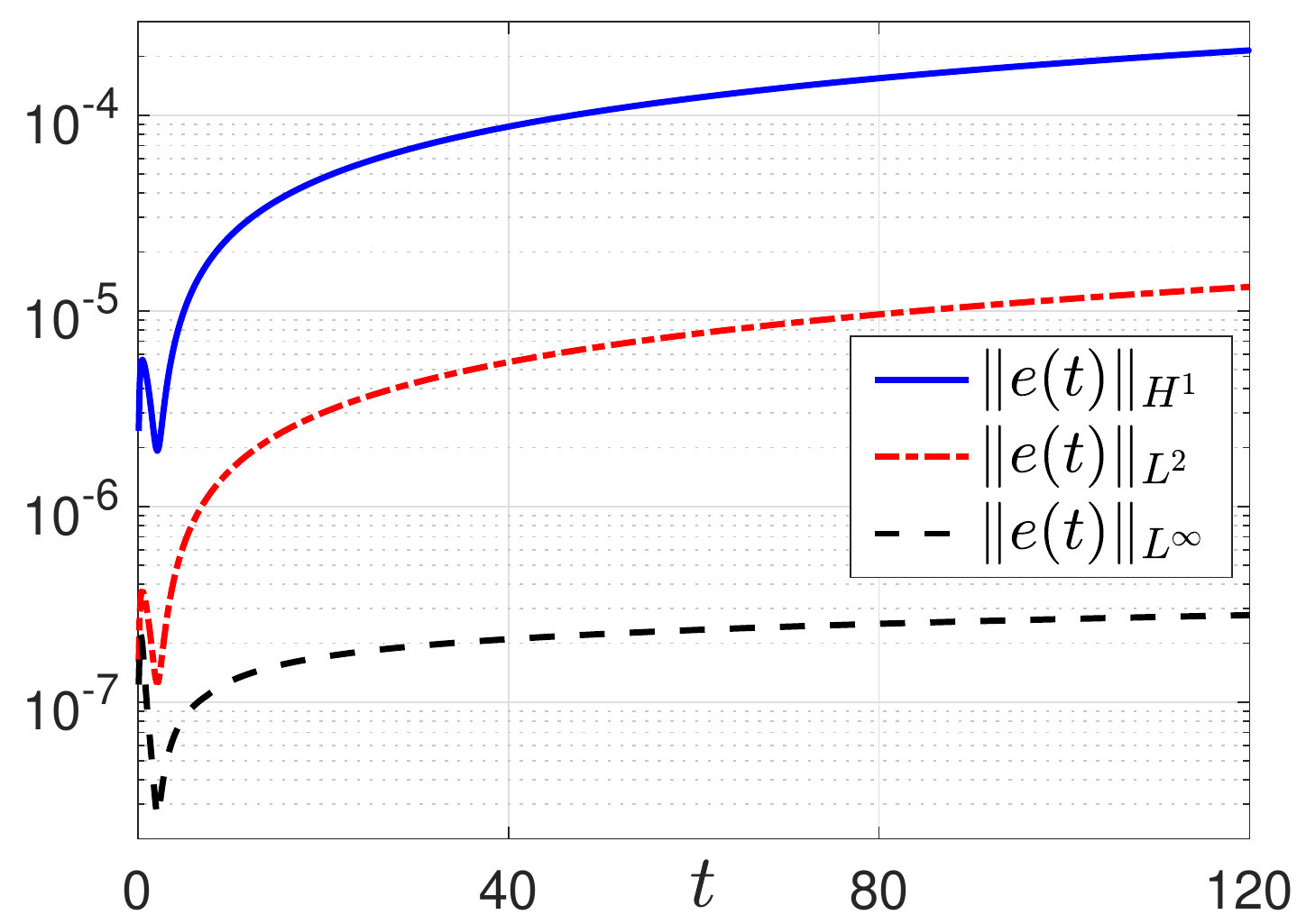}
\end{center}
\caption{Evolution of errors $\|e^\varepsilon(t)\|_{H^2}$, $\|e^\varepsilon(t)\|_{L^2}$  and
$\|e^\varepsilon(t)\|_{L^\infty}$  for Case (ix) in Example 4. }
\label{fig:ex4-caseix-error}
\end{figure}

\section{Conclusion}
We proposed and analyzed the regularized splitting methods and a regularized conservative finite difference method (CNFD) to solve the logarithmic Schr\"odinger equation (LogSE).
 For less regular initial setups, reduction of the standard order of accuracy for these methods in temporal direction is proved  theoretically
for the regularized Lie-Trotter splitting scheme, while also numerically observed for the regularized Strang splitting and CNFD schemes.
The method combining the regularized Strang-splitting scheme in time and spectral discretization in space is then applied to investigate
the long time dynamics of Gaussians for both positive and negative $\lambda$. It turns out that  the interaction of Gaussons in the LogSE is quantitatively  similar as the interaction of bright solitons in the cubic nonlinear Schr\"odinger equation.  However, there are also some qualitatively  different phenomena such as the breather-like dynamics and the spreading-out behavior when $\lambda<0$ and when $\lambda>0$ in the LogSE, respectively.
Our numerical results demonstrate rich and complicated dynamical phenomena
in the LogSE.

\bibliography{biblio}{}
\bibliographystyle{siam}
\end{document}